\documentclass{article}

\usepackage[T1]{fontenc}

\usepackage{xcolor}

\usepackage{amsmath, amssymb, amsfonts, amsthm, mathtools, mathrsfs, stmaryrd, dsfont}

\usepackage{graphicx}
\usepackage{tikz}
\usetikzlibrary{calc}
\usepackage{pgfplots}
\pgfplotsset{compat=1.18}

\usepackage{booktabs}
\usepackage{tabularx}
\usepackage{caption}
\usepackage{algorithmic}
\usepackage{algorithm}

\usepackage{enumitem}
\usepackage{changepage}
\usepackage{comment}

\usepackage{hyperref}
\usepackage{cleveref}

\usepackage{microtype}
\usepackage{csquotes}

\newtheorem{theorem}{Theorem}
\newtheorem{proposition}{Proposition}
\newtheorem{lemma}{Lemma}
\newtheorem{corollary}{Corollary}
\newtheorem{definition}{Definition}
\newtheorem*{maintheorem}{Main Theorem}

\newcommand{\RR}{\mathbb{R}}
\newcommand{\CC}{\mathbb{C}}

\newcommand{\PP}{\mathbb{P}}

\usepackage{subcaption}
\newtheorem{remark}{Remark}

\usepackage[
    backend=biber,
    style=alphabetic
]{biblatex}

\begin{document}

\title{Triangulation of Points Constrained to a Plane}

\author{
\large Petr Hrubý\textsuperscript{1}, Elima Shehu\textsuperscript{2} \\[0.5em]
\small
\textsuperscript{1}KTH Royal Institute of Technology, Stockholm, Sweden\\
\small
\textsuperscript{2}Otto-von-Guericke-University Magdeburg, Germany
}

\maketitle

\begin{abstract}
We study the set of image tuples arising from fixed cameras observing varying planar 3-dimensional point configurations. We derive a formula for the number of complex critical points of the triangulation problem, which seeks to reconstruct such configurations from noisy image data. Valid for an arbitrary number of views, this formula quantifies the intrinsic algebraic complexity of planar triangulation. We validate our theoretical findings through numerical experiments on both synthetic and real data, demonstrating that incorporating the planar incidence constraints leads to faster point reconstruction and improved accuracy compared to unconstrained triangulation.
\medskip
\\
\noindent\textbf{Keywords:} coplanar points, critical points, triangulation
\end{abstract}

\section{Introduction}
\label{sec:intro}

The structure-from-motion (SfM) pipeline takes a collection of images as input and aims to recover both the camera poses and the 3-dimensional structure of the scene. After detecting visual features in the images and establishing correspondences across multiple views, these correspondences are used to estimate the camera geometry, which in turn enables triangulation of the underlying scene.

In this work, we focus on the triangulation stage and, in particular, on point correspondences arising from world points constrained to a common plane; see Figure~\ref{fig:plane-triangulation}. \begin{figure}[H]
\centering
\resizebox{0.4\linewidth}{!}{%
\begin{tikzpicture}[scale=0.2, line cap=round, line join=round]

% -------------------------
% Key points
% -------------------------

% Image plane
\coordinate (I1) at (8,7);
\coordinate (I2) at (12,5);
\coordinate (I3) at (12,0);
\coordinate (I4) at (8,2);

% World plane (parallelogram)
\coordinate (A)  at (18,5);
\coordinate (B)  at (31,2);
\coordinate (D)  at (31,13);
\coordinate (E)  at (18,16);

% Define spanning vectors of the plane
\coordinate (u) at ($(B)-(A)$);
\coordinate (v) at ($(E)-(A)$);

% Image and world point
\coordinate (x) at (9,4.7);
\coordinate (X) at (23.2,10.4);

\coordinate (C)  at (2,2);
% -------------------------
% Image plane
% -------------------------
\fill[gray!10] (I1)--(I2)--(I3)--(I4)--cycle;
\draw[line width=1.8pt] (I1)--(I2)--(I3)--(I4)--cycle;

% -------------------------
% World plane
% -------------------------
\fill[gray!15] (A)--(B)--(D)--(E)--cycle;
\draw[line width=1.8pt] (A)--(B)--(D)--(E)--cycle;

% Grid on plane
\foreach \t in {0.2,0.4,0.6,0.8}{
  \draw[gray!70, line width=1pt] ($(A)+\t*(v)$) -- ($(B)+\t*(v)$);
  \draw[gray!70, line width=1pt] ($(A)+\t*(u)$) -- ($(E)+\t*(u)$);
}

% -------------------------
% Camera and ray
% -------------------------

\draw[line width=1.8pt] (C)--(x);
% \draw[line width=1.8pt] (x)--(X);
\draw[line width=1.5pt] (2,2) -- (x);
\draw[line width=1.5pt] (10,5) -- (10.5,5.2);
\draw[line width=1.5pt] (11.1,5.45) -- (X);

\fill[blue] (x) circle[radius=0.45];
\node[below right] at (x) {$x$};

% Highlight special X
\fill[blue] (X) circle[radius=0.45];
\node[below right] at (X) {$X$};

% Plane label
\node[gray!50] at ($(A)!0.9!(D)$) {$\Pi$};

\fill[red] (C) circle[radius=0.45];
\node[below left] at (C) {$C$};
\end{tikzpicture}}
\caption{Illustration of planar triangulation. A world point \(X\) constrained to a plane is observed by a pinhole camera \(C\), producing an image point \(x\).}
\label{fig:plane-triangulation}
\end{figure}
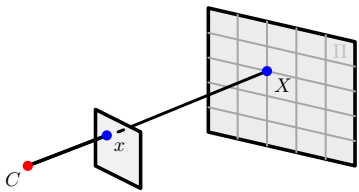
The main reason for this restriction is the simple observation that many real-world scenes encountered in practice are far from generic 3-dimensional configurations. Indoor environments and urban settings are dominated by planar surfaces such as walls, floors, ceilings, and facades, and the visual features extracted from these scenes, corners, junctions, and repeated architectural patterns are therefore frequently coplanar\cite{Holz2018,Li2020}. However, to the best of our knowledge, the geometric constraints induced by planarity are typically not explicitly exploited in standard triangulation pipelines. Treating all points as generic elements of \( \PP^3 \) overlooks available geometric structure and introduces unnecessary degrees of freedom into the reconstruction problem.

The main tools in our work are algebraic varieties, that is, the vanishing sets of systems of polynomial equations. Algebraic varieties have been used extensively to study triangulation problems arising from point correspondences, line correspondences, and minimal problems; see, for example, \cite{agarwal2019ideals,breiding2022line,kileel2022snapshot,Kileel_Thesis,Duff20_partialvis}. In these works, algebraic varieties arise naturally from the algebraic structure
of pinhole camera models.

A pinhole camera is modeled by a (complex) projective linear map \mbox{\(C:\PP^3\dashrightarrow\nolinebreak\PP^2\)} defined by a full-rank \( 3 \times 4 \) matrix.
A camera arrangement with \( m \geq 2 \) cameras is denoted by
\( \mathcal{C} = (C_1,\dots,C_m) \), and the associated joint camera map
\[
\Phi_{\mathcal{C}} :
\PP^3 \dashrightarrow (\PP^2)^m,
\qquad
X \longmapsto (C_1X,\dots,C_mX),
\]
models the process of imaging a world point \( X \) in homogeneous coordinates using \( m \) cameras.
For fixed cameras, the \emph{(point) multiview variety} \(\mathcal{M}_{\mathcal{C}}\) is defined as the smallest algebraic variety containing all image correspondences arising from~\( \Phi_{\mathcal{C}} \).

Our main contribution is the definition and study of planar-anchored point multiview varieties. Let \( \Pi \subset \PP^3 \) be a plane disjoint from all camera centers. Restricting the joint camera map to \( \Pi \) gives the rational map
\[
\Phi_{\mathcal{C}}|_{\Pi} :
\Pi \dashrightarrow (\PP^2)^m,
\qquad
X \longmapsto (C_1X,\dots,C_mX).
\]
The associated \emph{planar-anchored point multiview variety} is defined as the Zariski closure of the image,
\(
\mathcal{M}^\Pi_{\mathcal{C}} \;:=\; \overline{\Phi_{\mathcal{C}}|_{\Pi}(\Pi)} \;\subset\; (\PP^2)^m,
\)
i.e., the smallest algebraic variety in \( (\PP^2)^m \) containing all image correspondences arising from
points lying in \( \Pi \).
For these planar-anchored point multiview varieties, we derive explicit formula for their Euclidean distance degree (EDdeg), which serves as a measure of the algebraic complexity of error correction using exact algebraic methods; see, for example,\cite{draisma2016euclidean,breiding2024metric}. The main theoretical result of this work is the following theorem.

\begin{theorem}\label{EDdeg_plane-thm}
Let \( \mathcal{C} \) be a generic arrangement of \( m\geq 2 \) calibrated pinhole cameras. Then the Euclidean
distance degree of the affine planar-anchored point multiview variety $M^\Pi_{\mathcal{C}}$ satisfies
\[
EDdeg(M^\Pi_{\mathcal{C}})
= \frac{9}{2}m^2 - \frac{13}{2}m + 3.
\]
\end{theorem}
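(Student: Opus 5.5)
The plan is to reduce the problem to counting solutions of two polynomial equations in the plane, using the fact that $\mathcal{M}^\Pi_{\mathcal{C}}$ is parametrized by $\Pi\cong\PP^2$. The proof then has three steps: set up the critical equations, bound their solutions by Bézout, and subtract the spurious solutions. I have not verified the final count.

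\textbf{Parametrization.} Since $\Pi$ avoids all camera centers, each restriction $C_i|_\Pi$ is a homography $H_i\in GL_3$. The variety is therefore the closure of the image of $\PP^2\ni u\mapsto (H_1u,\dots,H_mu)$. After passing to affine charts, write $u=(u_1,u_2,1)$ and
\[
x_i(u)=\frac{p_i(u)}{\ell_i(u)}\in\CC^2,
\]
where $p_i=(p_{i1},p_{i2})$ and $\ell_i$ are affine-linear in $(u_1,u_2)$. Thus $M^\Pi_{\mathcal{C}}$ is a rational surface in $\CC^{2m}$. Its singular locus and points at infinity come only from the lines $L_i=\{\ell_i=0\}$ and the line at infinity. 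Along each $L_i$, the $i$-th image coordinate escapes to infinity.

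\textbf{Critical equations and Bézout bound.} For generic data $y=(y_1,\dots,y_m)$, the ED critical points are the solutions $u\in\CC^2\setminus\bigcup_i L_i$ of
\[
\sum_{i=1}^m \bigl(x_i(u)-y_i\bigr)\cdot \partial_{u_j}x_i(u)=0,\qquad j=1,2.
\]
We have $\partial_{u_j}x_i=(\ell_i\,\partial_jp_i-p_i\,\partial_j\ell_i)/\ell_i^2$, whose numerator has degree $1$. Likewise $x_i-y_i=(p_i-y_i\ell_i)/\ell_i$. So the $i$-th summand is a quadric divided by $\ell_i^3$. Multiplying by $\prod_k\ell_k^3$ gives two polynomial equations $F_1=F_2=0$ of degree $3m-1$. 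Their solution count in $\PP^2$ is at most $(3m-1)^2=9m^2-6m+1$.

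The ED degree is obtained by subtracting the intersection multiplicities of $F_1,F_2$ at spurious points. The target formula predicts a total excess of $\tfrac92m^2+\tfrac12m-2$. These points come from three sources:
\begin{itemize}
\item the $\binom m2$ points $L_i\cap L_k$, where every summand of $F_j$ is divisible by $\ell_i^2\ell_k^2$ except the two summands with indices $i$ and $k$;
\item the intersections of the $L_i$ with the curves $F_j$ along the rest of $L_i$;
\item the line at infinity of $\Pi$.
\end{itemize}
For calibrated cameras, the lines $L_i$ are special with respect to the isotropic quadric $\sum_i\|x_i\|^2$. The normal direction of each homography's singular line is tied to the rotation part of $C_i$. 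I therefore expect tangency-type contributions at the points where the $L_i$ meet the isotropic points at infinity, and these are precisely where the "calibrated" hypothesis enters.

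\textbf{Computing the excess.} I would compute the local intersection multiplicity at each type of spurious point using explicit local coordinates. For a pair point $L_i\cap L_k$, set $\ell_i=s$ and $\ell_k=t$. One checks that $F_j\equiv s^3t^3(\cdots)+s^2(\dots)+t^2(\dots)$ modulo higher terms, which suggests a contribution of $4$ per pair, for a total of $2m^2-2m$. The remaining excess, $\tfrac52m^2+\tfrac52m-2$, must then come from $L_i\cap L_\infty$ and the circular points. To pin these down I would use genericity of the cameras: translate and rotate the cameras to normalize the $H_i$, then do a Puiseux or blow-up analysis at the circular points.

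\textbf{Cross-check and main obstacle.} As an independent check, I would compare with the Euler-characteristic formula of Maxim--Rodriguez--Wang. This formula expresses EDdeg as a signed Euler characteristic of $M^\Pi_{\mathcal{C}}$ minus a generic translate of the isotropic quadric hypersurface, and $M^\Pi_{\mathcal{C}}\cong\PP^2\setminus(\bigcup L_i\cup L_\infty)$. I would also verify the base case $m=2$, where the formula predicts $8$, numerically via homotopy continuation. The main obstacle is the local multiplicity computation at infinity and at the isotropic points. Uniform multiplicities there hold only for calibrated cameras in general position, and establishing that genericity suffices is where most of the work lies.
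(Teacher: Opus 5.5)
Your Bézout-plus-excess strategy is a legitimate alternative to the paper's argument. The paper instead applies the Maxim--Rodriguez--Wang identity with inclusion--exclusion,
\[
\mathrm{EDdeg}=\chi(\mathcal M^\Pi_{\mathcal C})-\chi(D^\Pi_\infty)+\chi(D^\Pi_Q\cap D^\Pi_\infty)-\chi(D^\Pi_Q),
\]
where $D^\Pi_\infty=\bigcup_{i=1}^m L_i$ and $D^\Pi_Q$ is a degree-$2m$ curve whose only singularities are $\binom{m}{2}$ nodes at the points $L_i\cap L_k$. As written, however, your proposal has a genuine gap. The whole proof is the excess count, which you do not carry out, and the one local multiplicity you estimate is wrong.

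To see this, write $Q_{ij}=(p_i-y_i\ell_i)\cdot(\ell_i\partial_jp_i-p_i\partial_j\ell_i)$ and $N_i=\|p_i\|^2$. Then $F_j=\sum_l Q_{lj}\prod_{k\neq l}\ell_k^3$ and $Q_{ij}=-N_i\,\partial_j\ell_i+O(\ell_i)$. Near $p_{ik}=L_i\cap L_k$, set $s=\ell_i$, $t=\ell_k$, and let $V=\prod_{l\neq i,k}\ell_l^3$, which is a unit there. Then
\[
F_j=V\cdot\bigl[\,t^3\bigl(-N_i\,\partial_j\ell_i+O(s)\bigr)+s^3\bigl(-N_k\,\partial_j\ell_k+O(t)\bigr)+O(s^3t^3)\bigr].
\]
So the summands $i$ and $k$ carry $t^3$ and $s^3$, not $t^2$ and $s^2$. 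Both curves have a triple point at $p_{ik}$ with tangent cones $a_jt^3+b_js^3$. Since $a_1b_2-a_2b_1=N_iN_k\det(\nabla\ell_i,\nabla\ell_k)\neq0$ for generic cameras and chart, each pair contributes $9$, not $4$. The leftover excess is then $\tfrac92m^2+\tfrac12m-2-9\binom{m}{2}=5m-2$, which is linear in $m$. Your plan to locate a quadratic amount in ``tangency-type'' contributions at the circular points would therefore fail.

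In fact the leftover is accounted for without any tangency and without using calibration. On $L_i$ one has $F_j|_{L_i}=-N_i\,\partial_j\ell_i\prod_{k\neq i}\ell_k^3$, so the two restrictions are proportional. Besides the pair points, they vanish exactly at the two points with $N_i=0$, namely those mapping to $(1:\pm\sqrt{-1}:0)$ in image $i$; these exist for any camera. At such a point the linear parts of $F_1,F_2$ in the local coordinates $(N_i,\ell_i)$ are $-\partial_j\ell_i\,N_i+c_j\,\ell_i$. Their determinant is a nonzero multiple of $\det(\nabla N_i,\nabla\ell_i)$, which is nonzero because $N_i|_{L_i}$ has two simple zeros. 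Each such point therefore contributes $1$, for a total of $2m$.

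On the chart line $u_3=0$, define $F_3$ analogously via $\partial_{u_3}$. Euler's relation $u_1F_1+u_2F_2+u_3F_3=0$ then forces $F_1|_{u_3=0}=u_2G$ and $F_2|_{u_3=0}=-u_1G$ with $\deg G=3m-2$. For a generic chart this gives $3m-2$ simple common points. Altogether $9\binom{m}{2}+2m+(3m-2)=\tfrac92m^2+\tfrac12m-2$, exactly the required excess.

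Note also that $u_3=0$ is only an artifact of your chart on $\Pi$. Its points off the $L_i$ are honest points of $M^\Pi_{\mathcal C}$, which is smooth and equals $\PP^2\setminus\bigcup_{i=1}^m L_i$. With your description $\PP^2\setminus(\bigcup L_i\cup L_\infty)$, the Euler-characteristic cross-check would give the wrong number.

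With these corrections your route closes. You also need the routine facts that for generic data the genuine critical points are simple and $F_1,F_2$ share no component. The result is an elementary Bézout proof that avoids the Milnor-fiber and Parusi\'nski--Pragacz machinery the paper relies on.
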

Using tools from applied algebraic geometry, we prove Theorem~\ref{EDdeg_plane-thm}, which was previously conjectured in \cite{DuffRydell2024}, where anchored varieties are studied in a general framework. While several properties of planar-anchored point multiview varieties can be viewed as special cases of results proved there, our approach focuses on the planar-anchored case alone and provides explicit coordinate descriptions and proofs.

From both geometric and algorithmic perspectives, the planar constraint has important consequences: restricting world points to a plane reduces the degrees of freedom of triangulation from three to two, simplifying the algebraic structure of the reconstruction problem. This reduction is reflected in the geometry of the associated multiview variety and in the complexity of the corresponding Euclidean distance minimization problem, suggesting that planar triangulation admits faster and more stable solvers than unconstrained 3D triangulation. Similar observations appear in \cite{RST23}, where points constrained to a line reduce the intrinsic difficulty of triangulation.

Building on this viewpoint, we complement our theoretical analysis with numerical experiments assuming complete visibility across calibrated pinhole cameras. These experiments confirm the predicted complexity and demonstrate strong practical performance on synthetic data in \texttt{HomotopyContinuation.jl}. We also develop solvers for real data, achieving more accurate triangulation than classical unconstrained methods.

\section{Planar-Anchored Point Multiview Varieties}
A \emph{camera} is a full-rank $3 \times 4$ matrix. 
A \emph{camera arrangement} is a collection 
\[
\mathcal{C} = (C_1, \ldots, C_m), \quad m \geq 2,
\]
of such cameras, with the requirement that their \emph{centers} (that is, their kernels) are all distinct.  

A \emph{variety} means the solution set of a system of polynomial equations. 
Given a set $U$, its \emph{Zariski closure} is the smallest variety containing $U$.  
We will work inside complex projective space, written $\mathbb{P}^n$, defined as
\[
\mathbb{P}^n = \big(\mathbb{C}^{n+1} \setminus \{0\}\big) / \sim,
\quad \text{where } x \sim y \;\; \text{whenever } x \text{ and } y 
\text{ differ by a nonzero scalar.}
\]

We write \( GL_3 \) for the group of invertible \( 3\times 3 \) complex matrices, and by~\( PGL_3\) the associated projective linear group, whose elements act on \( \mathbb{P}^2 \) up to nonzero scalar multiples.

For a rational map
\[
\Phi : W \dashrightarrow I,
\]
we write $\Phi|_\mathcal{R}$ to mean the restriction of $\Phi$ to any subset $\mathcal{R} \subseteq W$.

\begin{definition} \label{def:planar_anchored_variety} 
Let $X$ be a point lying on a plane $\Pi \subset \mathbb{P}^3$, 
where $\Pi$ contains none of the camera centers. \textit{The planar-anchored point multiview variety}, denoted~$\mathcal{M}_\mathcal{C}^\Pi$, 
is defined as the Zariski closure of the image of the map
\[
\Phi_\mathcal{C}|_\Pi : \Pi \dashrightarrow (\mathbb{P}^2)^m, 
\quad 
X \longmapsto (C_1 X, \ldots, C_m X).
\]
\end{definition} 

The planar-anchored point multiview variety $\mathcal{M}_\mathcal{C}^\Pi$ is the smallest variety that contains all point correspondences $\Phi_\mathcal{C}(X)$ for $X \in \Pi$ for $\Pi$ meeting no center.

\medskip

%-------------------------------------------------
We next describe the restricted image parametrically and show that it is characterized by planar homographies.
\begin{proposition} \label{prop:homographies}
Let $\mathcal{C}=(C_1,\dots,C_m)$ be cameras with centers $c_i$, and let~$\Pi\subset\PP^3$ 
be a plane with $c_i\notin\Pi$ for all $i$.
Choose $U_\Pi\in \mathbb C^{4\times3}$ full rank spanning $\Pi$, and set $A_i:=C_iU_\Pi\in GL_3$.
Then
\[
\operatorname{Im}(\Phi_\mathcal{C}|_\Pi)
=\{(x_1,\dots,x_m)\in(\PP^2)^m:\exists\,y\in\PP^2,\ x_i\sim A_i y\ \ \forall i\},
\]
and equivalently
\[
\overline{\operatorname{Im}(\Phi_\mathcal{C}|_\Pi)}
=\{(x_1,\dots,x_m)\in(\PP^2)^m:\ x_j\sim H_{ji}x_i\ \ \forall i,j\},
\]
where $H_{j i}:=A_jA_i^{-1}\in PGL_3$.
\end{proposition}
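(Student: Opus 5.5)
The plan is to parametrize $\Pi$ by $U_\Pi$, get the first description directly, and then show that the image is already a closed set cut out by the homography relations.

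\textbf{Why $A_i$ is invertible.} Every point of $\Pi$ can be written $X = U_\Pi y$ with $y\in\PP^2$, and $y\mapsto U_\Pi y$ is an isomorphism $\PP^2\to\Pi$, since $U_\Pi$ has full rank 3. The matrix $A_i=C_iU_\Pi$ is $3\times 3$. Suppose $A_iy=0$ for some $y\neq 0$. Then $U_\Pi y\neq 0$ lies in $\ker C_i$, so $c_i=U_\Pi y\in\Pi$, a contradiction. Hence $A_i\in GL_3$.

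\textbf{First description.} Because no $A_i$ kills a nonzero vector, the map $\Phi_\mathcal{C}|_\Pi$ is defined everywhere on $\Pi$. Its image is exactly $\{(A_1y,\dots,A_my): y\in\PP^2\}$, which is the first formula.

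\textbf{Second description, inclusion $\subseteq$.} Let $Z$ denote the set on the right-hand side of the second formula. Given a point $(A_1y,\dots,A_my)$ of the image, set $x_i=A_iy$. Then $y\sim A_i^{-1}x_i$, so $x_j=A_jy\sim A_jA_i^{-1}x_i=H_{ji}x_i$. This shows $\operatorname{Im}\subseteq Z$.

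\textbf{Second description, inclusion $\supseteq$.} Take $(x_1,\dots,x_m)\in Z$ and set $y:=A_1^{-1}x_1$, which is nonzero. For each $j$ we have $x_j\sim H_{j1}x_1=A_jy$, so the tuple lies in the image. Thus $\operatorname{Im}(\Phi_\mathcal{C}|_\Pi)=Z$.

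\textbf{Zariski closedness.} The condition $x_j\sim H_{ji}x_i$ is the vanishing of the $2\times2$ minors of the $3\times 2$ matrix $[\,x_j\mid H_{ji}x_i\,]$. These minors are bihomogeneous polynomials, so $Z$ is Zariski closed in $(\PP^2)^m$. It follows that the image equals its own closure, which gives the second formula. Alternatively, $\PP^2$ is projective, so its image under a morphism is closed.

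\textbf{Main obstacle.} The only subtle point is that the image is closed, so no limit points have to be added when taking the closure. This holds because $\Phi_\mathcal{C}|_\Pi$ has no base locus: no camera center lies on $\Pi$. The one check needed is that $H_{ji}x_i$ is never zero, which holds because $H_{ji}$ is invertible.
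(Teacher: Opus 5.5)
Your proof is correct and follows the same route as the paper: you parametrize $\Pi$ by $U_\Pi$, read off $x_i\sim A_iy$, eliminate $y$ to obtain $x_j\sim H_{ji}x_i$, and note that these relations are cut out by the $2\times 2$ minors (the paper's $x_j\wedge H_{ji}x_i=0$), hence define a closed set. You are in fact more complete than the paper. It only shows $\operatorname{Im}(\Phi_\mathcal{C}|_\Pi)\subseteq Z$ together with closedness of your set $Z$ and leaves the reverse inclusion implicit, whereas your choice $y=A_1^{-1}x_1$ proves $Z\subseteq\operatorname{Im}(\Phi_\mathcal{C}|_\Pi)$, so the image is already closed and equals $Z$.
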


\begin{proof}
Since $\Pi$ is a plane in $\mathbb{P}^3$, every point $X \in \Pi$ can be written as
\[
X = U_\Pi y, \quad y \in \mathbb{P}^2,
\]
where $U_\Pi$ is a fixed $4 \times 3$ full-rank matrix spanning $\Pi$.  

For each camera $C_i$, the image of $X$ is
\[
x_i = C_i X = C_i U_\Pi y = A_i y,
\]
where $A_i := C_i U_\Pi \in GL_3,$  since \( c_i \notin \Pi \) implies that \( C_i|_\Pi \) is injective..  
Thus, the tuple $(x_1,\ldots,x_m)$ belongs to the image of~$\Phi_\mathcal{C}|_\Pi$ if and only if there exists some~$y \in \mathbb{P}^2$ such that
\[
x_i \sim A_i y \qquad \forall i.
\]
This proves the first description.

For the second description, suppose $x_i \sim A_i y$ and $x_j \sim A_j y$. Eliminating $y$, we obtain
\[
x_j \sim A_j A_i^{-1} x_i,
\]
The assumption $c_i \notin \Pi$ guarantees that $A_i = C_i U_\Pi$ is invertible in $PGL_3$.
Defining $H_{j i} := A_j A_i^{-1} \in PGL_3$, it follows that
\[
x_j \sim H_{j i} x_i \qquad \forall i,j.
\]
which is Zariski closed (defined by polynomials equivalent \mbox{to~$x_j\wedge H_{j i}x_i=0$).} Hence, the image of $\Phi_\mathcal{C}|_\Pi$ (or more precisely, its Zariski closure) is characterized as
\[
\overline{\mathrm{Im}(\Phi_\mathcal{C}|_\Pi)} 
= \{ (x_1,\ldots,x_m)\in(\PP^2)^m : x_j \sim H_{j i} x_i \ \ \forall i,j \}.
\]
The homography relations define polynomial equations, hence a Zariski closed subset of $(\mathbb{P}^2)^m$.
\end{proof}
The parametrization of the plane \( \Pi \) is unique up to projective linear automorphism. Any two choices \( U_\Pi \) and \( U_\Pi' \) differ by right multiplication with a matrix~\( G \in GL_3 \), i.e., \( U_\Pi' = U_\Pi G \), and \(A'_i=A_iG\). Hence, the resulting homography relations are invariant under the natural \( PGL_3 \)-action, \( A_j G (A_i G)^{-1} = A_j A_i^{-1} \).
%-------------------------------------------------
\subsection{Properties of Planar-Anchored Point Multiview Varieties}
A key feature of planar-anchored point multiview varieties is that they are
linearly isomorphic to multiview varieties obtained from projections
$\PP^2 \to \PP^2$. Here, a linear isomorphism refers to a map given by a
matrix whose inverse is also linear.

Let $\widehat{\mathcal C}=(\widehat C_1,\ldots,\widehat C_m)$ be an
arrangement of $m$ full-rank $3\times3$ matrices, and assume $m\ge 2.$
We denote by $\mathcal M^{2,2}_{\widehat{\mathcal C}}$ the Zariski closure
of the image of the map
\[
\Phi_{\widehat{\mathcal C}}:\PP^2 \longrightarrow (\PP^2)^m,
\qquad
X \longmapsto (\widehat C_1 X,\ldots,\widehat C_m X).
\]

\begin{theorem}
Let $\phi_{\Pi}:\Pi \to \mathbb P^2$ and
$\psi_{\mathcal C,i}:C_i(\Pi)\to\mathbb P^2$
be any choices of linear isomorphisms.
Let $\widehat{\mathcal C}$ denote the arrangement of matrices
\[
\widehat C_i
\;:=\;
\psi_{\mathcal C,i}\circ C_i \circ \phi_{\Pi}^{-1},
\qquad i=1,\ldots,m.
\]
Then
\[
\psi_{\mathcal C}
:=
(\psi_{\mathcal C,1},\ldots,\psi_{\mathcal C,m})
:\mathcal{M}_\mathcal{C}^\Pi
\longrightarrow
\mathcal M^{2,2}_{\widehat{\mathcal C}}
\]
is a linear isomorphism.
\end{theorem}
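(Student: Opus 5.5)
The plan is to work entirely at the level of images of parametrizations and then pass to Zariski closures. A linear isomorphism commutes with closure, so it suffices to show that $\psi_{\mathcal C}$ carries the image of $\Phi_{\mathcal C}|_\Pi$ onto the image of $\Phi_{\widehat{\mathcal C}}$.

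First I will set up the objects concretely. Since $c_i\notin\Pi$, the restriction $C_i|_\Pi$ is injective, as used in the proof of Proposition~\ref{prop:homographies}. Hence $C_i(\Pi)$ is all of $\PP^2$, and $\psi_{\mathcal C,i}$ is simply an element of $PGL_3$ acting on the $i$-th factor. Writing $\phi_\Pi^{-1}(y)=U_\Pi y$ for a suitable full-rank $U_\Pi$ spanning $\Pi$, we get $\widehat C_i=\psi_{\mathcal C,i}A_i$ with $A_i=C_iU_\Pi\in GL_3$. In particular each $\widehat C_i$ is a full-rank $3\times 3$ matrix, so $\widehat{\mathcal C}$ is a legitimate arrangement and $\Phi_{\widehat{\mathcal C}}$ is a morphism defined on all of $\PP^2$.

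Next I check that the square commutes: for $X\in\Pi$,
\[
\psi_{\mathcal C}(\Phi_{\mathcal C}|_\Pi(X))=(\psi_{\mathcal C,i}C_iX)_i=(\widehat C_i\phi_\Pi(X))_i=\Phi_{\widehat{\mathcal C}}(\phi_\Pi(X)).
\]
Because $\phi_\Pi$ is bijective, this gives $\psi_{\mathcal C}(\operatorname{Im}\Phi_{\mathcal C}|_\Pi)=\operatorname{Im}\Phi_{\widehat{\mathcal C}}$.

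The product map $\psi_{\mathcal C}:(\PP^2)^m\to(\PP^2)^m$ is an automorphism, block-diagonal in $PGL_3^m$, with inverse $(\psi_{\mathcal C,i}^{-1})_i$. It is therefore a homeomorphism in the Zariski topology, since it and its inverse are polynomial. Consequently it maps the closure of a set onto the closure of its image, which yields $\psi_{\mathcal C}(\mathcal M^\Pi_{\mathcal C})=\mathcal M^{2,2}_{\widehat{\mathcal C}}$. Its restriction to $\mathcal M^\Pi_{\mathcal C}$ is then a linear isomorphism onto $\mathcal M^{2,2}_{\widehat{\mathcal C}}$, with linear inverse given by the restriction of $(\psi_{\mathcal C,i}^{-1})_i$.

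The main subtlety, rather than a genuine obstacle, is the closure step together with the well-definedness of $\psi_{\mathcal C}$ on the closure. The maps $\psi_{\mathcal C,i}$ are a priori defined only on $C_i(\Pi)$, so I must first observe that $C_i(\Pi)=\PP^2$, which means $\psi_{\mathcal C}$ is defined on all of $(\PP^2)^m$. As a cross-check, I can use Proposition~\ref{prop:homographies}: the closure is cut out by $x_j\sim H_{ji}x_i$, and under $\psi_{\mathcal C}$ these equations become $x_j\sim \widehat C_j\widehat C_i^{-1}x_i$ with $\widehat C_j\widehat C_i^{-1}=\psi_{\mathcal C,j}H_{ji}\psi_{\mathcal C,i}^{-1}$. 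These are exactly the homography equations describing $\mathcal M^{2,2}_{\widehat{\mathcal C}}$, since the same argument applies with $U_\Pi$ replaced by the identity.
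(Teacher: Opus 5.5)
Your proof is correct. It shares the paper's key identity $\psi_{\mathcal C}\circ\Phi_{\mathcal C}|_\Pi=\Phi_{\widehat{\mathcal C}}\circ\phi_\Pi$ but finishes by a different argument.

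The paper argues pointwise. For surjectivity it takes $x\in\mathcal M^{2,2}_{\widehat{\mathcal C}}$ and writes $x=\Phi_{\widehat{\mathcal C}}(X)$. For injectivity it writes $x,x'$ as images of points $Y,Y'\in\Pi$ and uses that $C_i|_\Pi$ is injective. Both steps tacitly assume that the images of $\Phi_{\widehat{\mathcal C}}$ and $\Phi_{\mathcal C}|_\Pi$ are already Zariski closed. That is true, since they are images of the projective variety $\PP^2$ under morphisms (cf.\ Proposition~\ref{bireg.isomorphism}), but it is not justified at that point.

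You instead first check that $C_i(\Pi)=\PP^2$, so that $\psi_{\mathcal C}$ is defined on the whole ambient space. You then treat it as an automorphism of $(\PP^2)^m$, hence a Zariski homeomorphism that carries the closure of $\operatorname{Im}\Phi_{\mathcal C}|_\Pi$ onto the closure of $\operatorname{Im}\Phi_{\widehat{\mathcal C}}$. This buys three things:
\begin{itemize}
\item you never need the images to be closed;
\item injectivity comes for free;
\item you exhibit the linear inverse $(\psi_{\mathcal C,i}^{-1})_i$, which the paper's definition of ``linear isomorphism'' requires but its proof addresses only implicitly.
\end{itemize}
The paper's version is more hands-on but rests on that unstated closedness. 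Yours is shorter, self-contained, and works directly with the Zariski closures that appear in the definitions. Your homography cross-check is consistent but not needed for the argument.
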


\begin{proof}
First note that $\Phi_{\mathcal C}^{\Pi}$ is well defined everywhere, since
$\Pi$ contains no camera center. Additionally, $\Phi_{\widehat{\mathcal C}}$ is defined everywhere. By construction, we have~\(\psi_{\mathcal C}(\Phi_{\mathcal C}|_{\Pi}(Y))
=
\Phi_{\widehat{\mathcal C}}(\phi_\Pi(Y))\),
which shows that $\psi_{\mathcal C}$ is a well-defined map.

\medskip

To prove surjectivity, let $x\in\mathcal M^{2,2}_{\widehat{\mathcal C}}$.
Then there exists $X\in\mathbb P^2$ such that~\mbox{\(x_i=\widehat C_i X\)} for all $i$.
Let $Y=\phi_\Pi^{-1}(X)\in\Pi$ and define
\(x'=\Phi_{\mathcal C}|_{\Pi}(Y)\), where \(x'_i=C_i(Y)\).
By construction, \(\psi_{\mathcal C}(x')=x\), which proves surjectivity.

\medskip

For injectivity, assume
\(\psi_{\mathcal C}(x)=\psi_{\mathcal C}(x')\).
Then for each $i$ we have~\mbox{\(C_i(Y)=C_i(Y')\).}
Since each restriction \(C_i|_\Pi:\Pi\to\mathbb P^2\) is a projective linear
isomorphism, it follows that \(Y=Y'\).
Therefore \(x=x'\), proving injectivity.
\end{proof}

\begin{remark}
This statement can also be viewed as a specialization of
\cite[Lemma~2.1]{DuffRydell2024} to the case
$\Lambda=\Pi\cong\mathbb P^2$.
\end{remark}
The linear isomorphisms constructed above allow us to relate planar-anchored
multiview varieties to the multiview varieties $\mathcal M^{2,2}_{\widehat{\mathcal C}}$. In particular, results established for multiview varieties arising from projections $\PP^2 \to \PP^2$ carry over directly to
$\mathcal{M}_\mathcal{C}^\Pi$. This observation will allow us to derive several structural properties of planar-anchored point multiview varieties by working directly with $3\times3$ matrix arrangements.

\begin{proposition}\label{bireg.isomorphism} 
Let $\mathcal{C} = (C_1,\dots,C_m)$ be a camera arrangement of pinhole cameras, and let $\Pi\subset\PP^3$ be a plane
containing no center. Then the planar-anchored point multiview variety
$\mathcal{M}_\mathcal{C}^\Pi$ is irreducible and
\[
\mathcal{M}_\mathcal{C}^\Pi\ \cong\ \PP^2.
\]
In particular, $\dim\mathcal{M}_\mathcal{C}^\Pi=2$.
\end{proposition}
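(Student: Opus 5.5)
The plan is to exhibit an explicit isomorphism $\PP^2\to\mathcal{M}_\mathcal{C}^\Pi$ through the parametrization of Proposition~\ref{prop:homographies}. Fix $U_\Pi$ and set $A_i=C_iU_\Pi\in GL_3$. Consider
\[
\iota:\PP^2\longrightarrow(\PP^2)^m,\qquad y\longmapsto (A_1y,\dots,A_my).
\]
Since each $A_i$ is invertible, $A_iy\neq 0$ for every nonzero $y$, so $\iota$ is a morphism defined everywhere. It is not merely rational. By Proposition~\ref{prop:homographies}, its image is exactly $\operatorname{Im}(\Phi_\mathcal{C}|_\Pi)$.

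First I show that this image is already closed, so it equals $\mathcal{M}_\mathcal{C}^\Pi$. The morphism $\iota$ is the composition of the diagonal embedding $\Delta:\PP^2\to(\PP^2)^m$ with the automorphism $(A_1,\dots,A_m)$ of $(\PP^2)^m$. The diagonal of a projective variety is closed, and automorphisms preserve closedness, so $\iota(\PP^2)$ is closed. Alternatively, one can argue that $\PP^2$ is complete, so the image of the morphism is closed. Either way, $\mathcal{M}_\mathcal{C}^\Pi=\iota(\PP^2)$. This also agrees with the homography description in Proposition~\ref{prop:homographies}: if $x_j\sim H_{j1}x_1$ for all $j$, then $y:=A_1^{-1}x_1$ works.

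Next I construct the inverse. The projection $\pi_1:(\PP^2)^m\to\PP^2$ onto the first factor, followed by $A_1^{-1}$, gives a morphism
\[
\rho:=A_1^{-1}\circ\pi_1|_{\mathcal{M}_\mathcal{C}^\Pi}:\mathcal{M}_\mathcal{C}^\Pi\to\PP^2 .
\]
We have $\rho\circ\iota=\mathrm{id}_{\PP^2}$ directly. For $x\in\mathcal{M}_\mathcal{C}^\Pi$, write $x=\iota(y)$; then $\iota(\rho(x))=\iota(y)=x$. Hence $\iota$ and $\rho$ are mutually inverse morphisms, and $\mathcal{M}_\mathcal{C}^\Pi\cong\PP^2$ biregularly. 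Both maps are given by linear matrices, which is consistent with the preceding theorem: the target $\mathcal M^{2,2}_{\widehat{\mathcal C}}$ there is the image of $\PP^2$ under the invertible maps $\widehat C_i$.

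Irreducibility and dimension then follow immediately. The variety $\PP^2$ is irreducible of dimension $2$, and both properties are invariant under isomorphism. The only point needing care is the closedness step, which is needed so that we work with the closure $\mathcal{M}_\mathcal{C}^\Pi$ rather than just the image. The diagonal argument settles it, and everything else is formal.
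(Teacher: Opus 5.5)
Your proof is correct and follows the paper's argument: you use the same morphism $y\mapsto(A_1y,\dots,A_my)$, the same inverse $A_1^{-1}\circ\pi_1$, and closedness of the image so that it equals $\mathcal{M}_\mathcal{C}^\Pi$. Your closedness justification (the diagonal composed with the automorphism $(A_1,\dots,A_m)$ of $(\PP^2)^m$, or completeness of $\PP^2$) is somewhat more precise than the paper's ``isomorphic to a projective variety, hence closed.'' Deducing irreducibility from the isomorphism, rather than separately as the paper does, is equally valid.
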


\begin{proof}
Choose a full-rank matrix $U_\Pi\in\CC^{4\times 3}$ whose column span equals $\Pi$. For each $i$, set
\[
A_i := C_i U_\Pi \in \CC^{3\times 3}.
\]
Since $c_i\notin\Pi$, the restriction $C_i|_\Pi:\Pi\to\PP^2$ is a
projective linear isomorphism, hence $A_i\in GL_3$.

Consider the morphism
\[
\Phi_{\mathcal C}|_\Pi:\PP^2 \longrightarrow (\PP^2)^m,
\qquad
y \longmapsto (A_1y,\dots,A_my),
\]
where we identify $\Pi \cong \PP^2$ via $y\mapsto U_\Pi y$.
By definition,
\[
\mathcal{M}_\mathcal{C}^\Pi = \overline{\mathrm{Im}(\Phi_{\mathcal C}|_\Pi)}.
\]
Since $\PP^2$ is irreducible and $\Phi_{\mathcal C}|_\Pi$ is a morphism,
$\mathrm{Im}(\Phi_{\mathcal C}|_\Pi)$ is irreducible, and therefore its Zariski closure $\mathcal{M}_\mathcal{C}^\Pi$ is irreducible.

Moreover, $\Phi_{\mathcal C}|_\Pi$ is injective because $A_1\in GL_3$, and on its image it admits the regular inverse
\[
(x_1,\dots,x_m)\longmapsto A_1^{-1}x_1.
\]
Hence $\Phi_{\mathcal C}|_\Pi$ is a biregular isomorphism from $\PP^2$ onto $\mathrm{Im}(\Phi_{\mathcal C}|_\Pi)$. Since it is biregular onto its image, the image is isomorphic to a projective variety, hence closed. In particular, $\mathrm{Im}(\Phi_{\mathcal C}|_\Pi)$ is Zariski closed and thus equals
$\mathcal{M}_\mathcal{C}^\Pi$. Therefore $\mathcal{M}_\mathcal{C}^\Pi \cong \PP^2,$ thus~$\dim \mathcal M_C^\Pi = \dim \PP^2 = 2.$
\end{proof}

%-------------------------------------------------
\begin{proposition}
The planar-anchored point multiview variety $\mathcal{M}_\mathcal{C}^\Pi$ is smooth.
In fact, $\mathcal{M}_\mathcal{C}^\Pi\cong\PP^2$ is a smooth projective surface.
\end{proposition}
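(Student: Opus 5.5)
The plan is to deduce smoothness directly from Proposition~\ref{bireg.isomorphism}, which gives a biregular isomorphism $\Phi_{\mathcal C}|_\Pi:\PP^2\to\mathcal M^\Pi_{\mathcal C}$, $y\mapsto(A_1y,\dots,A_my)$, with regular inverse $(x_1,\dots,x_m)\mapsto A_1^{-1}x_1$. Smoothness is intrinsic and preserved under isomorphism, and $\PP^2$ is smooth: it is covered by the three standard affine charts, each isomorphic to $\mathbb C^2$. Hence $\mathcal M^\Pi_{\mathcal C}$ is a smooth projective surface. It is projective because it is a Zariski closed subset of $(\PP^2)^m$, which is projective via the Segre embedding, and it has dimension $2$ by Proposition~\ref{bireg.isomorphism}.

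To make the argument concrete, and to check smoothness as a subvariety of $(\PP^2)^m$ rather than only abstractly, I would verify that the differential of the parametrization is injective at every point.

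\begin{enumerate}
\item Work in an affine chart of $\PP^2$ around $y$. Work also in affine charts of each factor of $(\PP^2)^m$ around $A_iy$. These charts exist because each $A_i$ is invertible, so $A_iy\neq 0$.
\item The first component $y\mapsto A_1y$ is an automorphism of $\PP^2$, so its differential is already an isomorphism on tangent spaces.
\item It follows that $d(\Phi_{\mathcal C}|_\Pi)$ is injective at every point.
\item Since $\Phi_{\mathcal C}|_\Pi$ is also injective and proper, it is a closed embedding. Its image therefore has a $2$-dimensional Zariski tangent space at every point, equal to its dimension, which is exactly smoothness.
\end{enumerate}

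Alternatively, one can compose with the projection $\pi_1:(\PP^2)^m\to\PP^2$ onto the first factor. Restricted to $\mathcal M^\Pi_{\mathcal C}$, the map $\pi_1$ is an isomorphism, since it is inverse to $x_1\mapsto(H_{j1}x_1)_j$ from Proposition~\ref{prop:homographies}. So $\mathcal M^\Pi_{\mathcal C}$ is the graph of the morphism $\PP^2\to(\PP^2)^{m-1}$, $x_1\mapsto(H_{21}x_1,\dots,H_{m1}x_1)$. Graphs of morphisms from smooth varieties are smooth.

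I do not expect any real obstacle. The one point requiring care is that ``isomorphic to its image'' in Proposition~\ref{bireg.isomorphism} means a genuine isomorphism of varieties, and not merely a bijective morphism, which could fail to be an immersion. The graph description settles this, since the inverse $\pi_1$ is manifestly a morphism.
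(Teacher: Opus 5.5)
Your proposal is correct and takes the same route as the paper, which simply notes that smoothness follows from the isomorphism $\mathcal{M}^\Pi_{\mathcal C}\cong\PP^2$ of Proposition~\ref{bireg.isomorphism}. Your extra checks, namely injectivity of the differential via the first projection and the graph description over the first factor, are correct but not needed, since that proposition already exhibits the regular inverse $(x_1,\dots,x_m)\mapsto A_1^{-1}x_1$.
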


\begin{proof}
Immediate, since $\mathcal{M}_\mathcal{C}^\Pi\cong\PP^2$ by the previous proposition.
\end{proof}

%-------------------------------------------------
\begin{corollary}[Plane–induced homographies]
Let notation be as above. For $(x_1,\dots,x_m)\in(\PP^2)^m$, the following are equivalent:
\begin{enumerate}
\item $(x_1,\dots,x_m)\in\mathcal{M}_\mathcal{C}^\Pi$.
\item $\exists\,y\in\PP^2$ with $x_i\sim A_i y$ for all $i$.
\item $x_j\sim H_{j i}x_i$ for all $i,j$.
\end{enumerate}
Moreover, the homographies satisfy
\[
H_{i i}=I,\quad
H_{j i}^{-1}=H_{i j},\quad
H_{k j}H_{j i}=H_{k i}.
\]
\end{corollary}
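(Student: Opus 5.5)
The equivalences will follow directly from Proposition~\ref{prop:homographies} and Proposition~\ref{bireg.isomorphism}, with one small extra step. The homography identities are pure matrix algebra in $PGL_3$.

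\emph{(1)$\Leftrightarrow$(2).} Proposition~\ref{bireg.isomorphism} shows that $\operatorname{Im}(\Phi_\mathcal{C}|_\Pi)$ is already Zariski closed, so $\mathcal{M}_\mathcal{C}^\Pi=\operatorname{Im}(\Phi_\mathcal{C}|_\Pi)$. By the first description in Proposition~\ref{prop:homographies}, this image is exactly the set of tuples with $x_i\sim A_i y$ for some $y\in\PP^2$.

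\emph{(2)$\Rightarrow$(3).} If $x_i\sim A_i y$ for all $i$, then $y\sim A_i^{-1}x_i$. Hence $x_j\sim A_jA_i^{-1}x_i=H_{ji}x_i$.

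\emph{(3)$\Rightarrow$(2).} I will not rely on the second display of Proposition~\ref{prop:homographies}; instead I argue directly. Suppose $x_j\sim H_{ji}x_i$ for all $i,j$. Put $y:=A_1^{-1}x_1$, which is a well-defined point of $\PP^2$ because $A_1\in GL_3$ and $x_1\neq 0$. Applying the hypothesis with $i=1$ gives
\[
x_j\sim H_{j1}x_1=A_jA_1^{-1}x_1=A_jy
\]
for every $j$. This is exactly (2).

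The only point needing care is the meaning of "$\sim$" for possibly degenerate vectors. Because every $H_{ji}$ is invertible, $H_{ji}x_i\neq0$, so each relation $x_j\sim H_{ji}x_i$ is a genuine equality of points of $\PP^2$. Equivalently, it is the vanishing of $x_j\wedge H_{ji}x_i$ with both factors nonzero, so no spurious solutions arise. This is the main place where an argument could slip, and invertibility of the $A_i$ settles it.

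\emph{Homography identities.} These hold already at the level of $GL_3$, hence also in $PGL_3$:
\[
H_{ii}=A_iA_i^{-1}=I,\qquad
H_{ji}H_{ij}=A_jA_i^{-1}A_iA_j^{-1}=I,\qquad
H_{kj}H_{ji}=A_kA_j^{-1}A_jA_i^{-1}=A_kA_i^{-1}=H_{ki}.
\]
By the remark following Proposition~\ref{prop:homographies}, replacing $U_\Pi$ by $U_\Pi G$ with $G\in GL_3$ does not change the $H_{ji}$. So the statement does not depend on the choice of parametrization of $\Pi$.
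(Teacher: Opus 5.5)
Your proof is correct and follows the route the paper intends. The paper gives no separate proof of this corollary: it treats it as a direct consequence of Proposition~\ref{prop:homographies} and Proposition~\ref{bireg.isomorphism}, with the homography identities as immediate matrix algebra. Your explicit (3)$\Rightarrow$(2) step via $y:=A_1^{-1}x_1$ is a worthwhile addition, because the paper's proof of the second display in Proposition~\ref{prop:homographies} only establishes the forward inclusion and closedness, not the reverse inclusion.
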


\subsection*{Multidegree}
The \emph{multidegree} of the planar-anchored point multiview variety $\mathcal{M}^\Pi_{\mathcal{C}}$ is defined by the following function
\[
D_{\mathcal{M}^\Pi_{\mathcal{C}}}(d_1,\ldots,d_m) := 
\#\bigl(\mathcal{M}^\Pi_{\mathcal{C}} \cap (L^{(1)}_{d_1}\times \cdots \times L^{(m)}_{d_m})\bigr),
\]
for $(d_1,\ldots,d_m)\in\mathbb{N}^n$ such that $d_1+\cdots+d_m = \dim \mathcal{M}^\Pi_{\mathcal{C}}=2$, 
where for each~$1\le i\le m$ we denote by $L^{(i)}_d \subset \mathbb{P}^2$ a general linear subspace of codimension~$d$.

Following the fact that the function $\mathcal{M}^\Pi_{\mathcal{C}}$ is symmetric, meaning that 
\[
D_{\mathcal{M}^\Pi_{\mathcal{C}}}(d_1,\ldots,d_m) = 
D_{\mathcal{M}^\Pi_{\mathcal{C}}}(d_{\sigma(1)},\ldots,d_{\sigma(m)})
\]
for any permutation $\sigma$ on $m$ elements, we only have to determine the following two values:
$D_{\mathcal{M}^\Pi_{\mathcal{C}}}(2,0,\ldots,0)$ and 
$D_{\mathcal{M}^\Pi_{\mathcal{C}}}(1,1,0,\ldots,0)$.

\begin{theorem}
Up to permutation, the multidegree of the generic planar-anchored point multiview variety $\mathcal{M}^\Pi_{\mathcal{C}}$ is given by
\[
D_{\mathcal{M}^\Pi_{\mathcal{C}}}(2,0,\ldots,0)=1, 
\qquad 
D_{\mathcal{M}^\Pi_{\mathcal{C}}}(1,1,0,\ldots,0)=1.
\]
\end{theorem}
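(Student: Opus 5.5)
We compute both numbers directly from the parametrization of Proposition~\ref{bireg.isomorphism}. There, $\mathcal{M}^\Pi_{\mathcal{C}}$ is the image of the closed embedding
\[
\PP^2 \longrightarrow (\PP^2)^m,\qquad y\longmapsto (A_1y,\dots,A_my),
\]
with every $A_i\in GL_3$. Intersecting $\mathcal{M}^\Pi_{\mathcal{C}}$ with a product of general linear subspaces therefore amounts to counting the points $y\in\PP^2$ whose images satisfy the corresponding linear conditions. By the symmetry noted above, it is enough to treat the two index patterns $(2,0,\dots,0)$ and $(1,1,0,\dots,0)$.

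\textbf{Case $(2,0,\dots,0)$.} Here $L^{(1)}_2$ is a general point $p\in\PP^2$, and all other factors are the whole of $\PP^2$. The intersection consists of the $y$ with $A_1y\sim p$. Since $A_1$ is invertible, the only solution is $y=A_1^{-1}p$. So the intersection is exactly one point, and $D(2,0,\dots,0)=1$.

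\textbf{Case $(1,1,0,\dots,0)$.} Here $L^{(1)}_1$ and $L^{(2)}_1$ are general lines $\{\ell_1^\top x=0\}$ and $\{\ell_2^\top x=0\}$. The conditions on $y$ are
\[
(A_1^\top\ell_1)^\top y=0,\qquad (A_2^\top\ell_2)^\top y=0.
\]
Since $A_1$ and $A_2$ are invertible, $A_1^\top\ell_1$ and $A_2^\top\ell_2$ are nonzero covectors. For general $\ell_1,\ell_2$ they are linearly independent: for fixed $\ell_1$, dependence forces $\ell_2$ to be proportional to $A_2^{-\top}A_1^\top\ell_1$, which is a proper closed condition. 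Two independent lines in $\PP^2$ meet in exactly one point, so $D(1,1,0,\dots,0)=1$.

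\textbf{Multiplicities.} The main, though mild, obstacle is checking that the intersections are transverse, so that each point counts with multiplicity one. Because $\mathcal{M}^\Pi_{\mathcal{C}}\cong\PP^2$ via this embedding, transversality on $\mathcal{M}^\Pi_{\mathcal{C}}$ is the same as transversality of the pulled-back linear conditions on $\PP^2$. In the first case these are the two independent linear forms cutting out $A_1^{-1}p$; in the second they are the two independent lines above. Both intersections are reduced, so no excess or nonreduced contributions appear, and generality of the linear spaces (Bertini/Kleiman) justifies the choice of general subspaces.

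Finally, the argument never uses genericity of $\mathcal{C}$; it only needs $c_i\notin\Pi$, which makes every $A_i$ invertible. So the multidegree equals $1$ in both cases, and by symmetry it is identically $1$. This matches the fact that the embedding is given by the line bundle $\mathcal O(1,\dots,1)$ restricted to $\PP^2$, where each factor pulls back to $\mathcal O_{\PP^2}(1)$ and $H^2=1$.
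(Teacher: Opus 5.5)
Your proof is correct and follows essentially the paper's argument, written in coordinates. Pulling back the point $p$ and the lines $\ell_1,\ell_2$ through $A_i = C_iU_\Pi$ is exactly the paper's step of back-projecting them to a line or to planes in $\PP^3$ and intersecting with $\Pi$. Two additions go beyond what the paper makes explicit: your check that the intersections are reduced, so each point counts with multiplicity one, and your remark that only $c_i\notin\Pi$ is needed, not genericity of $\mathcal{C}$.
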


\begin{proof}
Let us write $p=(p_1,\ldots,p_m)$ for a point $p\in \mathcal{M}^\Pi_{\mathcal{C}}$. For the first value~$D_{\mathcal{M}^\Pi_{\mathcal{C}}}(2,0,\ldots,0)$, let us fix a generic point $p$ on the first image, which determines the back-projected line~$L$. We can see that the generic back-projected line $L$ meets generically the plane~$\Pi$ in just one point $X\in \mathbb{P}^3$. So there is only one point~$X$, one element when counting, implying the multidegree~$D_{\mathcal{M}^\Pi_{\mathcal{C}}}(2,0,\ldots,0)=1$.

Consider the second value $D_{\mathcal{M}^\Pi_{\mathcal{C}}}(1,1,0,\ldots,0)$. 
We fix two generic image lines,~$\ell_1,\ell_2$, each containing $p_1,p_2$ respectively, on the initial two images. The image lines $\ell_i$ determine two back-projected planes, $H_1, H_2$. When considering the intersection of these two generic planes with $\Pi$, we observe that, generically, their intersection consists of a single point. Consequently, the count equals one, that is,
\(
D_{\mathcal{M}^\Pi_{\mathcal{C}}}(1,1,0,\ldots,0)=1.
\)
\end{proof}
In particular, a correspondence arising from a point on $\Pi$ is uniquely determined (up to scale) by any one of its image points.

\section{The Euclidean Distance Problem}

Let \(\mathcal{V} \subset \RR^N\) be an affine chart of a planar-planar-anchored point multiview variety and let \(u \in \RR^N\) represent noisy image data. The Euclidean distance problem is
\begin{equation}
\min_{x \in \mathcal{V}} \|x - u\|^2. \label{eq:EDdeg_problem}
\end{equation}

The number of complex critical points of this optimization problem is called the \emph{Euclidean distance degree}(EDdeg) of \(\mathcal{V}\). This invariant measures the algebraic complexity of fitting noisy data to the planar model and serves as a proxy for the difficulty of triangulation via exact algebraic methods.

\begin{figure}
    \centering
    \includegraphics[width=0.52\linewidth]{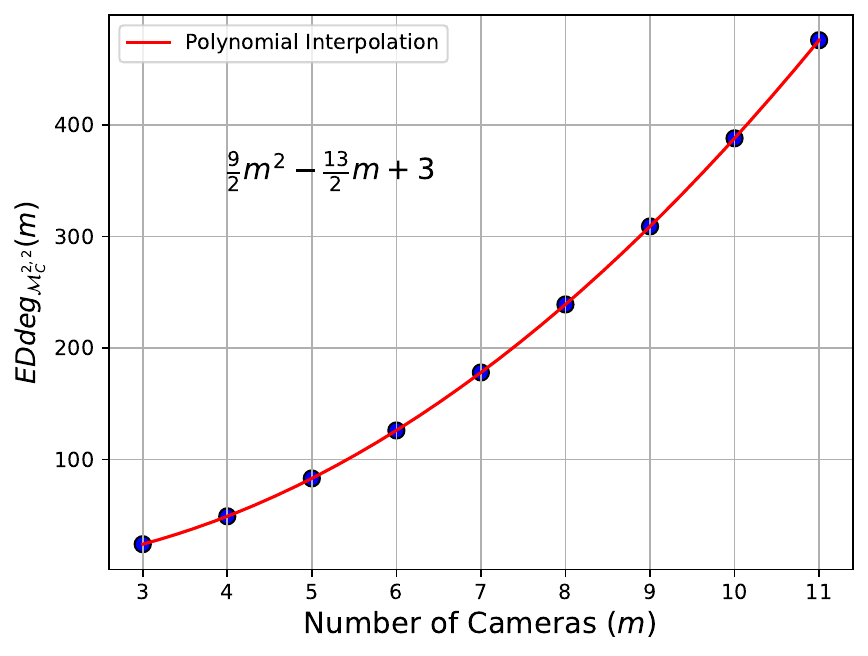}
    \caption{Computed EDdeg values for the affine planar-anchored point multiview variety in \texttt{HomotopyContinuation.jl}~\cite{HC.jl} as a function of the number of cameras, together with a polynomial fit.}
    \label{fig:placeholder}
\end{figure}
Let $\Pi \subset \mathbb{P}^3$ be a fixed plane and $m \geq 2$ for the planar-anchored point multiview variety $\mathcal{M}^\Pi_{\mathcal{C}}$. We now restate the main theorem from the introduction.
\begin{theorem}\label{EDdeg_plane-thm}
    Let $\mathcal{C}$ be a generic arrangement of $m\geq 2$ cameras, then the Euclidean distance degree of the affine planar-anchored point multiview variety~$M^\Pi_{\mathcal{C}}$~is
    \[
    EDdeg(M^\Pi_{\mathcal{C}})= \frac{9}{2}m^2-\frac{13}{2}m+3.
    \]
\end{theorem}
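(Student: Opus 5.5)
The plan is to compute the ED degree as a signed topological Euler characteristic. By Proposition~\ref{bireg.isomorphism}, $\mathcal M^\Pi_{\mathcal C}\cong\PP^2$ via $y\mapsto(A_1y,\dots,A_my)$. The affine chart $M^\Pi_{\mathcal C}\subset(\CC^2)^m$ is obtained by requiring that no image point lies on the line at infinity. It is therefore isomorphic to
\[
X:=\PP^2\setminus(\ell_1\cup\dots\cup\ell_m),\qquad \ell_i:=\{L_i(y)=0\},
\]
where $L_i(y)=(A_iy)_3$ is the third row of $A_iy$. For generic cameras the $\ell_i$ form a generic arrangement of $m$ lines. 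Write $N_i(y)$ for the numerator quadric of $\|\pi(A_iy)-u_i\|^2$, so that $N_i(y)=(a_{i1}y-u_{i1}L_i)^2+(a_{i2}y-u_{i2}L_i)^2$, where $a_{ik}$ denotes the $k$-th row of $A_i$ and $\pi$ is dehomogenization. The ED objective pulled back to $X$ is then
\[
\sum_i N_i/L_i^2=F/\prod_j L_j^2,\qquad F:=\sum_i N_i\prod_{j\ne i}L_j^2.
\]
This is a smooth closed affine surface, so I would invoke the Maxim--Rodriguez--Wang theorem: for generic data $u$,
\[
\mathrm{EDdeg}=(-1)^{2}\chi\bigl(X\setminus Q\bigr),\qquad Q:=\{F=0\}\subset\PP^2,\ \deg Q=2m.
\]
Here $Q$ is the closure of the isotropic quadric $\sum_i\|x_i-u_i\|^2=0$ restricted to $X$.

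The first step is elementary. A generic arrangement of $m$ lines has union with $\chi=2m-\binom m2$, so
\[
\chi(X)=3-2m+\binom m2 .
\]
This reduces the theorem to showing
\[
\chi(Q\cap X)=\chi(X)-\bigl(\tfrac92m^2-\tfrac{13}2m+3\bigr)=-4m^2+4m .
\]

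The second step is to understand how $Q$ meets the lines. Restricting $F$ to $\ell_i$ gives $N_i\prod_{j\ne i}L_j^2$. Hence $Q\cap\ell_i$ consists of two kinds of points. The first kind are the $\binom m2$ vertices $\ell_i\cap\ell_j$, where every term of $F$ vanishes to order $\ge2$, so $Q$ is singular there. The second kind are the two zeros of $N_i|_{\ell_i}=(a_{i1}y)^2+(a_{i2}y)^2$. Because the cameras are calibrated, these are exactly the preimages of the two circular points of image $i$; they are independent of $u$, and $Q$ is tangent to $\ell_i$ there. I would then show that for generic $u$ and generic calibrated cameras:
\begin{enumerate}
\item $Q$ is smooth away from the vertices, by a Bertini argument in the linear system spanned by varying $u$;
\item at each vertex $Q$ has a node of explicitly computable type, obtained from the local expansion $F\approx N_iL_j^2+N_jL_i^2$ near $L_i=L_j=0$;
\item the behaviour at the circular-point preimages is explicit.
\end{enumerate}
With these singularity types, $\chi(Q)$ follows from the genus--degree formula for a degree-$2m$ plane curve corrected by the $\delta$-invariants and branch counts at the singular points. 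Removing the finitely many points of $Q\cap(\ell_1\cup\dots\cup\ell_m)$ then gives $\chi(Q\cap X)$.

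The main obstacle is this local analysis. It requires determining the exact singularity type of $Q$ at the $\binom m2$ vertices (multiplicity, number of branches, $\delta$-invariant), and checking that the tangency of $Q$ with $\ell_i$ at the circular-point preimages does not create further singularities. This non-genericity, coming from calibration, is precisely why the generic formula in terms of the multidegree (both values $1$) does not apply. I would first compute in local coordinates at a vertex with $m=2$, where the target value $\mathrm{EDdeg}=7$ serves as a check, then argue that the local picture at each vertex depends only on the two lines through it. As a fallback, one can count critical points directly via a Lagrange/parametric count on $\PP^2$, namely the solutions of $\nabla_y(F/\prod L_j^2)=0$ off the lines. This count is obtained as the intersection of two curves of degree $\sim 2m+\dots$, minus the excess contributions at the vertices and circular points, and it should confirm the polynomial $\tfrac92m^2-\tfrac{13}2m+3$.
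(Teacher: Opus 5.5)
Your route is the paper's: the four ingredients $\chi(\PP^2)=3$, $\chi(\bigcup_i\ell_i)=2m-\binom m2$, $\#(Q\cap\bigcup_i\ell_i)=2m+\binom m2$ and $\chi(Q)=-4m^2+6m+\binom m2$ are exactly the four terms of its inclusion--exclusion formula, and your target $\chi(Q\cap X)=-4m^2+4m$ is their combination. The gap is that you stop at what you call the main obstacle. That local analysis is the paper's singular-strata and Milnor-fibre step, so leaving it open leaves the proof unfinished, and two of your expectations about it are wrong. It closes quickly from your own expansion. $N_i(p_{ij})=(a_{i1}p_{ij})^2+(a_{i2}p_{ij})^2$ does not depend on $u$, and it is nonzero for generic cameras, since it vanishes only if $p_{ij}$ is a circular-point preimage on $\ell_i$. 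So in local coordinates $(L_i,L_j)$ the quadratic part of $F$ is the nonzero constant $\prod_{k\ne i,j}L_k(p_{ij})^2$ times $N_i(p_{ij})L_j^2+N_j(p_{ij})L_i^2$. This form is nondegenerate, so each vertex is an ordinary node ($\mu=\delta=1$, two branches). Hence $\chi(Q)=6m-4m^2+\binom m2$, and removing the $2m+\binom m2$ points on the lines gives $-4m^2+4m$.

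First error: $Q$ is \emph{not} tangent to $\ell_i$ at a circular-point preimage $c$. The restriction $N_i|_{\ell_i}$ is a sum of squares of two independent linear forms, so it splits into two distinct linear factors and $F|_{\ell_i}$ has simple zeros there. The local intersection numbers $1+1+2(m-1)$ already exhaust B\'ezout's $2m$, so tangency is impossible. Explicitly, $\nabla F(c)$ is a nonzero multiple of $a_{i2}\pm\sqrt{-1}\,a_{i1}$ modulo $a_{i3}$, using $a_{i2}c\neq 0$ and invertibility of $A_i$. Hence $Q$ is smooth at $c$ and transverse to $\ell_i$. The special feature is that these $2m$ points are fixed independently of $u$ (this comes from the image metric, not from calibration) and that $Q$ is forced through the vertices with multiplicity two. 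Bertini cannot settle this point for you, because for the family in $u$ the base locus is exactly the vertices together with these $2m$ points. You should also keep the term $\beta_0\prod_jL_j^2$ in $F$, for two reasons: without it the admissible members $F_u$ form a nonlinear subfamily of the span, to which Bertini does not apply, and it is required by the hypothesis of the Maxim--Rodriguez--Wang theorem you invoke. The term vanishes to order two along every $\ell_i$ and changes nothing at infinity. Second error: your $m=2$ check value should be $18-13+3=8$, not $7$. Your own decomposition gives $\chi(X)=0$ and $\chi(Q\cap X)=-8$, and $8$ is also the solution count of the paper's two-view solver.
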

Computed ED degree values (Figure~\ref{fig:placeholder}) exhibit a quadratic trend, motivating the formula in Theorem~\ref{EDdeg_plane-thm}. The proof of this theorem is provided in the supplementary material, which we refer to from now on as SM.
\section{Solvers}\label{sec:solvers}

In this section, we introduce practical solvers for triangulating 3D points constrained to a plane. We consider the cases with $m=2$ and $m=3$ views.

Let us have $m \geq 2$ cameras $\mathcal{C}=(C_1,...,C_m)$, a plane $\Pi$, and a set of 2D observations $(u_1,...,u_m) \in (\PP^2)^m$.
To optimally triangulate a 3D point $X \in \Pi$ from the 2D observations $u = (u_1,...,u_m)$, we search for the point $x \in (\PP^2)^m$ on the planar-anchored point multiview variety $\mathcal{M}_\mathcal{C}^\Pi$ (Definition~\ref{def:planar_anchored_variety}), minimizing the Euclidean distance \eqref{eq:EDdeg_problem}. According to Theorem~\ref{EDdeg_plane-thm}, this problem can be transformed into a polynomial system with $\frac{9}{2}m^2 - \frac{13}{2}m + 3$ solutions. In particular, we get $8$ solutions for $m=2$ and $24$ solutions for $m=3$.

According to Proposition~\ref{prop:homographies}, the points on the variety $\mathcal{M}_\mathcal{C}^\Pi$ are in a one-to-one correspondence with the projections $x_1 \in \PP^2$ onto the first camera $C_1$. Moreover, there exist homographies $H_{j,1}, j \in \{2,\ldots,m\}$, such that $x_j \sim H_{j,1} x_1$. Homography $H_{j,1}$ can be calculated from the cameras $C_1, C_j$ and the plane $\Pi$.

Let $q: \RR^3 \dashrightarrow \RR^2, [x \ y \ z ]^T \longmapsto [\frac{x}{z} \ \frac{y}{z}]$ be a map sending homogeneous coordinates onto the corresponding affine coordinates.
Then, the affine chart of $\mathcal{M}_\mathcal{C}^\Pi$ can be parameterized (almost everywhere) with two variables $a,b \in \RR$, such that $x_1(a,b) = [a \ b \ 1]^T$, and
\begin{equation}
    x(a,b) = (x_1(a,b), q(H_{2,1}x_1(a,b)),\ldots,q(H_{m,1}x_1(a,b))).
\end{equation}
This allows us to reformulate problem \eqref{eq:EDdeg_problem} as $\min_{a,b \in \RR} f(a,b)$, where
\begin{equation}
    f(a,b) = \|x_1(a,b) - u_1\|^2 + \sum_{j=2}^m \|H_{j,1}x_1(a,b) - u_j\|^2. 
\end{equation}
If $a^*,b^*$ satisfy $(a,b) = \arg\min_{a,b \in \RR} f(a,b)$, then there holds
\begin{equation} \label{eq:derivative_constraint}
    \frac{\partial f(a,b)}{\partial a}(a^*,b^*) = \frac{\partial f(a,b)}{\partial b}(a^*,b^*) = 0.
\end{equation}
These equations are two independent constraints on the variables $a,b$. Although they are, in general, rational functions, we can transform them into polynomials by multiplying them by their common factors. These polynomial equations for~$m=2$ and $m=3$ are shown in the SM.

We consider two approaches to solve these equations: homotopy continuation implementation MiNuS \cite{fabbri2020minus} (HC) and a Gr\"obner basis automatic generator \cite{DBLP:conf/cvpr/LarssonAO17} (GB). To achieve smaller elimination templates in the GB solvers, we generate them with the original equations \eqref{eq:derivative_constraint} and with additional equations derived from~\eqref{eq:derivative_constraint}. These additional equations are given in the SM. All generated solvers have the predicted number of solutions.
Here, we use the shorthand $2$v and $3$v to denote the cases with $m=2$ and $m=3$ views, respectively.
The runtimes are 172.9 $\mu s$ for 2v HC, 6.9 $\mu s$ for 2v GB, 1341.6 $\mu s$ for 3v HC, and~\mbox{82.9 $\mu s$} for~3v~GB.

To evaluate the stability of the proposed solvers, we sampled random cameras~$C_1,\ldots,C_m$ and random 2D points $u_1,...,u_m$, and a plane $\Pi$, and passed these into the solvers. In the perfect case, the returned solutions $a^*,b^*$ would have the derivative \eqref{eq:derivative_constraint} equal to zero. In reality, this number will be nonzero, and its magnitude will reflect the stability of the solver. Therefore, we calculate the derivatives at each solution $a^*,b^*$ \eqref{eq:derivative_constraint}, and report the \textit{derivative error}~$\epsilon_{d} = \max\left( abs\left(\frac{\partial f(a,b)}{\partial a}\right), abs\left(\frac{\partial f(a,b)}{\partial b}\right) \right)$ in Figure~\ref{fig:stability_tests}. \begin{figure}[H]
    \centering
    \begin{tikzpicture}

\begin{axis}[%
xmode=log,
width=0.4\columnwidth,
height=0.25\columnwidth,
at={(0in,0.389in)},
scale only axis,
xmin=1e-20,
xmax=1000000,
xlabel style={font=\color{white!15!black}},
xlabel={$\epsilon_d$},
xtick={1e-15,1e-10,1e-5,1e-0},
ymin=0,
ymax=15000,
ymode=normal,
yminorticks=true,
axis lines = left,
axis background/.style={fill=white},
title style={font=\bfseries},
title={Derivative error},
ylabel={Frequency},
legend style={at={(0.5,1.3)}, anchor=south, legend cell align=left, align=left, draw=white!15!black, font=\normalsize,nodes={scale=0.9, transform shape}},
legend columns=5
]

%%%%%%%%%%%%%%%%%%%%%%%%%%%%%%%%%%%%%%%%%%%%%%%%%%%%%%%%%%%%%%%%%

%%%%%%%%%%%%%%%%%%%%%%%%%%%%%%%%%%%%%%%%%%%%%%%%%%%%%%%%%%%%%%%%%
\addplot [color=red,line width=1.6pt, mark options={solid, red}]
  table[row sep=crcr]{%
1.28825e-20 28 \\
2.13796e-20 0 \\
3.54813e-20 1 \\
5.88844e-20 2 \\
9.77237e-20 1 \\
1.62181e-19 2 \\
2.69153e-19 1 \\
4.46684e-19 4 \\
7.4131e-19 8 \\
1.23027e-18 1 \\
2.04174e-18 15 \\
3.38844e-18 21 \\
5.62341e-18 40 \\
9.33254e-18 24 \\
1.54882e-17 77 \\
2.5704e-17 136 \\
4.2658e-17 110 \\
7.07946e-17 320 \\
1.1749e-16 406 \\
1.94984e-16 689 \\
3.23594e-16 740 \\
5.37032e-16 1276 \\
8.91251e-16 1404 \\
1.47911e-15 1764 \\
2.45471e-15 2016 \\
4.0738e-15 2317 \\
6.76083e-15 2584 \\
1.12202e-14 2947 \\
1.86209e-14 3172 \\
3.0903e-14 3675 \\
5.12861e-14 4124 \\
8.51138e-14 4594 \\
1.41254e-13 5285 \\
2.34423e-13 5889 \\
3.89045e-13 6550 \\
6.45654e-13 6973 \\
1.07152e-12 7452 \\
1.77828e-12 8033 \\
2.95121e-12 8361 \\
4.89779e-12 8710 \\
8.12831e-12 8728 \\
1.34896e-11 8649 \\
2.23872e-11 8670 \\
3.71535e-11 8519 \\
6.16595e-11 8126 \\
1.02329e-10 7943 \\
1.69824e-10 7594 \\
2.81838e-10 7016 \\
4.67735e-10 6642 \\
7.76247e-10 6170 \\
1.28825e-09 5693 \\
2.13796e-09 5436 \\
3.54813e-09 4881 \\
5.88844e-09 4408 \\
9.77237e-09 4085 \\
1.62181e-08 3552 \\
2.69153e-08 3159 \\
4.46684e-08 2933 \\
7.4131e-08 2535 \\
1.23027e-07 2301 \\
2.04174e-07 1929 \\
3.38844e-07 1638 \\
5.62341e-07 1456 \\
9.33254e-07 1250 \\
1.54882e-06 1014 \\
2.5704e-06 897 \\
4.2658e-06 709 \\
7.07946e-06 639 \\
1.1749e-05 501 \\
1.94984e-05 440 \\
3.23594e-05 326 \\
5.37032e-05 287 \\
8.91251e-05 239 \\
0.000147911 222 \\
0.000245471 159 \\
0.00040738 144 \\
0.000676083 112 \\
0.00112202 90 \\
0.00186209 80 \\
0.0030903 73 \\
0.00512861 63 \\
0.00851138 45 \\
0.0141254 47 \\
0.0234423 33 \\
0.0389045 28 \\
0.0645654 27 \\
0.107152 16 \\
0.177828 10 \\
0.295121 19 \\
0.489779 20 \\
0.812831 14 \\
1.34896 5 \\
2.23872 12 \\
3.71535 11 \\
6.16595 9 \\
10.2329 8 \\
16.9824 1 \\
28.1838 5 \\
46.7735 2 \\
77.6247 26 \\
};
\addlegendentry{2v HC}

\addplot [color=green,line width=1.6pt, mark options={solid, red}]
  table[row sep=crcr]{%
1.28825e-20 31 \\
2.13796e-20 0 \\
3.54813e-20 0 \\
5.88844e-20 1 \\
9.77237e-20 1 \\
1.62181e-19 2 \\
2.69153e-19 4 \\
4.46684e-19 6 \\
7.4131e-19 21 \\
1.23027e-18 8 \\
2.04174e-18 22 \\
3.38844e-18 36 \\
5.62341e-18 74 \\
9.33254e-18 85 \\
1.54882e-17 167 \\
2.5704e-17 257 \\
4.2658e-17 323 \\
7.07946e-17 670 \\
1.1749e-16 877 \\
1.94984e-16 1529 \\
3.23594e-16 1967 \\
5.37032e-16 3043 \\
8.91251e-16 4184 \\
1.47911e-15 5864 \\
2.45471e-15 7284 \\
4.0738e-15 8978 \\
6.76083e-15 10727 \\
1.12202e-14 11766 \\
1.86209e-14 12461 \\
3.0903e-14 12303 \\
5.12861e-14 11974 \\
8.51138e-14 11689 \\
1.41254e-13 11308 \\
2.34423e-13 10571 \\
3.89045e-13 9963 \\
6.45654e-13 9174 \\
1.07152e-12 8296 \\
1.77828e-12 7540 \\
2.95121e-12 6701 \\
4.89779e-12 6098 \\
8.12831e-12 5477 \\
1.34896e-11 4853 \\
2.23872e-11 4323 \\
3.71535e-11 3735 \\
6.16595e-11 3265 \\
1.02329e-10 2924 \\
1.69824e-10 2592 \\
2.81838e-10 2108 \\
4.67735e-10 1815 \\
7.76247e-10 1671 \\
1.28825e-09 1409 \\
2.13796e-09 1219 \\
3.54813e-09 1142 \\
5.88844e-09 879 \\
9.77237e-09 791 \\
1.62181e-08 697 \\
2.69153e-08 584 \\
4.46684e-08 562 \\
7.4131e-08 449 \\
1.23027e-07 374 \\
2.04174e-07 314 \\
3.38844e-07 284 \\
5.62341e-07 231 \\
9.33254e-07 217 \\
1.54882e-06 176 \\
2.5704e-06 129 \\
4.2658e-06 116 \\
7.07946e-06 109 \\
1.1749e-05 94 \\
1.94984e-05 86 \\
3.23594e-05 73 \\
5.37032e-05 63 \\
8.91251e-05 48 \\
0.000147911 48 \\
0.000245471 43 \\
0.00040738 47 \\
0.000676083 27 \\
0.00112202 21 \\
0.00186209 29 \\
0.0030903 11 \\
0.00512861 15 \\
0.00851138 15 \\
0.0141254 14 \\
0.0234423 12 \\
0.0389045 5 \\
0.0645654 3 \\
0.107152 6 \\
0.177828 5 \\
0.295121 3 \\
0.489779 5 \\
0.812831 5 \\
1.34896 3 \\
2.23872 2 \\
3.71535 2 \\
6.16595 3 \\
10.2329 1 \\
16.9824 1 \\
28.1838 1 \\
46.7735 3 \\
77.6247 8 \\
};
\addlegendentry{2v GB}

\addplot [color=blue,line width=1.6pt, mark options={solid, red}]
  table[row sep=crcr]{%
1.28825e-20 35 \\
2.13796e-20 0 \\
3.54813e-20 1 \\
5.88844e-20 3 \\
9.77237e-20 0 \\
1.62181e-19 0 \\
2.69153e-19 2 \\
4.46684e-19 5 \\
7.4131e-19 6 \\
1.23027e-18 5 \\
2.04174e-18 13 \\
3.38844e-18 20 \\
5.62341e-18 35 \\
9.33254e-18 37 \\
1.54882e-17 62 \\
2.5704e-17 113 \\
4.2658e-17 80 \\
7.07946e-17 272 \\
1.1749e-16 368 \\
1.94984e-16 639 \\
3.23594e-16 680 \\
5.37032e-16 1200 \\
8.91251e-16 1468 \\
1.47911e-15 1931 \\
2.45471e-15 2217 \\
4.0738e-15 2617 \\
6.76083e-15 3214 \\
1.12202e-14 3505 \\
1.86209e-14 3586 \\
3.0903e-14 3734 \\
5.12861e-14 3919 \\
8.51138e-14 4321 \\
1.41254e-13 4520 \\
2.34423e-13 4833 \\
3.89045e-13 5011 \\
6.45654e-13 5549 \\
1.07152e-12 5836 \\
1.77828e-12 6394 \\
2.95121e-12 6946 \\
4.89779e-12 7514 \\
8.12831e-12 8069 \\
1.34896e-11 8831 \\
2.23872e-11 9600 \\
3.71535e-11 10537 \\
6.16595e-11 11279 \\
1.02329e-10 12017 \\
1.69824e-10 12747 \\
2.81838e-10 13548 \\
4.67735e-10 13686 \\
7.76247e-10 14477 \\
1.28825e-09 14929 \\
2.13796e-09 14813 \\
3.54813e-09 15102 \\
5.88844e-09 14905 \\
9.77237e-09 14476 \\
1.62181e-08 14077 \\
2.69153e-08 13535 \\
4.46684e-08 12894 \\
7.4131e-08 11835 \\
1.23027e-07 11265 \\
2.04174e-07 10102 \\
3.38844e-07 9430 \\
5.62341e-07 8279 \\
9.33254e-07 7432 \\
1.54882e-06 6610 \\
2.5704e-06 5848 \\
4.2658e-06 4992 \\
7.07946e-06 4264 \\
1.1749e-05 3818 \\
1.94984e-05 3271 \\
3.23594e-05 2980 \\
5.37032e-05 2369 \\
8.91251e-05 2041 \\
0.000147911 1725 \\
0.000245471 1617 \\
0.00040738 1227 \\
0.000676083 1070 \\
0.00112202 886 \\
0.00186209 771 \\
0.0030903 651 \\
0.00512861 557 \\
0.00851138 485 \\
0.0141254 378 \\
0.0234423 348 \\
0.0389045 291 \\
0.0645654 245 \\
0.107152 210 \\
0.177828 173 \\
0.295121 145 \\
0.489779 138 \\
0.812831 127 \\
1.34896 83 \\
2.23872 83 \\
3.71535 62 \\
6.16595 71 \\
10.2329 45 \\
16.9824 43 \\
28.1838 45 \\
46.7735 42 \\
77.6247 487 \\
};
\addlegendentry{3v HC}

\addplot [color=cyan,line width=1.6pt, mark options={solid, red}]
  table[row sep=crcr]{%
1.28875e-16 119 \\
2.14047e-16 57 \\
3.55506e-16 106 \\
5.90455e-16 163 \\
9.80676e-16 227 \\
1.62879e-15 338 \\
2.70522e-15 510 \\
4.49306e-15 728 \\
7.46245e-15 1072 \\
1.23943e-14 1415 \\
2.05854e-14 1791 \\
3.41899e-14 2388 \\
5.67855e-14 2889 \\
9.4314e-14 3576 \\
1.56645e-13 4069 \\
2.60168e-13 4685 \\
4.32109e-13 5289 \\
7.17682e-13 5835 \\
1.19199e-12 6474 \\
1.97975e-12 7038 \\
3.28813e-12 7606 \\
5.4612e-12 7884 \\
9.07041e-12 8558 \\
1.50649e-11 8665 \\
2.5021e-11 8997 \\
4.1557e-11 9161 \\
6.90213e-11 9403 \\
1.14636e-10 9452 \\
1.90397e-10 9429 \\
3.16228e-10 9650 \\
5.25217e-10 9769 \\
8.72324e-10 9554 \\
1.44883e-09 9195 \\
2.40633e-09 9261 \\
3.99664e-09 9274 \\
6.63795e-09 8880 \\
1.10249e-08 8662 \\
1.8311e-08 8412 \\
3.04124e-08 8154 \\
5.05114e-08 7988 \\
8.38936e-08 7750 \\
1.39337e-07 7711 \\
2.31423e-07 7584 \\
3.84367e-07 7136 \\
6.38388e-07 6725 \\
1.06029e-06 6370 \\
1.76101e-06 6312 \\
2.92484e-06 6026 \\
4.85781e-06 5897 \\
8.06826e-06 5568 \\
1.34004e-05 5289 \\
2.22565e-05 5072 \\
3.69655e-05 4874 \\
6.13954e-05 4781 \\
0.000101971 4382 \\
0.000169361 4295 \\
0.000281289 4136 \\
0.000467188 3812 \\
0.000775944 3752 \\
0.00128875 3504 \\
0.00214047 3450 \\
0.00355506 3266 \\
0.00590455 3214 \\
0.00980676 3004 \\
0.0162879 2821 \\
0.0270522 2790 \\
0.0449306 2736 \\
0.0746245 2627 \\
0.123943 2523 \\
0.205854 2526 \\
0.341899 2599 \\
0.567855 2607 \\
0.94314 2730 \\
1.56645 2854 \\
2.60168 2984 \\
4.32109 2976 \\
7.17682 3289 \\
11.9199 3229 \\
19.7975 3086 \\
32.8813 3053 \\
54.612 2899 \\
90.7041 2686 \\
150.649 2497 \\
250.21 2362 \\
415.57 2208 \\
690.213 2024 \\
1146.36 1820 \\
1903.97 1706 \\
3162.28 1554 \\
5252.17 1400 \\
8723.24 1352 \\
14488.3 1195 \\
24063.3 1099 \\
39966.4 1023 \\
66379.5 976 \\
110249 882 \\
183110 816 \\
304124 761 \\
505114 717 \\
838936 650 \\
1.39337e+06 574 \\
2.31423e+06 559 \\
3.84367e+06 515 \\
6.38388e+06 441 \\
1.06029e+07 389 \\
1.76101e+07 326 \\
2.92484e+07 331 \\
4.85781e+07 298 \\
8.06826e+07 267 \\
1.34004e+08 210 \\
2.22565e+08 228 \\
3.69655e+08 230 \\
6.13954e+08 234 \\
1.01971e+09 217 \\
1.69361e+09 181 \\
2.81289e+09 186 \\
4.67188e+09 164 \\
7.75944e+09 4554 \\
};
\addlegendentry{3v GB}

%\legend{}

\end{axis}

\end{tikzpicture}%
    %\vspace{-0.4cm}
    \caption{\textit{Solver stability.} Histogram of the derivative errors $\epsilon_d$, calculated from $10^5$ randomly sampled problems using the solvers from Section~\ref{sec:solvers}. HC stands for homotopy continuation, GB stands for Gr\"obner basis. See Section~\ref{sec:solvers} for details. }
    \label{fig:stability_tests}
\end{figure}
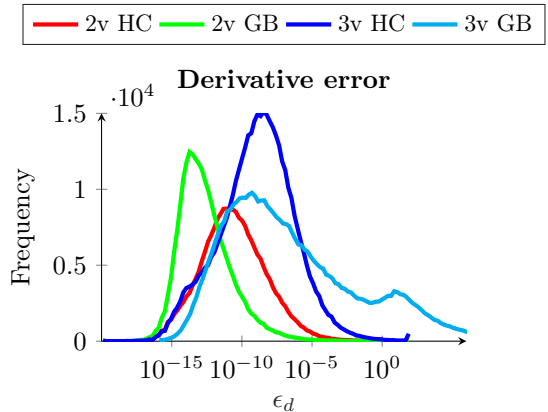

 This experiment demonstrates that both solvers for $2$ views and the HC solver for $3$ views are stable. While the GB solver for $3$ views is also mostly stable, it fails in some cases. Since its runtime is much lower than the runtime of its HC counterpart, we propose to use it first, and if it fails to find a solution with a reasonably low reprojection error, we use the HC solver as a fallback option.
 
\section{Numerical Experiments}
In this section, we present numerical experiments for the triangulation of planar-anchored point configurations. All code and experimental details are available in the SM, to which we refer for further information.

\subsection{Numerical experiments on synthetic data} \label{sec:synthetic}
Our goal is to reconstruct point configurations from point correspondences that are constrained to a single plane across \( m \) views. Below we describe two natural triangulation approaches for this problem, given a camera configuration \( \mathcal{C} \):

-\textbf{(P).UC}: Triangulate each point correspondence independently by fitting it to the unconstrained point multiview variety, that is, by finding the closest point correspondence in the point multiview variety.

-\textbf{(P).C}: Triangulate all point correspondences jointly by enforcing the planar constraint, \ i.e., by fitting the data to the planar-anchored multiview variety~\( \mathcal{M}^\Pi_{\mathcal{C}} \).

While (P).C explicitly incorporates the planar incidence constraint into the triangulation, (P).UC ignores it. As a consequence, the triangulation produced by (P).UC does not preserve the underlying planarity of the reconstructed points, whereas (P).C does.

Our numerical experiments are implemented in \texttt{HomotopyContinuation.jl}~\cite{HC.jl} using \texttt{Julia}~\cite{bezanson2012julia}.

\textbf{Evaluating error.} In each experiment, we first generate a random projective plane $\Pi$ and select $n$ points $X_i \in \Pi$. These points are projected by randomly generated cameras, producing point correspondences in $(\mathbb{R}^2)^m$. For a fixed noise level $\varepsilon = 10^{-12}$, independent noise vectors $\eta_i$ of magnitude $\varepsilon$ are added to each image coordinate.

The triangulation methods then reconstruct points $W_i \in \mathbb{R}^3$, constrained to lie on a plane, that best fit the noisy correspondences.
Reconstruction accuracy is evaluated using the logarithm of the averaged relative triangulation error
\[
\mathcal{E}_{tr}=\log_{10}\!\Bigg(\frac{1}{n\,\varepsilon}\sum_i\|W_i-X_i\|\Bigg),
\]
where $\|W_i-X_i\|$ denotes the Euclidean distance. This quantity measures the amplification of image noise during triangulation.

\textbf{Algorithms.} The pseudocode for the planar-constrained triangulation method is given in Algorithm~\ref{alg:P1}. In the first step, noisy image correspondences are generated by adding randomly sampled noise vectors $\eta$ of magnitude~\mbox{$\varepsilon=10^{-12}$} to the image coordinates. The triangulation step then reconstructs the planar points by solving a closest-point problem on the planar-anchored point multiview variety. This optimization problem \ref{eq:EDdeg_problem} is solved by computing the zeros of a system of polynomial critical equations using the \texttt{solve()} function in \texttt{HomotopyContinuation.jl}~\cite{HC.jl}. Finally, the reconstructed points are compared with the ground-truth points, and the logarithmic average triangulation error is reported.

The unconstrained triangulation method follows the same procedure, except that the reconstruction is performed over $\mathbb{R}^3$ rather than being restricted to a plane.
\begin{algorithm}
\caption{Method (P).C: Planar-constrained triangulation}
\label{alg:P1}
\textbf{Input:} Camera arrangement \( \mathcal{C} = (C_1,\dots,C_m) \), points \( X_1,\dots,X_n \subset \Pi \subset \mathbb{R}^3 \) \\
\textbf{Output:} Logarithm of the average relative error
\begin{algorithmic}[1]
\FOR{$j = 1$ to $m$}
  \FOR{$i = 1$ to $n$}
    \STATE $x_{i,j} \leftarrow C_j(X_i) + \eta(\varepsilon)$
  \ENDFOR
\ENDFOR
\FOR{$i = 1$ to $n$}
  \STATE $W_i \leftarrow
  \arg\min\limits_{X \in \Pi}
  \sum\limits_{j=1}^m \| x_{i,j} - C_j(X) \|^2$
\ENDFOR
\STATE $\mathcal{E}_{tr}\leftarrow \log_{10}\!\left( \frac{1}{n\varepsilon}
\sum\limits_{i=1}^n \|W_i - X_i\| \right)$
\RETURN $\mathcal{E}_{tr}$
\end{algorithmic}
\end{algorithm}

\textbf{Numerical results.} The main results of the numerical experiments are shown in Figure~\ref{fig:plane_histograms_234}.\begin{figure}[H]
    \centering
    
    % -------- Row 1 (Errors) --------
    \begin{subfigure}{0.32\linewidth}
        \centering
        \includegraphics[width=\linewidth]{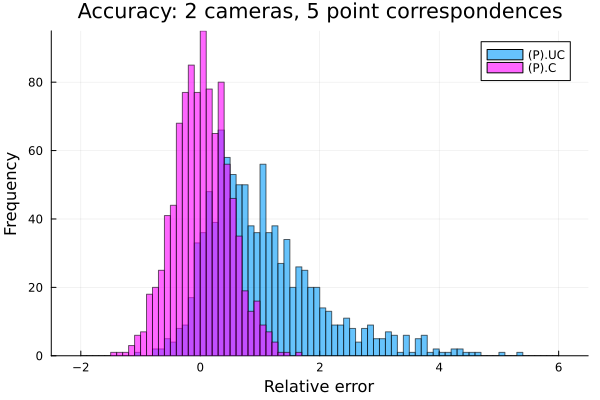}
    \end{subfigure}
    \hfill
    \begin{subfigure}{0.32\linewidth}
        \centering
        \includegraphics[width=\linewidth]{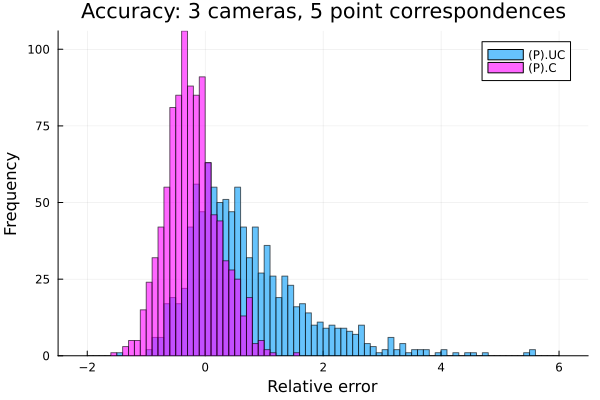}
    \end{subfigure}
    \hfill
    \begin{subfigure}{0.32\linewidth}
        \centering
        \includegraphics[width=\linewidth]{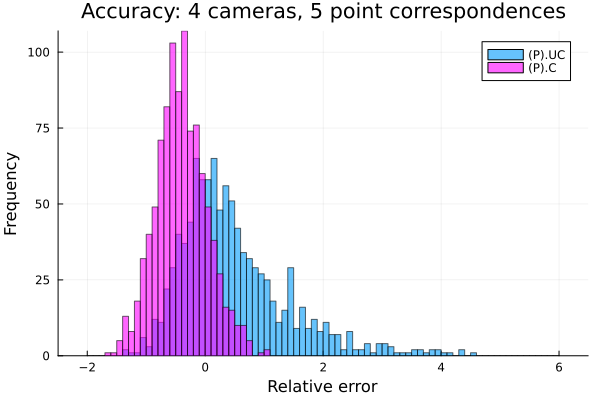}
    \end{subfigure}

    \vspace{0.5em}

    % -------- Row 2 (Runtime) --------
    \begin{subfigure}{0.32\linewidth}
        \centering
        \includegraphics[width=\linewidth]{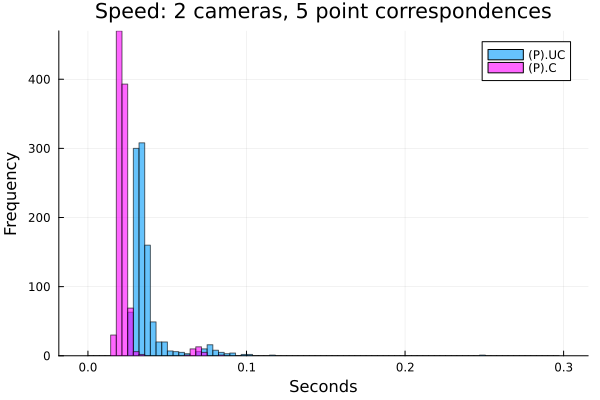}
    \end{subfigure}
    \hfill
    \begin{subfigure}{0.32\linewidth}
        \centering
        \includegraphics[width=\linewidth]{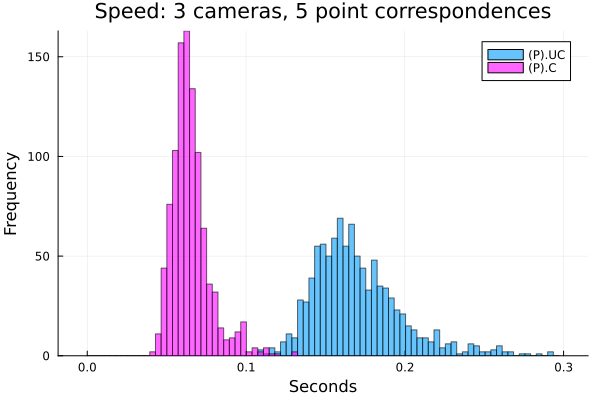}
    \end{subfigure}
    \hfill
    \begin{subfigure}{0.32\linewidth}
        \centering
        \includegraphics[width=\linewidth]{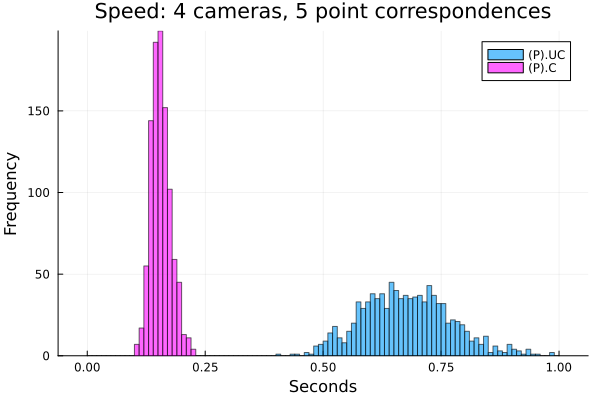}
    \end{subfigure}

    \caption{Triangulation of \( n = 5 \) coplanar correspondences for \( m = 2,3,4 \) cameras over 1000 iterations. Top: logarithmic average relative error. Bottom: runtime distributions.}
    \label{fig:plane_histograms_234}
\end{figure}
We compare the performance of the triangulation methods for $n=5$ and varying numbers of cameras $m$. For each experiment, we record the logarithmic average relative error and the runtime, and repeat each simulation 1000 times. The results are summarized in histograms generated using \texttt{Plots.jl}~\cite{bezanson2012julia}. In our \texttt{HomotopyContinuation.jl} implementation, the planar-constrained method (P).C clearly outperforms the unconstrained approach (P).UC in both accuracy and runtime. Enforcing the planar constraint results in smaller triangulation errors and faster computations.

The promising results observed on synthetic data suggest that explicitly exploiting planar constraints can substantially improve triangulation performance. This observation motivated the development of a specialized solver described in Section~\ref{sec:solvers}, whose numerical performance is evaluated in the following subsection.
\subsection{Numerical experiments on real data}\label{sec:real}
Here, we propose an experiment to evaluate the accuracy of the solvers proposed in Section~\ref{sec:solvers} on real data.
Similarly to Section~\ref{sec:synthetic}, we compare the unconstrained (P).UC, constrained (P).C, and hybrid (P).H strategies:
\begin{itemize}
    \item \textbf{(P).UC}: Triangulate each point correspondence using the standard unanchored two-view \cite{DBLP:conf/caip/HartleyS95} or three-view triangulation solver \cite{DBLP:conf/accv/ByrodJA07}.
    \item \textbf{(P).C}: Triangulate each point correspondence with the Gr\"obner basis solver (Section~\ref{sec:solvers}). If the smallest reprojection error is above $5$ pixels, retriangulate the point with the Homotopy continuation solver. 
    \item \textbf{(P).H}: Apply the (P).C strategy first. If the reprojection error is above $5$ pixels, retriangulate the point with (P).UC.
\end{itemize}
To ensure a good stability of the (P).UC strategy, we implement the solvers \cite{DBLP:conf/caip/HartleyS95,DBLP:conf/accv/ByrodJA07} using homotopy continuation, reflecting that, in this section, we focus on the triangulation accuracy, rather than runtime.

The use of the HC solver in the (P).C strategy reflects the high failure rate of the GB solver. The goal of the (P).H strategy is to filter out points that are incorrectly assigned to the plane despite not lying on it.

We evaluate the solvers on the CAB sequence from the Lamar dataset \cite{sarlin2022lamar}. This is mainly an indoor sequence and contains a large number of planar surfaces. We use the sequence captured with the NavVis M6 trolley, which integrates a RGB camera with a laser scanner, which is used as a source of highly accurate ground truth 3D points in real-world units. We select images recorded by Camera 1, which faces forward and provides sufficient overlap between consecutive frames.

We evaluate our solvers with the following procedure
\begin{itemize}
    \item \textbf{Collecting data}: Take $m$ consecutive images, collect their poses $C_{1},\ldots,C_{m}$, and the point correspondences between them from the COLMAP model associated with the Lamar dataset \cite{sarlin2022lamar}.
    \item \textbf{Plane detection}: Use a monocular depth network \cite{depth_anything_v2} to predict depth, backproject the image points to 3D, and fit planes with RANSAC. We intentionally avoid using the GT laser depth to prevent it from influencing the triangulation process. Although the depth predictions of~\cite{depth_anything_v2} do not produce an accurate 3D geometry, they still provide reliable grouping of points on the same planar surface.
    \item \textbf{Finding plane parameterization $\Pi$}: Select one detected plane, and collect all point correspondences assigned to it, whose Sampson error~\cite{DBLP:books/cu/HZ2004} is below $2px$. Triangulate these points using (P).UC, and fit the plane into the 3D points using RANSAC.
    \item \textbf{Triangulation}: Triangulate all correspondences assigned to the plane using the (P).UC, (P).C, and (P).H strategies.
\end{itemize}
This procedure ensures, that both the plane $\Pi$ and the triangulated points are obtained using only the information available in the images, without using any GT information.

We run this procedure on $m$ tuples of consecutive images from the analyzed sequence, considering planes with at least $10$ correspondences, and measure the \textit{triangulation error} as the Euclidean distance between the triangulated point $X$ and the GT point $X_{GT}$, obtained from the laser scan of the scene:
\begin{equation}
    \epsilon_{tr} = \lVert X - X_{GT} \rVert_2.
\end{equation}

The results, presented in Figure~\ref{fig:real_tests}, demonstrate that, in general, the methods (P).C and (P).H, which use the coplanarity constraints, achieve more accurate triangulation than the unconstrained (P).UC approach. For instance, the main peak for $m=3$ occurs at about $4cm$ for (P).C but at $10cm$ for P.(UC). There is no significant difference between the (P).C and (P).H methods.

The experiments further show, that for $m=2$, all strategies have a non-negligible portion of failed triangulations, which is the highest in the case of (P).UC. This can be observed as the peak on the right side of the histogram. For $m=3$, the portion of failed triangulations is much smaller, but it is again larger for (P).UC than for the other approaches. These experiments demonstrate that the multi-view triangulation constrained to planes is a perspective technique, which has a chance to improve the triangulation accuracy in existing 3D reconstruction pipelines \cite{schoenberger2016sfm,Liu_2023_LIMAP}.

\begin{figure}
    \centering
    \begin{tikzpicture}

\begin{axis}[%
xmode=log,
width=0.38\columnwidth,
height=0.25\columnwidth,
at={(0in,0.389in)},
scale only axis,
xmin=1e-4,
xmax=100,
xlabel style={font=\color{white!15!black}},
xlabel={$\epsilon_{tr}$ (meters)},
xtick={0.001,0.01,0.1,1,10},
title={Triangulation error $m=2$},
ymin=0,
ymax=600,
ymode=normal,
yminorticks=true,
axis lines = left,
axis background/.style={fill=white},
title style={font=\bfseries},
ylabel={Frequency},
legend style={at={(1,1.33)}, anchor=south, legend cell align=left, align=left, draw=white!15!black, font=\normalsize,nodes={scale=0.9, transform shape}},
legend columns=5,
grid=major
]

%%%%%%%%%%%%%%%%%%%%%%%%%%%%%%%%%%%%%%%%%%%%%%%%%%%%%%%%%%%%%%%%%

%%%%%%%%%%%%%%%%%%%%%%%%%%%%%%%%%%%%%%%%%%%%%%%%%%%%%%%%%%%%%%%%%
\addplot [color=red,line width=1.6pt, mark options={solid, red}]
  table[row sep=crcr]{%
0.0001 0\\
0.0001148153621496883 0\\
0.00013182567385564074 0\\
0.00015135612484362088 1\\
0.00017378008287493763 1\\
0.00019952623149688788 3\\
0.00022908676527677723 3\\
0.00026302679918953814 0\\
0.0003019951720402016 1\\
0.0003467368504525317 1\\
0.00039810717055349735 1\\
0.0004570881896148752 1\\
0.0005248074602497723 7\\
0.0006025595860743581 3\\
0.0006918309709189362 4\\
0.0007943282347242813 10\\
0.0009120108393559096 15\\
0.0010471285480508996 8\\
0.001202264434617413 12\\
0.0013803842646028838 15\\
0.001584893192461114 21\\
0.0018197008586099826 33\\
0.0020892961308540386 29\\
0.00239883291901949 38\\
0.002754228703338166 51\\
0.0031622776601683794 45\\
0.003630780547701014 67\\
0.004168693834703355 64\\
0.00478630092322638 86\\
0.005495408738576248 86\\
0.00630957344480193 120\\
0.007244359600749898 98\\
0.008317637711026709 109\\
0.009549925860214359 136\\
0.01096478196143185 137\\
0.012589254117941675 144\\
0.01445439770745928 152\\
0.016595869074375595 189\\
0.019054607179632463 203\\
0.02187761623949552 253\\
0.025118864315095794 261\\
0.028840315031266057 293\\
0.03311311214825911 313\\
0.038018939632056124 338\\
0.04365158322401656 375\\
0.0501187233627272 417\\
0.057543993733715666 397\\
0.06606934480075957 401\\
0.07585775750291836 427\\
0.08709635899560805 416\\
0.1 448\\
0.1148153621496883 491\\
0.13182567385564073 469\\
0.1513561248436207 409\\
0.17378008287493746 443\\
0.1995262314968879 401\\
0.22908676527677724 354\\
0.26302679918953814 371\\
0.3019951720402016 364\\
0.34673685045253166 310\\
0.3981071705534969 318\\
0.4570881896148747 327\\
0.5248074602497723 276\\
0.6025595860743574 253\\
0.6918309709189363 215\\
0.7943282347242814 207\\
0.9120108393559097 184\\
1.0471285480508985 174\\
1.2022644346174132 164\\
1.3803842646028839 140\\
1.584893192461114 107\\
1.8197008586099825 110\\
2.089296130854041 104\\
2.39883291901949 84\\
2.754228703338163 85\\
3.1622776601683795 69\\
3.63078054770101 57\\
4.168693834703355 52\\
4.7863009232263805 39\\
5.4954087385762485 40\\
6.30957344480193 40\\
7.244359600749891 35\\
8.317637711026709 44\\
9.54992586021435 57\\
10.964781961431852 60\\
12.589254117941662 59\\
14.45439770745928 53\\
16.595869074375596 46\\
19.054607179632445 23\\
21.87761623949552 179\\
25.11886431509577 411\\
28.84031503126606 614\\
33.11311214825908 96\\
38.018939632056124 55\\
43.65158322401656 24\\
50.11872336272725 12\\
57.543993733715666 11\\
66.06934480075951 12\\
75.85775750291836 8\\
87.09635899560796 6\\
};
\addlegendentry{(P).UC}

\addplot [color=green,line width=1.6pt, mark options={solid, red}]
  table[row sep=crcr]{%
0.0001 0\\
0.0001148153621496883 0\\
0.00013182567385564074 0\\
0.00015135612484362088 0\\
0.00017378008287493763 0\\
0.00019952623149688788 0\\
0.00022908676527677723 1\\
0.00026302679918953814 0\\
0.0003019951720402016 1\\
0.0003467368504525317 2\\
0.00039810717055349735 2\\
0.0004570881896148752 1\\
0.0005248074602497723 4\\
0.0006025595860743581 5\\
0.0006918309709189362 5\\
0.0007943282347242813 4\\
0.0009120108393559096 16\\
0.0010471285480508996 14\\
0.001202264434617413 10\\
0.0013803842646028838 20\\
0.001584893192461114 26\\
0.0018197008586099826 24\\
0.0020892961308540386 36\\
0.00239883291901949 34\\
0.002754228703338166 50\\
0.0031622776601683794 54\\
0.003630780547701014 93\\
0.004168693834703355 85\\
0.00478630092322638 114\\
0.005495408738576248 105\\
0.00630957344480193 126\\
0.007244359600749898 129\\
0.008317637711026709 146\\
0.009549925860214359 174\\
0.01096478196143185 207\\
0.012589254117941675 215\\
0.01445439770745928 233\\
0.016595869074375595 248\\
0.019054607179632463 256\\
0.02187761623949552 312\\
0.025118864315095794 397\\
0.028840315031266057 375\\
0.03311311214825911 465\\
0.038018939632056124 512\\
0.04365158322401656 462\\
0.0501187233627272 474\\
0.057543993733715666 465\\
0.06606934480075957 480\\
0.07585775750291836 426\\
0.08709635899560805 474\\
0.1 500\\
0.1148153621496883 472\\
0.13182567385564073 512\\
0.1513561248436207 498\\
0.17378008287493746 454\\
0.1995262314968879 384\\
0.22908676527677724 393\\
0.26302679918953814 367\\
0.3019951720402016 332\\
0.34673685045253166 342\\
0.3981071705534969 284\\
0.4570881896148747 275\\
0.5248074602497723 222\\
0.6025595860743574 214\\
0.6918309709189363 173\\
0.7943282347242814 182\\
0.9120108393559097 151\\
1.0471285480508985 135\\
1.2022644346174132 141\\
1.3803842646028839 111\\
1.584893192461114 110\\
1.8197008586099825 81\\
2.089296130854041 89\\
2.39883291901949 73\\
2.754228703338163 63\\
3.1622776601683795 57\\
3.63078054770101 51\\
4.168693834703355 49\\
4.7863009232263805 35\\
5.4954087385762485 37\\
6.30957344480193 29\\
7.244359600749891 43\\
8.317637711026709 44\\
9.54992586021435 42\\
10.964781961431852 47\\
12.589254117941662 42\\
14.45439770745928 53\\
16.595869074375596 45\\
19.054607179632445 19\\
21.87761623949552 24\\
25.11886431509577 100\\
28.84031503126606 19\\
33.11311214825908 18\\
38.018939632056124 13\\
43.65158322401656 24\\
50.11872336272725 10\\
57.543993733715666 12\\
66.06934480075951 15\\
75.85775750291836 10\\
87.09635899560796 5\\
};
\addlegendentry{(P).C}

\addplot [color=blue,line width=1.6pt,dashed, mark options={solid, red}]
  table[row sep=crcr]{%
0.0001 0\\
0.0001148153621496883 0\\
0.00013182567385564074 0\\
0.00015135612484362088 0\\
0.00017378008287493763 0\\
0.00019952623149688788 1\\
0.00022908676527677723 1\\
0.00026302679918953814 0\\
0.0003019951720402016 1\\
0.0003467368504525317 2\\
0.00039810717055349735 2\\
0.0004570881896148752 1\\
0.0005248074602497723 5\\
0.0006025595860743581 5\\
0.0006918309709189362 6\\
0.0007943282347242813 4\\
0.0009120108393559096 17\\
0.0010471285480508996 14\\
0.001202264434617413 10\\
0.0013803842646028838 20\\
0.001584893192461114 27\\
0.0018197008586099826 27\\
0.0020892961308540386 37\\
0.00239883291901949 37\\
0.002754228703338166 53\\
0.0031622776601683794 55\\
0.003630780547701014 96\\
0.004168693834703355 83\\
0.00478630092322638 115\\
0.005495408738576248 107\\
0.00630957344480193 132\\
0.007244359600749898 134\\
0.008317637711026709 158\\
0.009549925860214359 180\\
0.01096478196143185 215\\
0.012589254117941675 221\\
0.01445439770745928 236\\
0.016595869074375595 255\\
0.019054607179632463 261\\
0.02187761623949552 323\\
0.025118864315095794 404\\
0.028840315031266057 380\\
0.03311311214825911 472\\
0.038018939632056124 516\\
0.04365158322401656 460\\
0.0501187233627272 479\\
0.057543993733715666 470\\
0.06606934480075957 482\\
0.07585775750291836 424\\
0.08709635899560805 476\\
0.1 487\\
0.1148153621496883 476\\
0.13182567385564073 493\\
0.1513561248436207 492\\
0.17378008287493746 442\\
0.1995262314968879 392\\
0.22908676527677724 380\\
0.26302679918953814 371\\
0.3019951720402016 332\\
0.34673685045253166 329\\
0.3981071705534969 273\\
0.4570881896148747 243\\
0.5248074602497723 183\\
0.6025595860743574 194\\
0.6918309709189363 160\\
0.7943282347242814 166\\
0.9120108393559097 150\\
1.0471285480508985 125\\
1.2022644346174132 134\\
1.3803842646028839 104\\
1.584893192461114 95\\
1.8197008586099825 76\\
2.089296130854041 80\\
2.39883291901949 68\\
2.754228703338163 62\\
3.1622776601683795 48\\
3.63078054770101 42\\
4.168693834703355 45\\
4.7863009232263805 27\\
5.4954087385762485 35\\
6.30957344480193 26\\
7.244359600749891 38\\
8.317637711026709 37\\
9.54992586021435 54\\
10.964781961431852 56\\
12.589254117941662 47\\
14.45439770745928 54\\
16.595869074375596 44\\
19.054607179632445 21\\
21.87761623949552 35\\
25.11886431509577 142\\
28.84031503126606 121\\
33.11311214825908 38\\
38.018939632056124 8\\
43.65158322401656 16\\
50.11872336272725 7\\
57.543993733715666 10\\
66.06934480075951 14\\
75.85775750291836 11\\
87.09635899560796 5\\
};
\addlegendentry{(P).H}

%\legend{}

\end{axis}

\input{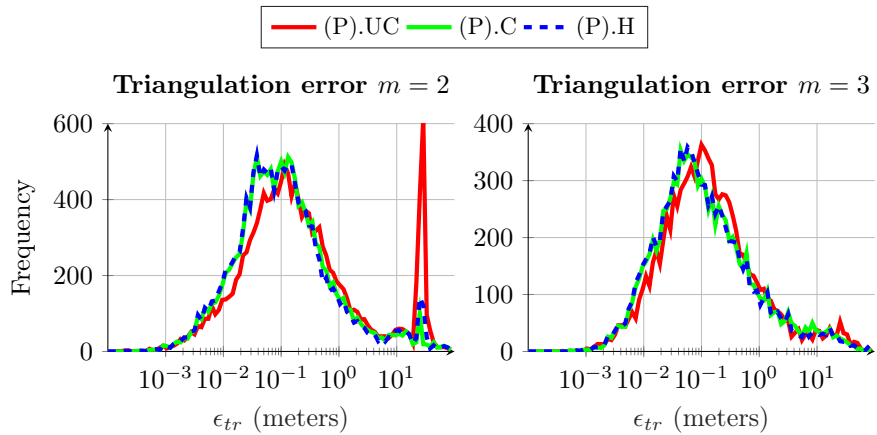}

\end{tikzpicture}%
    \caption{\textit{Real World Experiment.} Histogram of the Euclidean triangulation errors. \textit{left}: $m=2$, \textit{right}: $m=3$. On Lamar \cite{sarlin2022lamar}. In meters. (P).UC: unconstrained solvers \cite{DBLP:conf/caip/HartleyS95,DBLP:conf/accv/ByrodJA07}, (P).C: solvers constrained to plane (Section~\ref{sec:solvers}), (P).H: combinations of both methods. See Section~\ref{sec:real} for details.}
    \label{fig:real_tests}
\end{figure}

\section{Conclusions}
In this paper, we have studied the problem of multi-view triangulation constrained to a plane. We have studied the properties of the planar-anchored point multiview varieties $\mathcal{M}_{\mathcal{C}}^{\Pi}$, in particular we prove that its ED degree equals $\frac{9}{2}m^2 - \frac{13}{2}m + 3$ for every number of views $m$. We have then built minimal solvers for the cases~$m=2$ and $m=3$, and demonstrated, on synthetic and real data, that, on planar scenes, our approach achieves a higher accuracy than the classical unconstrained triangulation.

In future work, this study could be extended to multi-view triangulation of lines, and the method could be used within 3D reconstruction pipelines, such as COLMAP~\cite{schoenberger2016sfm} and Limap~\cite{Liu_2023_LIMAP}. Furthermore, as the approximations \cite{DBLP:conf/3dim/KukelovaPB13,DBLP:conf/cvpr/HedborgRF14} of the optimal unconstrained three-view triangulation \cite{DBLP:conf/accv/ByrodJA07} achieve a superior runtime with a comparable stability, another possible future direction could investigate the possibilities of similar approximations for the planar-anchored triangulation. The code will be made publicly available.

\printbibliography

\newpage
\appendix

\begin{center}
{\Large \textbf{Triangulation of Points Constrained to a Plane}}\\[2ex]
{\large Supplementary Material\\[2ex]
}
\vspace*{2ex}
\end{center}

\noindent
In this appendix, we provide the proof of the main theorem establishing the formula for the ED degree of the planar-anchored point multiview variety. We also give further details on the experiments performed on real data.
\section{Planar-Anchored Point Multiview Varieties}
Let $\Pi\subset\PP^3$ be a plane and let
$\mathcal C=(C_1,\dots,C_m)$ be a camera arrangement with no center on
$\Pi$.  The planar-anchored point multiview variety
\[
\mathcal M^\Pi_{\mathcal C}\subset(\PP^2)^m
\]
is the Zariski closure of the image of the map
\[
\Phi^\Pi_{\mathcal C}:\Pi\to(\PP^2)^m,
\qquad
X\mapsto (C_1X,\dots,C_mX).
\]
By Proposition~\ref{bireg.isomorphism}, we have $\mathcal M^\Pi_{\mathcal C}\cong\PP^2$.
\subsection{Preliminaries}

In this section we recall several notions that will be used in the proof of
Theorem~\ref{EDdeg_plane-thm}.  Throughout, we work over the complex numbers.

\paragraph{Euler characteristic.}
For a complex algebraic variety $V$, we denote by $\chi(V)$ its topological Euler characteristic. The Euler characteristic is a topological invariant that is preserved under homeomorphisms, and in particular under isomorphisms of algebraic varieties \cite{hatcher2005algebraic}. 
It satisfies the additivity property: if $U \subset V$ is a closed subvariety, then
\[
\chi(V) = \chi(U) + \chi(V \setminus U).
\]
More generally, for a finite union $V=\bigcup_i V_i$, the inclusion--exclusion principle yields
\[
\chi\!\left(\bigcup_i V_i\right)
=
\sum_i \chi(V_i)
-\sum_{i<j}\chi(V_i\cap V_j)
+\cdots .
\]

\begin{lemma}\cite{may1999concise}
The Euler characteristic of projective space satisfies
\[
\chi(\PP^n)=n+1 .
\]
\end{lemma}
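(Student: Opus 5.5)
We must prove that $\chi(\PP^n)=n+1$. The plan is to use the standard cell decomposition of projective space together with the additivity of the Euler characteristic recalled above. Write $\PP^n=\mathbb{C}^n\sqcup \PP^{n-1}$, where $\PP^{n-1}=\{x_0=0\}$ is the hyperplane at infinity; it is a closed subvariety, and its complement is the affine chart $\{x_0\neq 0\}\cong\mathbb{C}^n$. By the additivity property,
\[
\chi(\PP^n)=\chi(\PP^{n-1})+\chi(\mathbb{C}^n).
\]

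The key input is the value of $\chi(\mathbb{C}^n)$. For the compactly supported Euler characteristic (which agrees with the topological one for complex algebraic varieties, and is what makes the additivity statement valid), we have $\chi(\mathbb{C}^n)=1$: since $\mathbb{C}^n\cong\RR^{2n}$ is an open cell of even real dimension, $\chi_c(\RR^{2n})=(-1)^{2n}=1$. We also have directly that $\mathbb{C}^n$ is contractible, so the ordinary Euler characteristic is $1$ as well. The base case is $\chi(\PP^0)=\chi(\text{pt})=1$. Induction on $n$ then gives $\chi(\PP^n)=\chi(\PP^{n-1})+1=n+1$.

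As an independent check that avoids the subtlety of which Euler characteristic additivity applies to, we can compute the CW structure directly. The decomposition $\PP^n=\mathbb{C}^0\sqcup\mathbb{C}^1\sqcup\cdots\sqcup\mathbb{C}^n$ gives a CW complex with exactly one cell in each even real dimension $0,2,\dots,2n$. All cellular boundary maps therefore vanish, so $H_{2k}(\PP^n;\mathbb{Z})=\mathbb{Z}$ for $0\le k\le n$ and all other homology groups are zero. Hence $\chi(\PP^n)=\sum_{k=0}^n(-1)^{2k}=n+1$.

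The only real obstacle is justifying the additivity along an open/closed decomposition. This holds for compactly supported Euler characteristics, and these coincide with the usual ones for complex varieties; alternatively, the cellular-homology argument sidesteps the issue entirely. I would present the cellular argument as the main proof and record the additivity computation as a remark consistent with the preliminaries.
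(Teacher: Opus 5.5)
Your proof is correct and follows essentially the same route as the paper, which gives no argument of its own and only cites May's textbook; there the result is the standard consequence of the CW structure on $\PP^n$ with one cell in each even real dimension $0,2,\dots,2n$, which is your main argument. Your additivity computation via $\PP^n=\CC^n\sqcup\PP^{n-1}$ is also sound, since you correctly note that open/closed additivity rests on $\chi=\chi_c$ for complex varieties, and it matches the additivity principle the paper states just before the lemma.
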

\paragraph{Linear systems of divisors.}

We briefly recall linear systems of divisors and the basepoint-free property, which will be used in the application of Bertini’s theorem. 
Let $V$ be a smooth projective variety and $D$ a divisor on $V$. 
The complete linear system associated with $D$ is
\[
|D| := \mathbb{P}\bigl(H^0(V,\mathcal O_V(D))\bigr),
\]
whose points correspond to effective divisors linearly equivalent to $D$. A point~\mbox{$x\in V$} is a \emph{base point} if every divisor in the system contains $x$; the set of such points is the \emph{base locus}. 
The system is \emph{basepoint-free} if this locus is empty, equivalently if the global sections of $\mathcal O_V(D)$ generate the line bundle at every point of $V$. 

We will use the following consequence of Bertini’s theorem.

\begin{theorem}[Smooth divisors and Bertini's theorem.]
Let $V$ be a smooth complex variety and let $\Gamma$ be a positive-dimensional basepoint-free linear system on $V$. 
Then a general divisor $D' \in \Gamma$ is smooth.
\end{theorem}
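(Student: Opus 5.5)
The plan is to reduce the statement to generic smoothness (the algebraic Sard theorem) in characteristic zero, applied to a universal family of divisors. Write $\Gamma=\PP(W)$ with $W\subseteq H^0(V,\mathcal O_V(D))$ of dimension $N+1\ge 2$, which is possible because $\Gamma$ is positive-dimensional. Since $\Gamma$ is basepoint-free, the sections in $W$ have no common zero. Hence they define a morphism
\[
\varphi:V\longrightarrow \PP^N=\PP(W^\vee),
\]
and every divisor $D'\in\Gamma$ is of the form $\varphi^{-1}(H)$ for a hyperplane $H\in(\PP^N)^\vee\cong\Gamma$. So it suffices to show that $\varphi^{-1}(H)$ is smooth for $H$ in a dense open subset of $(\PP^N)^\vee$.

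First I will form the incidence variety
\[
Z=\{(x,H)\in V\times(\PP^N)^\vee:\ \varphi(x)\in H\}.
\]
I will check that $Z$ is smooth by looking at the first projection $Z\to V$. Its fiber over $x$ is the set of hyperplanes through the point $\varphi(x)$, a hyperplane $\PP^{N-1}\subset(\PP^N)^\vee$. Locally on $V$, $Z$ is cut out by one linear equation in the coordinates of $H$ whose coefficients, namely the coordinates of $\varphi(x)$, never vanish simultaneously. Therefore $Z\to V$ is a Zariski-locally trivial $\PP^{N-1}$-bundle. As $V$ is smooth, so is $Z$, of dimension $\dim V+N-1$ on each component.

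Next I will consider the second projection $\pi:Z\to(\PP^N)^\vee$, whose fiber over $H$ is identified with $\varphi^{-1}(H)=D'_H$, scheme-theoretically. We work over $\CC$ and $Z$ is smooth, so generic smoothness gives a dense open $U\subseteq(\PP^N)^\vee$ such that $\pi^{-1}(U)\to U$ is smooth. If the image of $\pi$ is not dominant, instead take $U$ to be the complement of the closure of the image, where fibers are empty. In either case, every fiber $\pi^{-1}(H)$ with $H\in U$ is a smooth scheme, possibly empty, which is exactly smoothness of the general $D'\in\Gamma$.

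The main obstacle is the scheme-theoretic identification of $\pi^{-1}(H)$ with the divisor $D'_H$. Generic smoothness controls the scheme fiber, and smoothness of the divisor must mean that its local equation $s_H$ has nonvanishing differential wherever it vanishes. I will verify this in local coordinates: near $x$, trivialize $\mathcal O_V(D)$, so that $Z$ is defined by $\sum_i h_i s_i(x)=0$. Restricting to a fixed $H$ recovers exactly the equation $s_H=0$, so the fibers agree as schemes. If one prefers to avoid invoking generic smoothness, I would fall back to a dimension count. Over the locus where $\varphi$ has differential of rank $r$ at $x$, the set of $H$ for which $D'_H$ is singular at $x$ consists of the hyperplanes containing the embedded tangent space of $\varphi$ at $x$, which has codimension $r+1$. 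The weak point of this route is the locus where $r$ is small, and handling it essentially requires generic smoothness again (for $\varphi$ restricted to strata). I would therefore commit to the generic smoothness argument.
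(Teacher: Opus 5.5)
The paper gives no proof of this statement. It recalls it as a standard consequence of Bertini's theorem and uses it once, for $V=\mathcal M^\Pi_{\mathcal C}\cong\PP^2$ and $\Gamma=|\mathcal O_{\PP^2}(2m)|$.

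Your argument is correct and is the standard incidence-correspondence proof:
\begin{itemize}
\item $Z$ is the projectivization of the rank-$N$ kernel of the evaluation map $W\otimes\mathcal O_V\to\mathcal O_V(D)$. This map is surjective exactly because $\Gamma$ is basepoint-free, so $Z$ is a $\PP^{N-1}$-bundle over $V$ and hence smooth and irreducible.
\item Generic smoothness of $\pi\colon Z\to(\PP^N)^\vee$ in characteristic zero then makes the general fibre smooth.
\item You correctly identify these fibres scheme-theoretically with the divisors $D'_H$.
\end{itemize}
To make that identification airtight, define $Z$ from the start as the zero scheme of the section $\sum_i h_i s_i$ of $\mathrm{pr}_1^*\mathcal O_V(D)\otimes\mathrm{pr}_2^*\mathcal O(1)$ on $V\times(\PP^N)^\vee$. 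Then the scheme you show to be smooth via the local trivialization is literally the one whose fibres are the $D'_H$. Your final paragraph does this in effect.

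As for what each approach buys: in the paper's single application, $\varphi$ is the degree-$2m$ Veronese embedding. So $d\varphi$ is injective everywhere, and the dimension count you sketch as a fallback already suffices. That count is the classical Bertini theorem for hyperplane sections of a smooth projective variety (Hartshorne, Thm.~II.8.18), valid in any characteristic. Your generic-smoothness route is what the statement as formulated requires: an arbitrary basepoint-free system on a possibly non-projective $V$. There, as you correctly note, the count fails on the locus where $d\varphi$ drops rank, and characteristic zero is genuinely used.
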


\paragraph{Whitney stratification and correction of the Euler characteristic.}

A Whitney stratification decomposes a variety into smooth locally closed strata satisfying certain regularity conditions. 
To study divisors along the strata one uses Milnor fibers. 
If $f$ is a local defining equation of a divisor $V$ near a point $x$, the Milnor fiber is
\[
F_x = B_{\varepsilon,x}\cap\{f=t\},
\]
for sufficiently small $\varepsilon$ and $t\neq 0$. 
The Euler characteristic of $F_x$ is independent of the choice of $f$ and constant along the stratum containing $x$.

\begin{theorem}\cite{ParusinskiPragacz1995}\cite[Theorem 10.4.4]{MaximIntersection} 
\label{thm: correction of euler charachteristic of singular divisor}
Let $V$ be a smooth complex projective variety and let $U\subset V$ be a divisor admitting a Whitney stratification
\[
U=\bigsqcup_{S\in\mathscr S} S .
\]
Let $W\subset V$ be a smooth divisor linearly equivalent to $U$ meeting $U$ transversely with respect to this stratification. 
Then
\[\label{eq:C17}      
\chi(W)-\chi(U)
=
\sum_{S\in\mathscr S}\mu_S\,\chi(S\setminus W),
\]
where $\mu_S$ denotes the Euler characteristic of the reduced cohomology of the Milnor fiber at a point of the stratum $S$.
\end{theorem}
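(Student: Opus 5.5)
The statement is the Parusiński--Pragacz formula, which the paper quotes from the literature, so the plan is to reconstruct its standard proof via a pencil and nearby cycles rather than anything specific to multiview geometry. Let $s_U, s_W \in H^0(V,\mathcal O_V(U))$ be sections cutting out $U$ and $W$; these exist because $U$ and $W$ are linearly equivalent. Consider the pencil
\[
D_{[\lambda:\mu]}=\{\lambda s_U+\mu s_W=0\},
\]
whose base locus is $B=U\cap W$. Let $\pi:\widetilde V\to V$ be the blow-up along $B$, and let $f:\widetilde V\to\PP^1$ be the induced morphism.

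\textbf{Structure of $f$.} The fibre of $f$ over $[1:0]$ maps isomorphically onto $U$, and the fibre over $[0:1]$ is $W$. Transversality of $W$ to the Whitney strata of $U$ implies that $B$ is a locally complete intersection with reasonable singularities, and that near $\pi^{-1}(B)$ the map $f$ is locally a product. The key steps are then as follows.

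\textbf{Step 1 (nearby fibre is $W$).} The critical values of $f$ form a finite set, since $W$ is smooth and the general member is smooth by Bertini. Hence over the complement of this set $f$ is a locally trivial $C^\infty$ fibration. In particular the Milnor fibre $F$ of $f$ at $[1:0]$ (the fibre over a nearby point) is diffeomorphic to $W$, so $\chi(F)=\chi(W)$.

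\textbf{Step 2 (nearby cycles).} Apply the specialization formula
\[
\chi(F)=\int_U \chi(F_x)\,d\chi ,
\]
where $F_x$ is the local Milnor fibre of $f$ at $x\in f^{-1}([1:0])\cong U$. Writing $\chi(F_x)=1+\chi(\widetilde H^*(F_x))$ gives
\[
\chi(F)-\chi(U)=\int_U \chi\bigl(\widetilde H^*(F_x)\bigr)\,d\chi .
\]

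\textbf{Step 3 (localizing off $W$).} For $x\in U\setminus W$, the blow-up is an isomorphism near $x$ and $f$ is locally $s_U/s_W$. Its Milnor fibre is therefore that of a local equation of $U$, whose reduced Euler characteristic is $\mu_S$ for the stratum $S\ni x$; this is constant along $S$. For $x\in B=U\cap W$, I would show the local Milnor fibre is contractible, so its reduced Euler characteristic is $0$. Summing over strata then yields $\sum_S \mu_S\,\chi(S\setminus W)$, and combining with Step 1 gives the stated formula.

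\textbf{Main obstacle.} The hardest part is the vanishing of the contributions at points of $\pi^{-1}(B)$. My first attempt would use the transversality of $W$ to each stratum $S$ to choose local coordinates in which $s_W$ is a coordinate function transverse to $S$. In these coordinates $f$ on the exceptional divisor becomes a product of a Milnor fibration with a disc in the exceptional direction, so $F_x$ is contractible. If this direct argument becomes too messy, the fallback is to cite the Thom $a_f$-condition for a stratified pencil, which gives the same local triviality. Justifying the integral formula in Step 2 also relies on the constructibility of $x\mapsto\chi(F_x)$, and I would take this from standard results on nearby-cycle functors.
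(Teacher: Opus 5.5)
The paper gives no proof of this statement. It quotes it from Parusi\'nski--Pragacz and Maxim's book and then applies it with $V=\PP^2$, $U=D_Q^\Pi$, $W=D^{\Pi'}$. Your proposal therefore does not follow or depart from an argument in the text. It reconstructs the standard pencil and nearby-cycles proof, and its skeleton is sound:
\begin{itemize}
\item The total space $\widetilde V=\{\lambda s_U+\mu s_W=0\}\subset V\times\PP^1$ is singular only at points $(x,[1:0])$ with $x\in W$ a singular point of $U$.
\item The singular members of the pencil form a closed set that misses $W$, hence a finite set. Ehresmann over the connected set of regular values then identifies the fibre near $[1:0]$ with $W$.
\item The specialization formula for the proper map $f$ is correct.
\item So is the identification of $F_x$ with the Milnor fibre of $U$ for $x\in U\setminus W$.
\item The final bookkeeping, including the sign, comes out exactly as stated.
\end{itemize}

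The step that needs a genuine extra input is the vanishing at base points, and your first attempt would not supply it. Take $p=(x,[1:0])$ with $x\in S\cap W$ and $s_W=y_1$. The Milnor fibre is $F_p=\{y\in B_\varepsilon(x):\ s_U(y)+\delta y_1=0\}$. Its topology is governed by the nearby level sets $\{s_U=c\}$, not by $U$ itself. Choosing coordinates with $s_W=y_1$ does not make $s_U$ independent of $y_1$, and Whitney regularity of $U$ only controls limits of tangent spaces to strata of $U$.

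What you need is Thom's $a_f$ condition for $s_U$ along $S$. Combined with $T_xS\not\subset T_xW$, it forces $ds_U\wedge ds_W\neq 0$ near $x$ away from $\operatorname{Sing}(s_U)$, i.e.\ the relative polar locus is empty. Hence $\Phi=(s_U,s_W)$ is a locally trivial fibration over a small polydisc minus $\{s_U=0\}$. Now fibre $F_p$ by $s_W$ over a small disc. The part over the punctured disc has Euler characteristic $0$, and the central fibre $U\cap W\cap B_\varepsilon$ is contractible, so $\chi(F_p)=1$. In fact $F_p$ retracts onto that central fibre; equivalently, Thom's second isotopy lemma trivializes $s_U$ along $s_W$.

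You should say explicitly that $a_f$ is not an extra hypothesis. Every Whitney stratification of the zero set of a holomorphic function satisfies $a_f$, by Brian\c{c}on--Maisonobe--Merle. You cannot instead refine the stratification to get $a_f$, because $W$ need not be transverse to the new strata.

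With that citation in place, your proof is complete. Compared with the paper's bare citation, it shows exactly where transversality enters: it kills the vanishing cycles along $U\cap W$. That is precisely what must be checked when the theorem is applied. For example, transversality to a $0$-dimensional stratum is only possible if $W$ misses it, which is why $\chi(S_{ij}\setminus D^{\Pi'})=1$ in the paper's use.
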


\subsection{Proof of the Main Theorem: ED Degree of the Planar-Anchored Point Multiview Variety}

We now prove Theorem~\ref{EDdeg_plane-thm} from the main body of the paper.

\begin{maintheorem}[Theorem~\ref{EDdeg_plane-thm}]\label{maintheorem}
Let $\mathcal{C}$ be a generic arrangement of $m \ge 2$ cameras. Then the Euclidean distance degree of the affine planar-anchored point multiview variety is
\[
\operatorname{EDdeg}\!\left(M_\mathcal{C}^{\Pi}\right)
=
\frac{9}{2}m^{2}
-
\frac{13}{2}m
+
3 .
\]
\end{maintheorem}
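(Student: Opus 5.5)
We will use the Euler characteristic formula for ED degrees of Maxim, Rodriguez and Wang. For a smooth closed affine variety $X\subset\CC^N$ and generic data $u\in\CC^N$, $\beta\in\CC$, it states
\[
\operatorname{EDdeg}(X)=(-1)^{\dim X}\,\chi\bigl(X\setminus\{x:\ \textstyle\sum_k (x_k-u_k)^2=\beta\}\bigr).
\]
Here $\dim X=2$, so the sign is $+$. We first identify the affine variety $M^\Pi_{\mathcal C}$ concretely. By Proposition~\ref{bireg.isomorphism} we have $\mathcal M^\Pi_{\mathcal C}\cong\PP^2$ via $y\mapsto (A_1y,\dots,A_my)$. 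The affine chart requires that the third coordinate $c_i(y):=(A_iy)_3$ be nonzero for every $i$. Hence $M^\Pi_{\mathcal C}\cong X:=\PP^2\setminus (L_1\cup\dots\cup L_m)$ with $L_i=\{c_i=0\}$. For a generic arrangement these are $m$ lines in general position. This $X$ is smooth and affine, so the formula applies.

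Next we homogenize the quadric. Write $A_iy=(a_i,b_i,c_i)$ and $q_i(y)=(a_i-u_{i1}c_i)^2+(b_i-u_{i2}c_i)^2$. The level set pulls back to $X\cap D$, where $D\subset\PP^2$ is the curve
\[
D:\ \sum_{i=1}^m q_i(y)\prod_{j\neq i}c_j(y)^2-\beta\prod_{j}c_j(y)^2=0,
\]
of degree $d=2m$. Additivity then gives
\[
\chi(X\setminus D)=\chi(X)-\chi(D)+\chi\bigl(D\cap\textstyle\bigcup_i L_i\bigr).
\]
The complement of the lines satisfies $\chi(\bigcup L_i)=2m-\binom m2$ by inclusion--exclusion, so $\chi(X)=3-2m+\binom m2$.

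We then compute $D\cap L_k$. On $L_k$ the equation restricts to $q_k\prod_{j\neq k}c_j^2$. Since the cameras are calibrated, $q_k|_{L_k}=a_k^2+b_k^2$, which vanishes at two points: the preimages of the circular points. Genericity ensures these points are distinct and avoid the other lines. In addition, $D$ meets $L_k$ at each point $L_j\cap L_k$. Thus $D\cap\bigcup L_i$ consists of $2m+\binom m2$ points.

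The main step, and the one we expect to be the obstacle, is computing $\chi(D)$ via Theorem~\ref{thm: correction of euler charachteristic of singular divisor}. We compare $D$ with a smooth member $W$ of degree $2m$, for which $\chi(W)=3d-d^2=6m-4m^2$. We must show the following.
\begin{enumerate}
\item $D$ is smooth away from the points $L_j\cap L_k$. For this we use Bertini's theorem: as $(u,\beta)$ vary, the curves $D$ form a linear system whose base locus lies in $\bigcup L_i$. At the circular preimages, a direct local check shows $D$ is smooth for generic cameras, since its tangent there is not $L_k$.
\item At each $p=L_j\cap L_k$, $D$ has an ordinary node. In local coordinates $s=c_j$, $t=c_k$ the equation reads $q_j(p)\,t^2\cdot(\text{unit})+q_k(p)\,s^2\cdot(\text{unit})+O(s^2t^2)$. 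Both $q_j(p)$ and $q_k(p)$ are nonzero for generic data, so this is an $A_1$ singularity with Milnor number $1$.
\end{enumerate}
If the local analysis turns out messier, we would fall back on a direct degeneration argument that moves $u$ only. With these facts, each node raises the Euler characteristic by $1$, giving
\[
\chi(D)=6m-4m^2+\binom m2 .
\]

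Assembling the pieces,
\[
\operatorname{EDdeg}=\Bigl(3-2m+\tbinom m2\Bigr)-\Bigl(6m-4m^2+\tbinom m2\Bigr)+\Bigl(2m+\tbinom m2\Bigr)=\tfrac92m^2-\tfrac{13}2m+3 .
\]
This matches the Main Theorem and the values $8$ and $24$ for $m=2,3$.
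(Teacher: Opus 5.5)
Your proposal is correct and is essentially the paper's proof. It uses the same Maxim--Rodriguez--Wang formula and the same additivity decomposition (your $\chi(X)$ is the paper's $\chi(\mathcal M^\Pi_{\mathcal C})-\chi(D^\Pi_\infty)$), the same boundary count $2m+\binom m2$, and the same value $\chi(D^\Pi_Q)=6m-4m^2+\binom m2$, obtained from a smooth degree-$2m$ curve corrected by $\binom m2$ ordinary nodes at the points $L_i\cap L_j$; your local checks on $\PP^2$ are if anything cleaner than the paper's, though $q_k|_{L_k}=a_k^2+b_k^2$ holds simply because $c_k$ vanishes on $L_k$ (calibration plays no role), and $q_j(p)\neq 0$ at $p=L_j\cap L_k$ is a genericity condition on the cameras, not on $u$.
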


\paragraph{Affine charts of $(\PP^2)^m$.}

Each projective plane decomposes as
\[
\PP^2 = \CC^2 \cup \PP^1_\infty,
\]
where $\CC^2$ is the standard affine chart and $\PP^1_\infty$ denotes the line at infinity. 
Thus the product $(\PP^2)^m$ contains the affine chart $\CC^{2m}$, whose complement is
\[
H_\infty = \bigcup_{i=1}^{m} H_{\infty,i},
\]
where $H_{\infty,i}$ is obtained by placing $\PP^1_\infty$ in the $i$-th factor. 
Geometrically, these hypersurfaces correspond to configurations where the $i$-th image point lies at infinity.

\paragraph{Distance hypersurface.}

Let $\beta \in \CC^{2m+1}$ be general parameters. The squared Euclidean distance function on $\CC^{2m}$ is
\[
\sum_{i=1}^{2m} (z_i - \beta_i)^2 + \beta_0 .
\]
Its projective closure in $(\PP^2)^m$ defines a hypersurface $H_Q$. 
Intersecting a subvariety of $(\PP^2)^m$ with $H_Q$ imposes the critical equations of the Euclidean distance function.

\paragraph{Divisors associated with the ED problem.}

Let $\mathcal{M}^\Pi_{\mathcal C}$ denote the projective planar-anchored point multiview variety and let $M^\Pi_{\mathcal C}$ be its affine part. 
We consider the divisors
\[
D_Q^\Pi := \mathcal{M}^\Pi_{\mathcal C} \cap H_Q, \qquad
D_{\infty,i}^\Pi := \mathcal{M}^\Pi_{\mathcal C} \cap H_{\infty,i}, \qquad
D_\infty^\Pi := \mathcal{M}^\Pi_{\mathcal C} \cap H_\infty .
\]
Since $H_\infty$ is the complement of the affine chart $\CC^{2m}$ in $(\PP^2)^m$, the divisor $D_\infty^\Pi$ coincides with the complement of $M^\Pi_{\mathcal C}$ inside $\mathcal{M}^\Pi_{\mathcal C}$. 
The divisor $D_Q^\Pi$ is the closure of the critical locus of the squared distance function.

Our computation of the Euclidean distance degree relies on the following theorem from \cite{Maxim2020}, which relates the Euclidean distance degree to Euler characteristics.

\begin{theorem} \label{theorem:maxim}
Let $V \subset \mathbb{C}^n$ be a smooth variety and let 
$U_\beta$ denote the complement of the hypersurface 
\[
\sum_{i=1}^n (z_i - \beta_i) + \beta_0 = 0
\quad \subset \mathbb{C}^n,
\]
where $z \in \mathbb{C}^n$ and $\beta \in \mathbb{C}^{n+1}$. Then,
\[
\mathrm{EDdeg}(V) = (-1)^{\dim V}\,\chi\bigl(V \cap U_\beta\bigr).
\]
\end{theorem}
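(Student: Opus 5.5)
The statement is cited from \cite{Maxim2020}; here is how I would reprove it. Throughout I read the hypersurface as the ED hypersurface $\sum_{i}(z_i-\beta_i)^2+\beta_0=0$, the squared version used in the ``Distance hypersurface'' paragraph. Set $d=\dim V$, $f_\beta(z)=\sum_i (z_i-\beta_i)^2$ restricted to $V$, and $F_t=\{z\in V: f_\beta(z)=t\}$. Then $V\cap U_\beta=V\setminus F_{-\beta_0}$. The plan is to compute this Euler characteristic by fibering $V$ over $\CC$ via $f_\beta$ and localizing all topology at the critical points of $f_\beta$. Those critical points are exactly the ED critical points for the data $\beta'=(\beta_1,\dots,\beta_n)$, so there are $\mathrm{EDdeg}(V)$ of them.

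\textbf{Step 1 (fibration formula).} By constructibility there is a finite set $B\subset\CC$ of atypical values such that $f_\beta$ is a locally trivial fibration over $\CC\setminus B$, with general fiber $F_{\mathrm{gen}}$. Additivity of $\chi$ and multiplicativity over fibrations give
\[
\chi(V)=\chi(F_{\mathrm{gen}})\,\chi(\CC\setminus B)+\sum_{c\in B}\chi(F_c)
=\chi(F_{\mathrm{gen}})+\sum_{c\in B}\bigl(\chi(F_c)-\chi(F_{\mathrm{gen}})\bigr).
\]
For general $\beta_0$ we have $-\beta_0\notin B$, so $F_{-\beta_0}\simeq F_{\mathrm{gen}}$. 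Hence
\[
\chi(V\cap U_\beta)=\sum_{c\in B}\bigl(\chi(F_c)-\chi(F_{\mathrm{gen}})\bigr).
\]

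\textbf{Step 2 (Morse contributions).} For general $\beta'$, every critical point of $f_\beta$ on the smooth variety $V$ is nondegenerate (Morse) and has a distinct critical value. This is the standard genericity statement for ED critical points, proved via the ED correspondence being irreducible of dimension $n$ with a generically finite and unramified projection to the data space. Near a Morse point the local Milnor fiber is a bouquet of one $(d-1)$-sphere, and the singular fiber is obtained from the nearby fiber by collapsing that vanishing cycle. So each critical point changes $\chi$ of the fiber by $(-1)^d$. If these were the only contributions, the sum in Step 1 would equal $(-1)^d\cdot\mathrm{EDdeg}(V)$, which is the claim.

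\textbf{Step 3 (no atypical values at infinity; main obstacle).} It remains to show that, for general $\beta$, no value of $f_\beta$ is atypical because of behavior at infinity. I would compactify $V$ by its closure $\overline V\subset\PP^n$ and take a Whitney stratification of $\overline V$ that includes $\overline V\cap H_\infty$. The function $f_\beta$ extends to a pencil of quadrics $\{Q_0+t\,x_0^2=0\}$, and one needs every member of this pencil to be transverse to the strata at infinity away from the base locus. The difficulty is that the leading part $\sum z_i^2$ is \emph{not} general: it is the fixed isotropic quadric, so the base locus $\overline V\cap H_\infty\cap\{\sum z_i^2=0\}$ is independent of $\beta$ and Bertini cannot be applied directly. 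I would handle this as in \cite{Maxim2020}. First, blow up or stratify along this base locus, refining the stratification by intersecting with the isotropic quadric. Then use that the linear terms $-2\sum\beta_i z_i x_0$ are general, so the pencil is general modulo the fixed quadric. A stratified Bertini/Kleiman-type argument then shows there is no vanishing cycle at infinity. Equivalently, the Parusi\'nski--Pragacz correction term of Theorem~\ref{thm: correction of euler charachteristic of singular divisor} is supported only at the affine Morse points. Combining this with Steps 1--2 gives $\mathrm{EDdeg}(V)=(-1)^{d}\chi(V\cap U_\beta)$.
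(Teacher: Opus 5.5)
The paper gives no proof of this statement; it quotes it from \cite{Maxim2020}. You are right that the hypersurface has to be the squared one, with $\beta$ general. Your Steps 1 and 2 are sound. For $-\beta_0\notin B$ the fibration identity gives $\chi(V\cap U_\beta)=\sum_{c\in B}(\chi(F_c)-\chi(F_{\mathrm{gen}}))$, and each Morse point contributes $(-1)^d$; distinctness of critical values is not needed.

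The gap is Step 3, which carries the whole content of the theorem. Polynomial functions can have atypical values at infinity with no critical points at all (e.g.\ $x(xy-1)$ on $\CC^2$), and the mechanism you propose cannot work in $\PP^n$. The level sets close up to $Q_t=\sum_i(z_i-\beta_i x_0)^2-t\,x_0^2$. Take a point $p=[0:z]$ of a stratum $S\subset\overline V\cap H_\infty$ with $\sum_i z_i^2=0$, and $v\in T_pS$, so $v$ has no $x_0$-component. Then $dQ_t(p)(v)=2\sum_i z_iv_i$, which is independent of both $t$ and $\beta$. So transversality of the pencil to $S$ at $p$ is equivalent to transversality of $S$ to the fixed isotropic quadric. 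That is a property of $V$ alone, and when it fails it fails for every $\beta$. For instance, for the isotropic line $V=\{z_2=\sqrt{-1}\,z_1\}\subset\CC^2$, every quadric $\{Q_t=0\}$ contains the point $[0:1:\sqrt{-1}]$ of $\overline V\cap H_\infty$, even though nothing atypical happens there. The linear terms $-2x_0\sum_i\beta_iz_i$ are invisible on $T_pS\subset T_pH_\infty$. Refining the stratification by the isotropic quadric only makes transversality harder. The proposed blow-up is never specified or checked. The closing appeal to the Parusi\'nski--Pragacz correction restates what must be proved rather than proving it.

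The missing idea is to change the compactification by passing to the graph $V'=\{(z,s): z\in V,\ s=\sum_i z_i^2\}\subset\CC^{n+1}$. This is a smooth closed subvariety isomorphic to $V$. On it $f_\beta$ becomes the restriction of the linear function $\ell_\beta=s-2\sum_i\beta_iz_i+\sum_i\beta_i^2$. Hence the ED critical points are the critical points of $\ell_\beta|_{V'}$, and $V\cap U_\beta\cong V'\setminus\{\ell_\beta=-\beta_0\}$. Linear forms with nonzero $s$-coefficient are dense, and rescaling or adding a constant changes neither critical points nor level sets. So $\ell_\beta$ is effectively a \emph{general} linear form on $\CC^{n+1}$.

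In $\overline{V'}\subset\PP^{n+1}$ the level sets close up to the hyperplanes $M_t=\{s-2\sum_i\beta_iz_i+(\sum_i\beta_i^2-t)x_0=0\}$. All of them meet $H_\infty\cong\PP^n$ in the same axis $A=\{x_0=0,\ s=2\sum_i\beta_iz_i\}$. Now $A$ is a general hyperplane of $H_\infty$, hence transverse to every stratum of $\overline{V'}\cap H_\infty$ by Bertini--Kleiman. At $p\in S\cap A$ we have $T_pS+T_pA=T_pH_\infty$ and $T_pM_t\neq T_pH_\infty$, so every $M_t$ is transverse to every stratum at infinity. Thom's first isotopy lemma, applied to the blow-up of $\overline{V'}$ along $\overline{V'}\cap A$, then shows that $\ell_\beta|_{V'}$ has no atypical values at infinity. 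With this in place your Steps 1--2 apply verbatim to $V'$. This is the affine Lefschetz-type count of critical points of a general linear form on a smooth affine variety (Seade--Tib\u{a}r--Verjovsky), which is the standard route to the cited formula.
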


Therefore, in order to find 
$ \mathrm{EDdeg} (M^\Pi_{\mathcal{C}}),$ 
we need to compute the Euler characteristic 
$ \chi\!\left(M^\Pi_{\mathcal{C}} \cap U_\beta\right).$ As derived in \cite{Maxim2020}, using additivity and inclusion--exclusion principle we have
\[\label{eq:EDchi}
\chi(M^\Pi_{\mathcal{C}} \cap U_\beta) 
= \chi(\mathcal{M}^\Pi_{\mathcal{C}}) - \chi(D_\infty^\Pi) + \chi(D_Q^\Pi \cap D_\infty^\Pi) - \chi(D_Q^\Pi).\tag{1}
\]

The structure of the proof of Theorem~`\ref{EDdeg_plane-thm} is to calculate the four terms of the above equation. In the following lemma, we compute the first term:
\begin{lemma}\label{eu: plane-anchor MV}
 For a fixed plane $\Pi \subset \mathbb{P}^3$ and a generic arrangement of $m$ cameras~$\mathcal{C}$,
\[
\chi\!\left(\mathcal{M}^{\Pi}_{\mathcal{C}}\right) = 3.
\]
\end{lemma}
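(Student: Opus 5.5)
The plan is to reduce the statement to the Euler characteristic of $\PP^2$, using the isomorphism already established. By Proposition~\ref{bireg.isomorphism}, for any camera arrangement whose centers avoid $\Pi$, the variety $\mathcal{M}^\Pi_{\mathcal C}$ is isomorphic to $\PP^2$. The isomorphism is realized by the morphism $y\mapsto (A_1y,\dots,A_my)$ with $A_i=C_iU_\Pi\in GL_3$, and its regular inverse is $(x_1,\dots,x_m)\mapsto A_1^{-1}x_1$.

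First I would check that the hypothesis of that proposition holds. For a fixed plane $\Pi$ and a generic arrangement $\mathcal C$, the condition that some center $c_i$ lies on $\Pi$ is a proper Zariski closed condition on the camera matrices. A generic arrangement therefore has no center on $\Pi$, so Proposition~\ref{bireg.isomorphism} applies.

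Next, an isomorphism of complex algebraic varieties is in particular a homeomorphism in the analytic topology. As recalled in the preliminaries, the topological Euler characteristic is invariant under homeomorphisms. Hence $\chi(\mathcal M^\Pi_{\mathcal C})=\chi(\PP^2)$.

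Finally, I would apply the lemma $\chi(\PP^n)=n+1$ with $n=2$, which gives $\chi(\PP^2)=3$. As a sanity check, the cell decomposition $\PP^2=\CC^2\sqcup\CC^1\sqcup\CC^0$ together with additivity gives $1+1+1=3$.

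I do not expect a real obstacle here. The only point needing care is the genericity reduction, namely that generic cameras meet the hypothesis $c_i\notin\Pi$, and this is immediate.
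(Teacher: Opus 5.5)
Your proposal is correct and follows essentially the same route as the paper, which simply invokes Proposition~\ref{bireg.isomorphism} to get $\mathcal{M}^{\Pi}_{\mathcal{C}}\cong\PP^2$ and concludes $\chi(\mathcal{M}^{\Pi}_{\mathcal{C}})=\chi(\PP^2)=3$. Your explicit check that generic cameras have no center on $\Pi$, and your remark that an algebraic isomorphism is a homeomorphism, are additions the paper leaves implicit.
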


\begin{proof}
From Proposition~\ref{bireg.isomorphism}, $\mathcal{M}^{\Pi}_{\mathcal{C}} \simeq \mathbb{P}^2$, hence $\chi(\mathcal{M}^{\Pi}_{\mathcal{C}})=\chi(\mathbb{P}^2)=3$. 
\end{proof}
In the next step, we compute the second terms of the right-hand sides of \eqref{eq:EDchi}.
\begin{lemma}[Divisor at infinity for the anchored points on a plane]
\label{lem:Dinfty-plane}
Fix a plane~$\Pi\subset \PP^{3}$ and let $C=(C_{1},\dots,C_{m})$ be a generic arrangement of $m$ cameras with no center on $\Pi$. Then
$$\chi\!\left(D_{\infty}^{\Pi}\right)=2m-\binom{m}{2}$$.
\end{lemma}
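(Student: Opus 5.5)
Under the identification $\mathcal{M}^\Pi_{\mathcal C}\cong\PP^2$ of Proposition~\ref{bireg.isomorphism}, with coordinates $y\in\PP^2$ and $x_i\sim A_i y$, $A_i\in GL_3$, the divisor $D^\Pi_{\infty,i}=\mathcal{M}^\Pi_{\mathcal C}\cap H_{\infty,i}$ is exactly the set of $y$ with $e_3^T A_i y=0$. This is a single line $L_i:=\{\ell_i^T y=0\}\subset\PP^2$, where $\ell_i:=A_i^Te_3$ is the third row of $A_i$. Geometrically, $L_i$ is the intersection of $\Pi$ with the principal plane of $C_i$, pulled back to $\PP^2$. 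So $D^\Pi_\infty=L_1\cup\dots\cup L_m$ is an arrangement of $m$ lines in $\PP^2$, and the plan is to compute its Euler characteristic by inclusion--exclusion, as recalled in the preliminaries.

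The first key step is to show that for a generic camera arrangement (with $\Pi$ fixed) the lines $L_i$ are in general position: pairwise distinct and with no three concurrent. Since $\ell_i=U_\Pi^T C_i^T e_3$, it is the restriction to $\Pi$ of the third row of $C_i$. As $C_i$ ranges over generic full-rank matrices, this row is a generic vector in $\CC^4$, so its restriction $\ell_i$ is a generic vector in $\CC^3$, chosen independently for each $i$. Distinctness of two lines and non-concurrency of three lines are each the nonvanishing of a polynomial (a $2\times 2$ minor, resp.\ $\det(\ell_i,\ell_j,\ell_k)$), and each is clearly nonzero for some choice. Hence these conditions hold on a dense open set of arrangements. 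The constraint that no center lies on $\Pi$ is itself open, so it is compatible with this.

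The second step is the count. Each $L_i\cong\PP^1$ has $\chi=2$. Each pairwise intersection $L_i\cap L_j$ with $i<j$ is a single point, so $\chi=1$. All triple and higher intersections are empty, so they contribute $0$. Inclusion--exclusion then gives
\[
\chi\bigl(D^\Pi_\infty\bigr)=\sum_{i=1}^m\chi(L_i)-\sum_{i<j}\chi(L_i\cap L_j)=2m-\binom{m}{2}.
\]

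The only step needing care is the genericity argument, in particular checking that the third rows of the $C_i$ really restrict to generic linear forms on $\Pi$; this holds because $U_\Pi$ has rank $3$. One should also note that $D^\Pi_\infty$ is taken set-theoretically (the reduced union of lines), which is all that the Euler characteristic sees.
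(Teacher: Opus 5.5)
Your proposal is correct and takes essentially the paper's route: identify each $D^\Pi_{\infty,i}$ with a line in $\mathcal{M}^\Pi_{\mathcal C}\cong\PP^2$, use genericity to put the $m$ lines in general position, and apply inclusion--exclusion to get $2m-\binom{m}{2}$. Your justification of general position is more explicit than the paper's bare appeal to ``genericity'': you note that $\ell_i=U_\Pi^T C_i^T e_3$ is a generic vector in $\CC^3$ because $U_\Pi$ has rank $3$, and that distinctness and non-concurrency are open conditions given by nonvanishing minors.
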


\begin{proof}
Since no camera center lies on $\Pi$, each restriction
$C_{i}\bigr|_{\Pi}:\Pi\to\PP^{2}$ is a projective isomorphism.
Hence the pullback of the line at infinity in the $i$-th factor is a line on
$\Pi\cong\PP^{2}$. Because $\mathcal{M}^{\Pi}_{\mathcal{C}}$ is the image of $\Pi$ under the joint
map, we get that $D_{\infty,i}^{\Pi}$ is (the image of) a line in $\Pi$.
Therefore $D_{\infty,i}^{\Pi}\cong\PP^{1}$ and
\[
\chi\!\left(D_{\infty,i}^{\Pi}\right)=\chi(\PP^{1})=2.
\]

\smallskip
For $i\neq j$, the divisors $D_{\infty,i}^{\Pi}$ and $D_{\infty,j}^{\Pi}$
are distinct lines on the plane~\mbox{$\mathcal{M}^{\Pi}_{\mathcal{C}}\cong\PP^{2}$.}
By genericity, they meet transversely in a single point and no three of the
$D_{\infty,i}^{\Pi}$ are concurrent. So, $\chi\!\left(D_{\infty,i}^{\Pi}\right)$ only have pairwise intersections, and two back-projected lines $A_i, A_j$ through $c_i$ and $c_j$ respectively, meet in exactly a generic point in $\Pi.$ Therefore $\left(D_{\infty,i}^{\Pi}\cap D_{\infty,j}^{\Pi}\right)$ consists of a single element. By additivity of the Euler characteristic and
inclusion--exclusion,
\[
\chi\!\left(D_{\infty}^{\Pi}\right)
=\chi\!\left(\bigcup_{i=1}^{m} D_{\infty,i}^{\Pi}\right)
=\sum_{i=1}^{m}\chi\!\left(D_{\infty,i}^{\Pi}\right)
-\sum_{1\le i<j\le m}\chi\!\left(D_{\infty,i}^{\Pi}\cap D_{\infty,j}^{\Pi}\right)
= \sum_{i=1}^{m} 2 \;-\; \sum_{i<j} 1.
\]
Hence
\[
\chi\!\left(D_{\infty}^{\Pi}\right)
=2m-\binom{m}{2}.
\]
\end{proof}
Recall that $H_Q$ is defined as the closure of the affine hypersurface
\[\label{affin-euclid-hypersurf}
\sum_{i=1}^{2m} (z_i - \beta_i)^2 + \beta_0 = 0
\tag{2}
\]
in $(\mathbb{P}^2)^m$.  
Introduce homogeneous coordinates $(x_i, y_{2i-1}, y_{2i})$ with 
\[
z_{2i-1} = \frac{y_{2i-1}}{x_i}, 
\qquad 
z_{2i}   = \frac{y_{2i}}{x_i}, 
\quad 1 \leq i \leq m.
\]
Let $\mathbf{x} = x_1 \cdots x_m$.  
Then the homogenization of (\ref{affin-euclid-hypersurf}), which gives the defining equation of $H_Q$, is
\[
\sum_{i=1}^m 
\left[ (y_{2i-1} - \beta_{2i-1} x_i)^2 + (y_{2i} - \beta_{2i} x_i)^2 \right] 
\frac{\mathbf{x}^2}{x_i^2} 
\;+\; \beta_0 \mathbf{x}^2 = 0.
\tag{3}
\]

\begin{lemma}[Intersection of $D_{Q}^{\Pi}$ with $D_{\infty}^{\Pi}$]
\label{lem:DQ-infty-plane}
Let $\mathcal{M}^\Pi_{\mathcal{C}}$ be the planar-anchored point multiview variety 
for $m$ cameras in generic position with anchor plane $\Pi \subset \PP^3$.
Then
\[
\chi\!\left(D_{Q}^{\Pi}\cap D_{\infty}^{\Pi}\right) \;=\; 2m + \binom{m}{2}.
\]
\end{lemma}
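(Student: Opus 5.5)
The plan is to determine the set $D_Q^\Pi\cap D_\infty^\Pi$ explicitly and show that it is a finite set of exactly $2m+\binom{m}{2}$ points. The Euler characteristic of a finite set is its cardinality, so this gives the claim. Since $D_\infty^\Pi=\bigcup_i D_{\infty,i}^\Pi$, I will restrict the defining equation (3) of $H_Q$ to each line $D_{\infty,i}^\Pi=\mathcal M^\Pi_{\mathcal C}\cap\{x_i=0\}$ separately. By Lemma~\ref{lem:Dinfty-plane}, each such line is a copy of $\PP^1$.

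On $\{x_i=0\}$, every summand in (3) with index $j\neq i$ carries the factor $\mathbf{x}^2/x_j^2$. This factor contains $x_i^2$, so those summands vanish, and so does $\beta_0\mathbf{x}^2$. Only the $i$-th summand survives, and (3) reduces to
\[
\bigl(y_{2i-1}^2+y_{2i}^2\bigr)\prod_{j\neq i}x_j^2=0 .
\]
Hence $D_Q^\Pi\cap D_{\infty,i}^\Pi$ is the union of two pieces on the line $D_{\infty,i}^\Pi$:
\begin{enumerate}
\item the preimage of the two isotropic (circular) points $y_{2i-1}=\pm\sqrt{-1}\,y_{2i}$ on the line at infinity of the $i$-th image;
\item the points where some $x_j$, $j\neq i$, also vanishes, i.e.\ the intersection points $D_{\infty,i}^\Pi\cap D_{\infty,j}^\Pi$.
\end{enumerate}
Since $C_i|_\Pi$ is an isomorphism, the first piece consists of exactly two points of $\mathcal M^\Pi_{\mathcal C}$. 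The second piece is consistent across the different $i$: at a point with $x_i=x_j=0$, every summand of (3) vanishes, because each contains $x_i^2$ or $x_j^2$. So these points genuinely lie on $D_Q^\Pi$ regardless of $\beta$. Taking the union over $i$, the set $D_Q^\Pi\cap D_\infty^\Pi$ consists of $2m$ isotropic points (two on each line) together with the $\binom{m}{2}$ pairwise intersection points of the lines.

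The remaining, and main, step is a genericity argument showing that all these points are distinct, so that the count is exactly $2m+\binom{m}{2}$. Three things must be checked:
\begin{enumerate}
\item No three lines $D_{\infty,i}^\Pi$ are concurrent. This was already used in Lemma~\ref{lem:Dinfty-plane}.
\item No isotropic point of image $i$ lies on another line $D_{\infty,j}^\Pi$. Such a point would be a pairwise intersection point, which would merge the two lists.
\item The two isotropic points coming from image $i$ and those coming from image $j$ never coincide.
\end{enumerate}
Items 2 and 3 are really the same condition, since a coincidence would force the point onto both lines. To verify it, I will pull everything back to $\Pi\cong\PP^2$. There $D_{\infty,i}^\Pi$ is the line $\Pi\cap(\text{principal plane of }C_i)$, and the two isotropic points are its intersections with a conic determined by $C_i$. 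For calibrated cameras $C_i=R_i[I\mid t_i]$, I expect these points to move with the rotation $R_i$ and the centre $t_i$. Each degeneracy is therefore a proper Zariski-closed condition on the arrangement; I will exhibit one explicit arrangement avoiding it to confirm that the condition is proper. Finally, note that $D_Q^\Pi\cap D_\infty^\Pi$ is indeed zero-dimensional: neither factor $y_{2i-1}^2+y_{2i}^2$ nor $x_j$ vanishes identically on the line $D_{\infty,i}^\Pi$. Hence its Euler characteristic equals its number of points, $2m+\binom{m}{2}$.
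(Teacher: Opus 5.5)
Your proposal is correct and takes essentially the same route as the paper. Both split $H_Q\cap H_{\infty,i}$ into the two isotropic points $y_{2i-1}=\pm\sqrt{-1}\,y_{2i}$ on the $i$-th line at infinity and the pairwise points $D_{\infty,i}^\Pi\cap D_{\infty,j}^\Pi$, then count $2m+\binom{m}{2}$ points. Your version is more careful: you derive the decomposition directly from equation (3), and you make explicit the genericity check that no isotropic point of one image lies on another line at infinity, which the paper uses only implicitly when it adds the two contributions.
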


\begin{proof}
Intersecting the homogenized distance divisor $H_Q$ with the hyperplane at
infinity $H_{\infty,i}=\{x_i=0\}$, one obtains three types of components
as in the general decomposition:
\[
\begin{aligned}
H_Q\cap H_{\infty,i}
&=\{y_{2i-1}+\sqrt{-1}\,y_{2i}=0,\,x_i=0\} \\
&\quad\cup\ \{y_{2i-1}-\sqrt{-1}\,y_{2i}=0,\,x_i=0\} \\
&\quad\cup\ \bigcup_{j\neq i}\{x_i=x_j=0\}.
\end{aligned}
\]

For the planar-anchored point multiview variety $\mathcal{M}^\Pi_{\mathcal C}$, set 
\[
\begin{aligned}
K_{i}^{\Pi,\pm}
&:=\mathcal{M}^\Pi_{\mathcal C}\cap\{y_{2i-1}\pm\sqrt{-1}\,y_{2i}=0,\;x_i=0\},\\
A_{i,j}^{\Pi}
&:=\mathcal{M}^\Pi_{\mathcal C}\cap\{x_i=x_j=0\},
\qquad (i\neq j).
\end{aligned}
\]
\textbf{Contribution of the $K_i^{\Pi,\pm}$ components.}
For a fixed camera $C_i$, the conditions
\[
y_{2i-1}\pm\sqrt{-1}\,y_{2i}=0,\qquad x_i=0
\]
mean that the image point in the $i$-th factor is the isotropic point
\mbox{\([0:\pm \sqrt{-1}:1]\in\PP^2\).}
Back-projecting such a point gives a \emph{complex} line through the camera center. Since the anchor plane $\Pi$ is not parallel to this direction,
this complex line meets~$\Pi$ in a unique point.
Thus each of the two isotropic directions produces a single configuration in $\mathcal{M}^\Pi_{\mathcal C}$.

Hence, for each $i$, the sets $K_i^{\Pi,+}$ and $K_i^{\Pi,-}$ each contribute
one point, giving a total of
\[
\sum_{i=1}^m \chi(K_i^{\Pi,+})+\chi(K_i^{\Pi,-})
=2m.
\]

\textbf{Contribution of the $A_{i,j}^\Pi$ components.}
The components $\{x_i=x_j=0\}$ impose that the image points of cameras $i$ and $j$ both lie at infinity. Geometrically, this means the back-projected rays of
$C_i$ and $C_j$ meet at a point of the anchor plane $\Pi$.
For generic cameras this intersection is unique, so each $A_{i,j}^\Pi$
consists of a single point, and different pairs $\{i,j\}$ give disjoint
configurations. Therefore
\[
\sum_{i<j}\chi(A_{i,j}^\Pi)=\binom{m}{2}.
\]

Adding both contributions gives
\[
\chi(D_Q^\Pi\cap D_\infty^\Pi)
=2m+\binom{m}{2}.
\]
\end{proof}

We recall that we have the following linear equivalence of divisors in $(\mathbb{P}^2)^m$:
\[
H_Q \equiv 2H_{\infty,1} + \cdots + 2H_{\infty,m}.
\]

Then, as divisors of the planar-anchored point multiview variety, we obtain
\[
D_Q^{\Pi} \equiv 2\mathcal{M}_\mathcal{C}^{\Pi} \cap H_{\infty,1} 
+ \cdots + 2\mathcal{M}_\mathcal{C}^{\Pi} \cap H_{\infty,m}.
\]

Consider the projection 
\[
\pi_{\Pi} : \mathcal{M}_\mathcal{C}^{\Pi} \;\longrightarrow\; \Pi,
\]
which sends each tuple of image points to the intersection of their back-projected lines with the anchor plane $\Pi$.  
In the Chow ring of $\mathbb{P}^3$, every point of $\Pi$ is equivalent.  

Let $H$ be a generic point in $\Pi$, and define
\[
D_H^{\Pi} := \pi_{\Pi}^*(H),
\]  
to be the preimage of a generic hyperplane in $\Pi$, i.e. a generic
point of $\Pi.$ 

Recall that a divisor on $\PP^2$ determines a family of all
divisors that are linearly equivalent to it, called its \emph{linear system}.  In our setting,
the divisor $D_Q^\Pi$ has degree~$2m$, so $\mathcal{O}(D_Q^\Pi)$ corresponds to the
line bundle of plane curves of degree~$2m$ on $\PP^2$.  The linear system
$\Gamma(\mathcal{M}^\Pi_{\mathcal C},\mathcal{O}(D_Q^\Pi))$ therefore parametrizes all
plane curves of degree $2m$ that play the same geometric role as $D_Q^\Pi$.

\begin{lemma}
A generic divisor $D^{\Pi'}$ in the linear system 
$\Gamma(\mathcal{M}^\Pi_{\mathcal C}, \mathcal{O}(D_Q^\Pi))$ is smooth.
\end{lemma}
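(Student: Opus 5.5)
The plan is to apply the Bertini consequence stated above on $V=\mathcal{M}^\Pi_{\mathcal C}$ with $\Gamma$ the linear system of $D_Q^\Pi$. This needs three checks: $V$ is smooth, $\Gamma$ has positive dimension, and $\Gamma$ is basepoint-free.

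\textbf{Step 1: identify $\Gamma$ with a complete linear system on $\PP^2$.} By Proposition~\ref{bireg.isomorphism} the parametrization $y\mapsto (A_1y,\dots,A_my)$ is an isomorphism $\PP^2\cong\mathcal{M}^\Pi_{\mathcal C}$, so $V$ is smooth. Under this isomorphism each $D^\Pi_{\infty,i}$ is the line $\{\ell_i(y)=0\}$, where $\ell_i$ is the third coordinate of $A_iy$; this was already used in Lemma~\ref{lem:Dinfty-plane}. From the linear equivalence $D_Q^\Pi\equiv 2\sum_i D^\Pi_{\infty,i}$ we get $\mathcal{O}(D_Q^\Pi)\cong\mathcal{O}_{\PP^2}(2m)$. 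Concretely, pulling back equation (3) via $x_i=\ell_i(y)$ and $(y_{2i-1},y_{2i})$ equal to the first two coordinates of $A_iy$ gives a form of degree $2m$ in $y$. Hence $\Gamma$ is the complete linear system $|\mathcal{O}_{\PP^2}(2m)|$, of dimension $\binom{2m+2}{2}-1>0$.

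\textbf{Step 2: basepoint-freeness.} Every $\mathcal{O}_{\PP^2}(d)$ with $d\ge1$ is globally generated. For any point $p\in\PP^2$ choose a linear form $\lambda$ with $\lambda(p)\neq0$; then $\lambda^{2m}$ is a section not vanishing at $p$. So the base locus is empty, and equivalently the sections generate the line bundle at each point. Bertini then gives that a general member $D^{\Pi'}\in\Gamma$ is smooth, which proves the lemma.

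\textbf{Main obstacle.} The delicate point is Step 1, namely making sure $\Gamma$ is the full complete system $|\mathcal{O}(2m)|$ rather than just the span of the specific distance hypersurfaces $H_Q$ as $\beta$ varies. That sub-family does have base points, for instance at the points $A_{i,j}^\Pi$ where two lines at infinity meet, since (3) vanishes to high order there. I would state explicitly that $\Gamma=\Gamma(\mathcal{M}^\Pi_{\mathcal C},\mathcal{O}(D_Q^\Pi))$ denotes all sections, which matches the paper's description as \enquote{all plane curves of degree $2m$}. With that reading, Step 2 is immediate. The lemma is then used only as the smooth comparison divisor $W$ in Theorem~\ref{thm: correction of euler charachteristic of singular divisor}, and its Euler characteristic follows from the genus formula for a smooth plane curve of degree $2m$.
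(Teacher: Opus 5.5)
Your proposal is correct and follows essentially the same route as the paper. Both restrict $H_Q\equiv 2\sum_i H_{\infty,i}$ to $\mathcal{M}^\Pi_{\mathcal C}\cong\PP^2$, identify $\Gamma$ with the complete system $|\mathcal{O}_{\PP^2}(2m)|$, note that it is basepoint-free, and apply Bertini. Your additions (smoothness of $V$, positive dimension, the explicit section $\lambda^{2m}$) make explicit what the paper leaves implicit. Your caveat is also right: $\Gamma$ must be the complete linear system, not the $\beta$-family of distance hypersurfaces, which has base points at the $p_{ij}$.
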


\begin{proof}
On $(\PP^2)^m$, recall that 
\[
H_Q \equiv 2H_{\infty,1} + \cdots + 2H_{\infty,m}.
\]
Restricting to the planar-anchored point multiview variety 
$\mathcal{M}^\Pi_{\mathcal C}\cong \PP^2$, we obtain
\[
D_Q^\Pi \equiv 2m \cdot H,
\]
where $H=[D_H^\Pi]$ is the hyperplane class on $\PP^2$.

Thus the complete linear system of divisors linearly equivalent to $D_Q^\Pi$
is the linear system of plane curves of degree $2m$.
This system is basepoint-free, since no point of $\PP^2$ lies on all 
degree-$2m$ curves.
By Bertini’s theorem, a generic member of this linear system is smooth.
\end{proof}
\begin{proposition}\label{prop: generic divisor}
Let $D^{\Pi'}$ be a generic divisor in the linear system 
$\Gamma(\mathcal{M}^\Pi_{\mathcal C}, \mathcal{O}(D_Q^\Pi))$. Then
\[
\chi(D^{\Pi'}) = -4m^2 + 6m.
\]
\end{proposition}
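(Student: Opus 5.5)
The plan is to identify $D^{\Pi'}$ with a smooth plane curve of known degree and then read off its Euler characteristic from the genus formula.

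\textbf{Step 1: identify the curve.} By Proposition~\ref{bireg.isomorphism} we have $\mathcal M^\Pi_{\mathcal C}\cong\PP^2$. Under this identification, each $D^\Pi_{\infty,i}$ is a line, as shown in the proof of Lemma~\ref{lem:Dinfty-plane}. Since $H_Q\equiv 2H_{\infty,1}+\cdots+2H_{\infty,m}$, the restriction satisfies $D_Q^\Pi\equiv 2m\cdot H$, where $H$ is the line class; this was already recorded in the preceding lemma. So $\Gamma(\mathcal M^\Pi_{\mathcal C},\mathcal O(D_Q^\Pi))$ is the complete linear system of plane curves of degree $d=2m$. By the preceding lemma (Bertini), a generic member $D^{\Pi'}$ is a smooth plane curve of degree $2m$. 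It is also irreducible: a smooth plane curve is connected by Bézout's theorem, since any two components would meet and create a singular point.

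\textbf{Step 2: compute the Euler characteristic.} A smooth irreducible plane curve of degree $d$ is a compact Riemann surface of genus
\[
g=\frac{(d-1)(d-2)}{2}
\]
by the degree--genus formula, which follows from adjunction: $K_C=(K_{\PP^2}+C)|_C=\mathcal O_C(d-3)$, so $2g-2=d(d-3)$. Its topological Euler characteristic is therefore
\[
\chi = 2-2g = -d(d-3) = 3d-d^2 .
\]
Substituting $d=2m$ gives $\chi(D^{\Pi'})=6m-4m^2=-4m^2+6m$, as claimed.

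\textbf{Main obstacle.} The only delicate point is justifying that the linear system really is the full system of degree-$2m$ curves, so that the generic member is smooth of exactly that degree. This rests on the linear equivalence $H_Q\equiv\sum 2H_{\infty,i}$ pulling back to $2m$ times the line class under the isomorphism $\mathcal M^\Pi_{\mathcal C}\cong\PP^2$, and on $\operatorname{Pic}(\PP^2)=\mathbb Z$. Both are already established in the preceding lemma, so the remainder of the argument is the routine genus computation above.
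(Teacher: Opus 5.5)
Your proposal is correct and follows essentially the same route as the paper. Both identify $D^{\Pi'}$ as a smooth plane curve of degree $2m$ on $\mathcal M^\Pi_{\mathcal C}\cong\PP^2$ and apply the degree--genus formula with $\chi=2-2g$. Your extra justifications (connectedness via B\'ezout, and deriving the genus formula from adjunction) make explicit what the paper uses implicitly.
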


\begin{proof}
On $\mathcal{M}^\Pi_{\mathcal C}\cong \PP^2$, we have 
$D_Q^\Pi \equiv 2m H$, where $H=[D_H^\Pi]$ is the hyperplane class. Thus $D^{\Pi'}$ is a smooth plane curve of degree $2m$. Hence, the genus $g$ of~$D^{\Pi'}$ is
\[
g = \frac{(d-1)(d-2)}{2}, \quad \text{with } d = \deg(D^{\Pi'}) = 2m.
\]
Therefore
\[
g = \frac{(2m-1)(2m-2)}{2}.
\]

The Euler characteristic of a smooth curve is
\[
\chi(D^{\Pi'}) = 2 - 2g.
\]
Substituting gives
\[
\chi(D^{\Pi'}) = 2 - (2m-1)(2m-2) = -4m^2 + 6m.
\]
\end{proof}
In the upcoming paragraph we will make use of Theorem~\ref{thm: correction of euler charachteristic of singular divisor}. 

For each camera $i=1,\dots,m$, let $L_i:=\mathcal M^\Pi_{\mathcal C}\cap H_{\infty,i}$ and
$p_{ij}:=L_i\cap L_j$.
% === 2) Singular strata of D_Q^\Pi ===

\begin{proposition}[Singular strata]\label{prop:sing-strata}
The actual divisor $D_Q^\Pi$ has singular locus consisting of $\binom{m}{2}$ ordinary nodes.
\end{proposition}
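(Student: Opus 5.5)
We work on $\mathcal M^\Pi_{\mathcal C}\cong\PP^2$ through the isomorphism of Proposition~\ref{bireg.isomorphism}, with coordinate $y\in\PP^2$ and $x_i\sim A_iy$. The plan is to pull back equation (3) and read off where the resulting curve is singular.

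Write $\ell_i(y)$ for the first row of $A_i$, so that $x_i=\ell_i(y)$ is a linear form defining the line $L_i$. Write $q_i(y)=(y_{2i-1}-\beta_{2i-1}x_i)^2+(y_{2i}-\beta_{2i}x_i)^2$ for the pulled-back quadratic form. Then $D_Q^\Pi$ is the plane curve of degree $2m$
\[
F(y)=\sum_{i=1}^m q_i(y)\prod_{k\neq i}\ell_k(y)^2+\beta_0\prod_{k}\ell_k(y)^2=0 .
\]

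\textbf{Step 1: the affine part is smooth.} On the affine chart all $\ell_k\neq0$, and $D_Q^\Pi$ is a level set of the squared distance function restricted to the smooth surface $M^\Pi_{\mathcal C}$. Because $\beta$ is generic, Bertini and Sard (generic level value $\beta_0$, generic center) give that this affine curve is smooth.

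\textbf{Step 2: points at infinity other than the $p_{ij}$.} Take a point of $L_i$ that lies on no other $L_k$. Every summand except the $i$-th contains $\ell_i^2$, so locally
\[
F=q_i\cdot u+\ell_i^2\cdot G, \qquad u\neq 0.
\]
On $L_i$ we have $q_i|_{L_i}=(y_{2i-1}^2+y_{2i}^2)|_{L_i}$, a product of the two isotropic linear forms. Since the cameras are generic, these forms cut $L_i$ in two distinct points, the points $K_i^{\pm}$ of Lemma~\ref{lem:DQ-infty-plane}. Hence $F$ meets $L_i$ there with nonzero derivative along $L_i$, so $dF\neq0$ and the curve is smooth at $K_i^\pm$. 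Genericity also keeps $K_i^\pm$ off every other $L_k$.

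\textbf{Step 3: the points $p_{ij}$.} By Lemma~\ref{lem:Dinfty-plane} the lines $L_i$ are in general position, so near $p_{ij}$ we may use $\ell_i,\ell_j$ as local coordinates. Every summand with index $k\neq i,j$, and the $\beta_0$ term, lies in $(\ell_i\ell_j)^2$. Up to a unit, the local equation is therefore
\[
q_i(p_{ij})\,\ell_j^2+q_j(p_{ij})\,\ell_i^2+\text{higher order}.
\]
The point $p_{ij}$ is generic on $L_i$, so it is not isotropic and $q_i(p_{ij})\neq0$; likewise $q_j(p_{ij})\neq0$. The quadratic part is then a nondegenerate binary quadric, which means $p_{ij}$ is an ordinary node. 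This also shows $p_{ij}\in D_Q^\Pi$.

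Combining the three steps, the singular locus is exactly $\{p_{ij}\}_{i<j}$, giving $\binom m2$ nodes. The main obstacle is Step 3. It requires checking that the other summands really vanish to order $\ge 2$ in both directions, and that the nondegeneracy conditions ($p_{ij}$ not isotropic, no three $L_k$ concurrent) follow from genericity of $\mathcal C$ for a fixed $\Pi$. The natural way to get these is to perturb $C_i$ while keeping $\Pi$ fixed, which moves $L_i$ freely.
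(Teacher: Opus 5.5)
Your proposal is correct and follows essentially the same route as the paper: the affine part is smooth, $D_Q^\Pi$ is smooth at the two isotropic points on each $L_i$, and each $p_{ij}=L_i\cap L_j$ has a nondegenerate quadratic tangent cone. Your explicit pullback $F=\sum_i q_i\prod_{k\neq i}\ell_k^2+\beta_0\prod_k\ell_k^2$ is sharper than the paper's sketch in two places: it identifies the paper's unexplained nonzero coefficients at $p_{ij}$ as $q_i(p_{ij})$ and $q_j(p_{ij})$, which are nonzero exactly because $p_{ij}$ is not isotropic, and it replaces the paper's vague transversality claim on the affine chart with a generic-$\beta_0$ Sard argument.
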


\begin{proof}
\noindent
In the ambient $(\PP^2)^m$ one has the standard decomposition
\[
H_Q\cap H_{\infty,i}\;=\;\{\,y_{2i-1}\pm \mathrm{i}\,y_{2i}=x_i=0\,\}\;\cup\;\bigcup_{j\neq i}\{\,x_i=x_j=0\,\}.
\]
Restricting to $\mathcal M^\Pi_{\mathcal C}\cong\PP^2$:
- the first piece cuts $L_i$ in \emph{two} points (the two solutions of $y_{2i-1}^2+y_{2i}^2=0$), and
- the second piece yields the \emph{pairwise intersection points}~$p_{ij}:=L_i\cap L_j$.

Thus any potential singularity of $D_Q^\Pi$ must lie either at one of the two points on $L_i$ (the ``isotropic points’’) or at a pairwise intersection $p_{ij}$.

\smallskip
\noindent\textit{ Isotropic points are smooth.}
Fix $i$ and one isotropic point on $L_i$. Choose affine coordinates $(u,v)$ near \(p_{ij}\) on $\PP^2$ centered there with
$L_i=\{u=0\}$. The restriction of $H_Q$ to $\PP^2$ has a local defining
equation of the form
\[
f(u,v)=a(u,v)\,u+\text{(terms of order $\ge2$)},\qquad a(0,0)\neq0,
\]
because the isotropic condition in the $i$-th factor imposes one transverse linear equation on $L_i$.
Hence $\partial_u f(0,0)\neq0$ and $\nabla f\neq0$; by the implicit function theorem, $D_Q^\Pi$ is smooth
at each isotropic point.

\smallskip
\noindent\textit{ Local normal form at $p_{ij}=L_i\cap L_j$.}
Fix $i\neq j$ and choose affine coordinates $(u,v)$ centered at $p_{ij}$ with $L_i=\{u=0\}$ and
$L_j=\{v=0\}$. Freezing the other factors, the restriction of $H_Q$ to $\PP^2$ has the Taylor expansion
\[
f(u,v)\;=\;\alpha(u,v)\,u^2\;+\;\beta(u,v)\,v^2\;-\;\gamma(u,v)\,u^2v^2\;+\;\text{(terms of order }\ge3),
\]
where $\alpha(0,0),\beta(0,0),\gamma(0,0)\neq0$. Dividing by a unit and rescaling by local square roots, we obtain
the analytic normal form
\[
f(u,v)\;=\;u^2+v^2-u^2v^2.
\]
Then
\[
\partial_u f=2u(1-v^2),\qquad \partial_v f=2v(1-u^2),\qquad \mathrm{Hess}_{(0,0)}(f)=
\begin{pmatrix} 2 & 0\\ 0 & 2\end{pmatrix},
\]
so $\nabla f(0,0)=0$ and the quadratic part $u^2+v^2$ is nondegenerate. Hence $p_{ij}$ is an $A_1$ singularity,
i.e.\ an \emph{ordinary node}.
\smallskip
\noindent\textit{ No other singularities.}
Away from the finite set $\{p_{ij}\}_{i<j}$ we are either at an isotropic point (already smooth) or
off all $L_i$, where the restriction of $H_Q$ to the $\PP^2$-anchor is transverse (the affine quadratic form
has nonzero gradient). Thus there are no additional singular points.

\smallskip
Counting the pairwise intersections of the $m$ lines $L_i$ gives exactly $\binom{m}{2}$ nodes. Therefore
$\mathrm{Sing}(D_Q^\Pi)=\{p_{ij}\}_{i<j}$ and all singularities are ordinary double points.
\end{proof}

% === 3) Milnor fiber at a node ===
\begin{proposition}[Whitney stratification of $D_Q^\Pi$]
\label{prop:Whitney-plane}
A Whitney stratification of $D_Q^\Pi$ consists of:
\begin{enumerate}
\item the stratum of smooth points $S_{\mathrm{reg}} = D_Q^\Pi \setminus \{p_{ij}\}_{i<j}$;
\item the $0$-dimensional strata $S_{i,j} = \{p_{ij}\} = L_i \cap L_j$, for $1\le i<j\le m$.
\end{enumerate}
\end{proposition}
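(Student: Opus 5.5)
The plan is to deduce the stratification directly from Proposition~\ref{prop:sing-strata}. That result gives $\operatorname{Sing}(D_Q^\Pi)=\{p_{ij}\}_{i<j}$, a finite set of ordinary nodes. So the proposed decomposition is a partition of $D_Q^\Pi$ into locally closed smooth pieces: the regular part, of pure dimension one, and finitely many points. It remains to check the frontier condition and Whitney's conditions (a) and (b).

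\textbf{Step 1: it is a stratification.}
\begin{itemize}
\item $S_{\mathrm{reg}}$ is open in $D_Q^\Pi$, since it is the complement of a finite, hence closed, set. It is smooth by Proposition~\ref{prop:sing-strata}.
\item Each $S_{i,j}$ is a closed point, hence a smooth $0$-dimensional manifold.
\item The points $p_{ij}$ are pairwise distinct, because by genericity no three lines $L_i$ are concurrent (as in Lemma~\ref{lem:Dinfty-plane}). So the pieces are disjoint.
\item Frontier condition: the closure of $S_{\mathrm{reg}}$ is all of $D_Q^\Pi$, since a curve has no isolated points. At each node both local branches pass through $p_{ij}$. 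Hence $\overline{S_{\mathrm{reg}}}\setminus S_{\mathrm{reg}}=\bigsqcup S_{i,j}$, a union of strata. The point strata have empty frontier.
\end{itemize}

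\textbf{Step 2: Whitney regularity.} Only pairs $(S_{\mathrm{reg}},S_{i,j})$ need checking. Condition (a) is automatic here, because the tangent space of a point stratum is $0$.

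For condition (b), work in local analytic coordinates at $p_{ij}$. Proposition~\ref{prop:sing-strata} gives the normal form $u^2+v^2-u^2v^2$. After the coordinate change $u'=u\sqrt{1-v^2}$, this becomes $u'^2+v^2=(u'+\sqrt{-1}\,v)(u'-\sqrt{-1}\,v)$. So the germ of $D_Q^\Pi$ at $p_{ij}$ is biholomorphic to a union of two transverse smooth branches, each a line through the origin.

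Whitney (b) is invariant under local diffeomorphism. For a union of lines through the origin, take $x_k\to 0$ on a branch $\ell$ and $y_k=0$. Then the secant line $\overline{x_k y_k}$ is $\ell$ itself, which equals the tangent line $T_{x_k}S_{\mathrm{reg}}$. So the limiting secant lies in the limiting tangent, and (b) holds.

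A more conceptual alternative: any reduced curve with the finite singular set taken as $0$-strata is Whitney stratified. This is because a curve germ has finitely many branches with well-defined tangent cones, and (b) follows from Puiseux parametrizations.

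\textbf{Main obstacle.} The only delicate point is making sure the local branches really are smooth and transverse, which reduces the check to the conical model. This rests on the nondegenerate Hessian from Proposition~\ref{prop:sing-strata}, via the Morse lemma. I would cite the Morse lemma for the normal form and then carry out the elementary verification above. A fallback, if needed, is the general fact that curves with isolated singular points stratified this way are always Whitney regular.
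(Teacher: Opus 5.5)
Your proposal is correct and follows the same route as the paper. Both arguments deduce the stratification from Proposition~\ref{prop:sing-strata}, whose only singular points are the finitely many ordinary nodes $p_{ij}$, and note that Whitney's conditions hold at an isolated node whose tangent cone is a pair of transverse lines. Your write-up is more complete than the paper's one-line justification: you check the frontier condition, note that (a) is vacuous for a point stratum, and verify (b) explicitly on the conical model $u'^2+v^2=0$ after a biholomorphic change of coordinates.
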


\begin{proof}
The divisor $D_Q^\Pi \subset \PP^2$ is a hypersurface with isolated singularities.  
By~Prop.~\ref{prop:sing-strata}, these singularities occur precisely at the $\binom{m}{2}$ intersection points~\mbox{$p_{ij}=L_i\cap L_j$}, and each is an ordinary node.  
Thus the decomposition
\[
D_Q^\Pi = S_{\mathrm{reg}} \ \sqcup\ \bigcup_{i<j} S_{i,j}
\]
satisfies Whitney’s conditions: the smooth part is open and dense in $D_Q^\Pi$, and each singular stratum $S_{i,j}$ is a transverse isolated point where the tangent cone is a pair of intersecting planes.  
Hence this is a Whitney stratification.
\end{proof}

\begin{lemma}[Milnor fiber at $p_{ij}$]
\label{lem:Milnor-node}
The Euler characteristic of the reduced cohomology of the Milnor fiber at each singular point
$p_{ij}$ of $D_Q^\Pi$ is $-1$.
\end{lemma}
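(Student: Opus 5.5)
By Proposition~\ref{prop:sing-strata}, every singular point $p_{ij}$ of $D_Q^\Pi$ is an ordinary node. The Euler characteristic of the reduced cohomology of the Milnor fiber is a local analytic invariant. So it is enough to compute it for a single analytic normal form of an $A_1$ singularity in two variables. The plan is to reduce to the model $g(u,v)=uv$ (equivalently $u^2+v^2$ after the linear change $u\pm \sqrt{-1}\,v$), identify its Milnor fiber explicitly, and read off the reduced Euler characteristic.

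\textbf{Step 1: reduction to the model.} By the proof of Proposition~\ref{prop:sing-strata}, a local equation of $D_Q^\Pi$ at $p_{ij}$ is analytically equivalent to $f=u^2+v^2-u^2v^2$. Its quadratic part is nondegenerate, so the holomorphic Morse lemma gives local analytic coordinates in which $f=u^2+v^2$. The Milnor fiber $F_{p_{ij}}$ is well defined up to homotopy, independently of the choice of local equation and coordinates (as recalled before Theorem~\ref{thm: correction of euler charachteristic of singular divisor}). Hence we may take $f=u^2+v^2$, and then, with $s=u+\sqrt{-1}v$ and $w=u-\sqrt{-1}v$, take $f=sw$.

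\textbf{Step 2: the Milnor fiber.} For small $t\neq 0$, the fiber is $\{sw=t\}$, which is isomorphic to $\CC^*$ via $s\mapsto (s,t/s)$. Intersecting with a small ball $B_\varepsilon$ leaves an annulus $\{|t|/\varepsilon' < |s| < \varepsilon'\}$, up to a harmless change of the ball shape (polydisc versus round ball, which Milnor's theory allows). This annulus deformation retracts onto a circle. Therefore $F_{p_{ij}}\simeq S^1$ and $\chi(F_{p_{ij}})=0$.

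Alternatively, Milnor's theorem says the fiber of an isolated singularity in $\CC^2$ is a bouquet of $\mu$ circles. Here the Milnor number is $\mu=\dim\CC\{u,v\}/(\partial_u f,\partial_v f)=\dim \CC\{u,v\}/(u,v)=1$.

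\textbf{Step 3: reduced Euler characteristic.} Reduced cohomology differs from ordinary cohomology only in degree $0$, so $\tilde\chi(F)=\chi(F)-1=0-1=-1$. Equivalently, $\tilde\chi(F)=(-1)^{n-1}\mu$ with $n=2$ and $\mu=1$, which gives $\mu_{S_{i,j}}=-1$ for every $i<j$.

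The only delicate point is Step 1. One must justify that the Milnor fiber computed from the normal form agrees with the one for the actual restriction of $H_Q$ to $\mathcal M^\Pi_{\mathcal C}\cong\PP^2$. I would handle this by citing the invariance of the Milnor fibration under local analytic coordinate changes and multiplication by units. I would also note that the nondegenerate Hessian established in Proposition~\ref{prop:sing-strata} is all the Morse lemma needs.
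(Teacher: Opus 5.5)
Your proof is correct, but it reaches $\chi(F)=0$ by a different route than the paper.

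\textbf{The paper's route.} The paper keeps the specific local equation $f=u^2+v^2-u^2v^2$. It computes $\chi(F_\varepsilon)$ through the branched cover $(u,v)\mapsto(u^2,v^2)$, stratifying $G_t=\{x+y-xy=t\}$ by the branch locus $\{xy=0\}$. Multiplicativity of $\chi$ over the unbranched part then gives $4(1-2)+2+2=0$. Only afterwards does it remark that $F_\varepsilon\simeq\CC^*$.

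\textbf{Your route.} You apply the holomorphic Morse lemma to pass to $sw$, where the fiber is visibly an annulus. Alternatively you use $\mu=1$ together with Milnor's bouquet theorem.

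\textbf{Comparison.}
\begin{itemize}
\item Your argument gives the homotopy type directly. This yields the full statement $\widetilde H^1(F;\mathbb{Q})\cong\mathbb{Q}$, which the paper only asserts.
\item Your argument uses only the nondegeneracy of the Hessian, so it does not depend on the higher-order terms of the local equation. This is a real advantage: the $-u^2v^2$ term in the paper's ``normal form'' has order four and plays no intrinsic role. (Your hedge about ball shape is only needed for the nonlinear Morse change of coordinates, which invariance of the Milnor fibration handles. The linear map $(u,v)\mapsto(u+\sqrt{-1}\,v,\,u-\sqrt{-1}\,v)$ is $\sqrt2$ times a unitary, so it sends round balls to round balls.)
\item The paper's computation is more elementary. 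It needs no Morse lemma and no Milnor theory, only additivity of $\chi$ and its behaviour under finite unramified covers.
\item Since $\tilde\chi=\chi-1$ for any nonempty space, the paper's $\chi(F_\varepsilon)=0$ already proves the lemma as stated.
\end{itemize}
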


\begin{proof}
Near $p_{ij}=L_i\cap L_j$, we choose local affine coordinates $(u,v)$ on $\PP^2$ such that
$L_i=\{u=0\}$ and $L_j=\{v=0\}$.  As shown in Proposition~\ref{prop:sing-strata},
the local equation of $D_Q^\Pi$ is, up to analytic equivalence,
\[
f(u,v)=u^2+v^2-u^2v^2=0.
\]
We compute the Milnor fiber of this singularity.

Let $F_\varepsilon=\{f(u,v)=\varepsilon\}\cap B_\delta$ for small $0<|\varepsilon|\ll\delta$.  
Set $x=u^2$ and $y=v^2$; then the defining equation becomes
\[
x+y-xy=\varepsilon,
\]
and we denote $G_t=\{x+y-xy=t\}\cap B_\epsilon$ for small $t$.  
The map
\[
\psi:(u,v)\longmapsto(x,y)=(u^2,v^2)
\]
is $4$-to-$1$ on $\{xy\neq0\}$ and $2$-to-$1$ along $\{x=0\}$ and $\{y=0\}$.  
Hence
\[
\chi(F_\varepsilon)
=4\,\chi(G_t\cap\{xy\neq0\})
 +2\,\chi(G_t\cap\{x=0\})
 +2\,\chi(G_t\cap\{y=0\}).
\]
The gradient of $x+y-xy$ is $(1-y,1-x)$, which does not vanish near the origin,
so~$G_t$ is smooth. Furthermore, \(G_t\cap\{x=0\}\) and \(G_t\cap\{y=0\}\) are single points, while
\(G_t\cap\{xy\neq 0\}\) is a disk with these two points removed, so
\[
\chi(G_t\cap\{xy\neq 0\})=1-2.
\]
Substituting gives
\[
\chi(F_\varepsilon)=4(1-2)+2(1)+2(1)=0.
\]
Since $F_{\varepsilon}$ is homotopy equivalent to $\mathbb C^{*}$, its unreduced Euler characteristic is~$0$, and thus the reduced Euler characteristic is $\tilde\chi(F_\varepsilon)=-1$.
Equivalently,~\mbox{$\widetilde H^1(F_\varepsilon;\mathbb Q)\cong\mathbb Q$} and all other reduced cohomology groups vanish.  
Hence each singular point $p_{ij}$ of $D_Q^\Pi$ has Milnor number $\mu=1$ and reduced Euler characteristic $-1$.
\end{proof}

% === 4) Euler characteristic of D_Q^\Pi ===
\begin{proposition}
\label{prop:chi-DQ}
The Euler characteristic of $D_Q^\Pi$ is
\[
\chi(D_Q^\Pi)
=-4m^2+6m+\binom{m}{2}.
\]
\end{proposition}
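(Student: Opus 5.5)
The plan is to apply Theorem~\ref{thm: correction of euler charachteristic of singular divisor} on $V=\mathcal M^\Pi_{\mathcal C}\cong\PP^2$ with $U=D_Q^\Pi$ and $W=D^{\Pi'}$ a generic member of the linear system $\Gamma(\mathcal M^\Pi_{\mathcal C},\mathcal O(D_Q^\Pi))$. The Bertini lemma above makes $W$ smooth, and Proposition~\ref{prop: generic divisor} gives $\chi(W)=-4m^2+6m$. It then remains to evaluate the correction sum over the Whitney strata of Proposition~\ref{prop:Whitney-plane}.

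\textbf{Transversality.} The linear system of degree-$2m$ plane curves is basepoint-free and very ample. A generic member therefore misses the finitely many points $p_{ij}$, and it meets the one-dimensional stratum $S_{\mathrm{reg}}$ transversally in finitely many points. I will justify this by a standard dimension count: curves through a given point, or tangent to $S_{\mathrm{reg}}$ at a point, form proper closed subfamilies. This is the step where genericity is used, and it is routine.

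\textbf{The correction sum.} On $S_{\mathrm{reg}}$ the Milnor fiber of a smooth point is contractible, so its reduced Euler characteristic is $\mu_{S_{\mathrm{reg}}}=0$ and that stratum contributes nothing. For each point stratum $S_{i,j}=\{p_{ij}\}$ we have $p_{ij}\notin W$, so $\chi(S_{i,j}\setminus W)=1$. Lemma~\ref{lem:Milnor-node} gives $\mu_{S_{i,j}}=-1$. Summing over the $\binom m2$ nodes counted in Proposition~\ref{prop:sing-strata} yields
\[
\chi(W)-\chi(D_Q^\Pi)=\sum_{i<j}(-1)\cdot 1=-\binom m2 ,
\]
and hence $\chi(D_Q^\Pi)=-4m^2+6m+\binom m2$.

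\textbf{Cross-check.} As a sanity check independent of the stratification machinery, I would compare with the classical formula for a nodal plane curve. Each node lowers the Euler characteristic of the normalization by one relative to the curve, that is, $\chi(\text{curve})=\chi(\text{normalization})-\#\text{nodes}$. Combined with the genus drop $g=\tfrac{(d-1)(d-2)}{2}-\delta$, this gives $\chi=2-2g+\delta$ with $d=2m$ and $\delta=\binom m2$. This reproduces the same value.

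\textbf{Main obstacle.} The delicate point is not this computation. It is the input that $D_Q^\Pi$ is reduced with exactly $\binom m2$ ordinary nodes and no further singularities, so that $\mu$ takes only the values $0$ and $-1$. I take this from Proposition~\ref{prop:sing-strata} as given.
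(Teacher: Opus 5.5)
Your proposal is correct and follows the paper's proof. Both apply the Parusi\'nski--Pragacz correction formula with $W=D^{\Pi'}$, take $\mu=0$ on $S_{\mathrm{reg}}$ and $\mu=-1$ at each of the $\binom m2$ nodes, and conclude $\chi(D_Q^\Pi)=\chi(D^{\Pi'})+\binom m2$; you also make explicit the transversality of $D^{\Pi'}$ and the equality $\chi(S_{i,j}\setminus D^{\Pi'})=1$, which the paper uses tacitly.

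One small slip in your cross-check: if $g=\frac{(d-1)(d-2)}{2}-\delta$ is the geometric genus, then $\chi=2-2g-\delta$. This equals $2-2p_a+\delta$ with $p_a=\frac{(d-1)(d-2)}{2}$, and it is this form that reproduces the value; it also holds when the curve is reducible.
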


\begin{proof}
By Theorem~\ref{thm: correction of euler charachteristic of singular divisor} and Proposition~\ref{prop:Whitney-plane}, applied to the
Whitney stratification~\(
D_Q^\Pi=S_{\mathrm{reg}}\ \sqcup\ \bigsqcup_{i<j}S_{i,j},
\)
we have
\begin{equation}\label{eq:chi-diff}
\chi(D^{\Pi'})-\chi(D_Q^\Pi)
=\mu_0\,\chi(S_{\mathrm{reg}}\setminus D^{\Pi'})
+\sum_{i<j}\mu_{i,j}\,\chi(S_{i,j}\setminus D^{\Pi'}),
\end{equation}
where $\mu_0$ and $\mu_{i,j}$ denote the Euler characteristics of the reduced
cohomology of the Milnor fibers along the corresponding strata.

For the smooth stratum $S_{\mathrm{reg}}$, the divisor $D_Q^\Pi$ is nonsingular,
so $\mu_0=0$.  
By Lemma~\ref{lem:Milnor-node}, each singular stratum $S_{i,j}=\{p_{ij}\}$
is an ordinary node, so its Milnor fiber has reduced Euler characteristic $-1$.
Moreover, Proposition~\ref{prop:sing-strata} shows that there are exactly $\binom{m}{2}$
such points.  Substituting these values into~\eqref{eq:chi-diff} gives
\[
\chi(D^{\Pi'})-\chi(D_Q^\Pi)
=\sum_{i<j}\mu_{i,j}\chi(S_{i,j}\setminus D^{\Pi'})
=-\binom{m}{2}.
\]
Hence
\[
\chi(D_Q^\Pi)=\chi(D^{\Pi'})+\binom{m}{2}.
\]
By Proposition~\ref{prop: generic divisor},$
\chi(D^{\Pi'})=-4m^2+6m,$ 
and substituting yields
\[
\chi(D_Q^\Pi)
=-4m^2+6m+\binom{m}{2}.
\]
\end{proof}

Now we conclude

\begin{proof}[Proof of Thorem~\ref{maintheorem}/Theorem~\ref{EDdeg_plane-thm}]
Let us recall that by the ED--$\chi$ relation and projective additivity, we got \eqref{eq:EDchi}
\[
\mathrm{EDdeg}(M^\Pi_{\mathcal{C}})
=\chi(\mathcal{M}^\Pi_{\mathcal{C}})
-\chi(D_\infty^\Pi)
+\chi(D_Q^\Pi\cap D_\infty^\Pi)
-\chi(D_Q^\Pi).
\]
Each term can be computed as follows.

\smallskip
\noindent
(1) From Lemma~\ref{eu: plane-anchor MV}, the anchored--plane multiview variety is
$\mathcal{M}^\Pi_{\mathcal{C}}\cong\PP^2$, hence
\[
\chi(\mathcal{M}^\Pi_{\mathcal{C}})=3.
\]

\smallskip
\noindent
(2) By Lemma~\ref{lem:Dinfty-plane}, the divisor
$D_\infty^\Pi=\bigcup_{i=1}^m L_i$
is a union of $m$ lines in general position on $\PP^2$, giving
\[
\chi(D_\infty^\Pi)
=\sum_{i=1}^m \chi(\PP^1)-\sum_{i<j}\chi(\mathrm{pt})
=2m-\binom{m}{2}.
\]

\smallskip
\noindent
(3) By Lemma~\ref{lem:DQ-infty-plane}, the intersection
$D_Q^\Pi\cap D_\infty^\Pi$ consists of the $2m$ isotropic points
on the $L_i$’s together with all pairwise intersection points,
so
\[
\chi(D_Q^\Pi\cap D_\infty^\Pi)=2m+\binom{m}{2}.
\]

\smallskip
\noindent
(4) Finally, by Proposition~\ref{prop:chi-DQ},
the divisor $D_Q^\Pi$ has Euler characteristic
\[
\chi(D_Q^\Pi)
=-4m^2+6m+\binom{m}{2}.
\]

\smallskip
Substituting all values into Equation (1) gives
\[
\begin{aligned}
\mathrm{EDdeg}(M^\Pi_{\mathcal{C}})
&=3-\bigl(2m-\tbinom{m}{2}\bigr)
+\bigl(2m+\tbinom{m}{2}\bigr)
-\bigl(-4m^2+6m+\tbinom{m}{2}\bigr)\\
&=3+m(m-1)+4m^2-6m-\tfrac{m(m-1)}{2}\\
&=\tfrac{9}{2}m^2-\tfrac{13}{2}m+3.
\end{aligned}
\]
This completes the proof.
\end{proof}

\section{Experiments on real data}
\begin{figure}
    \centering
    \begin{tabular}{c c c}
       \includegraphics[width=0.25\linewidth]{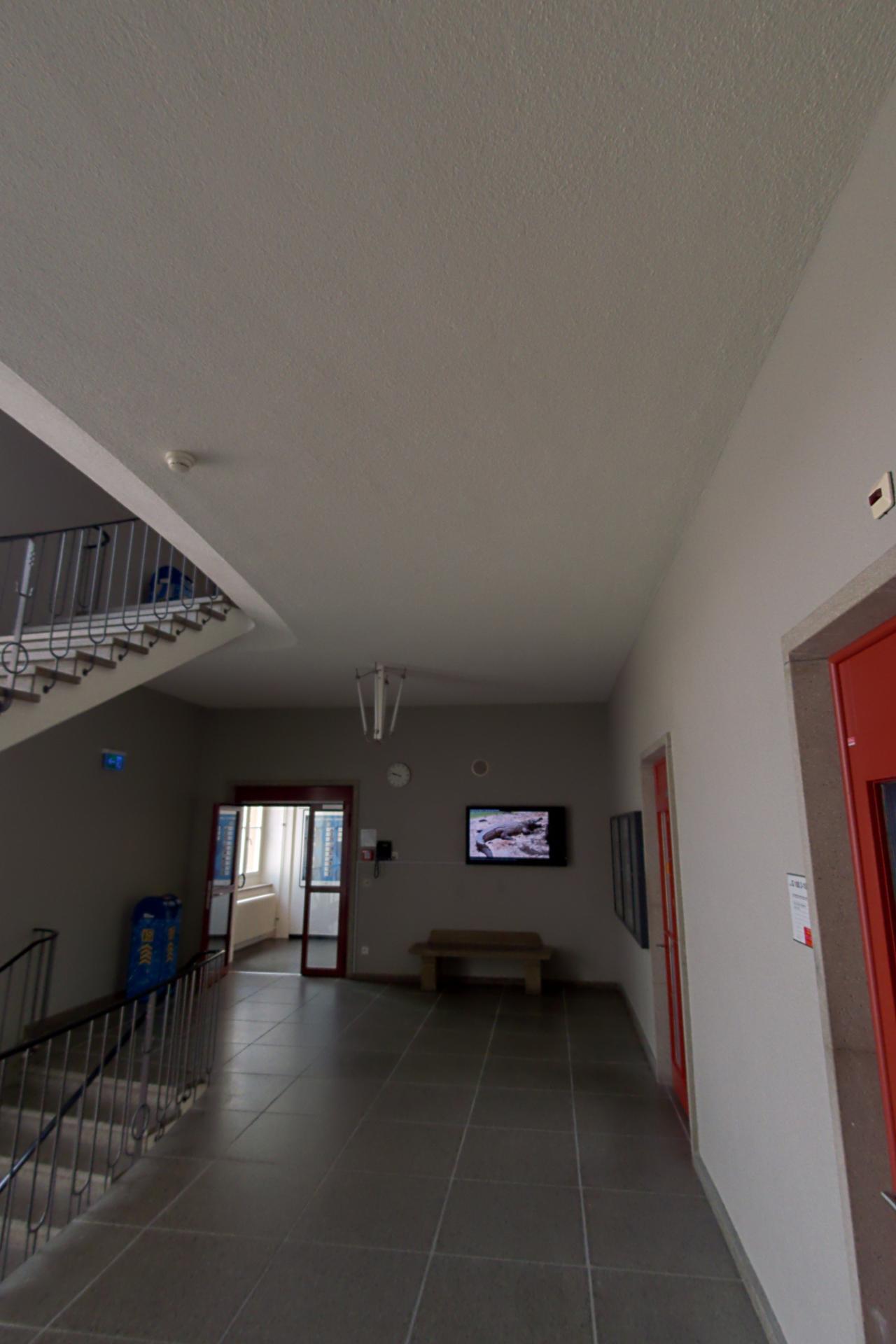} &
       \includegraphics[width=0.25\linewidth]{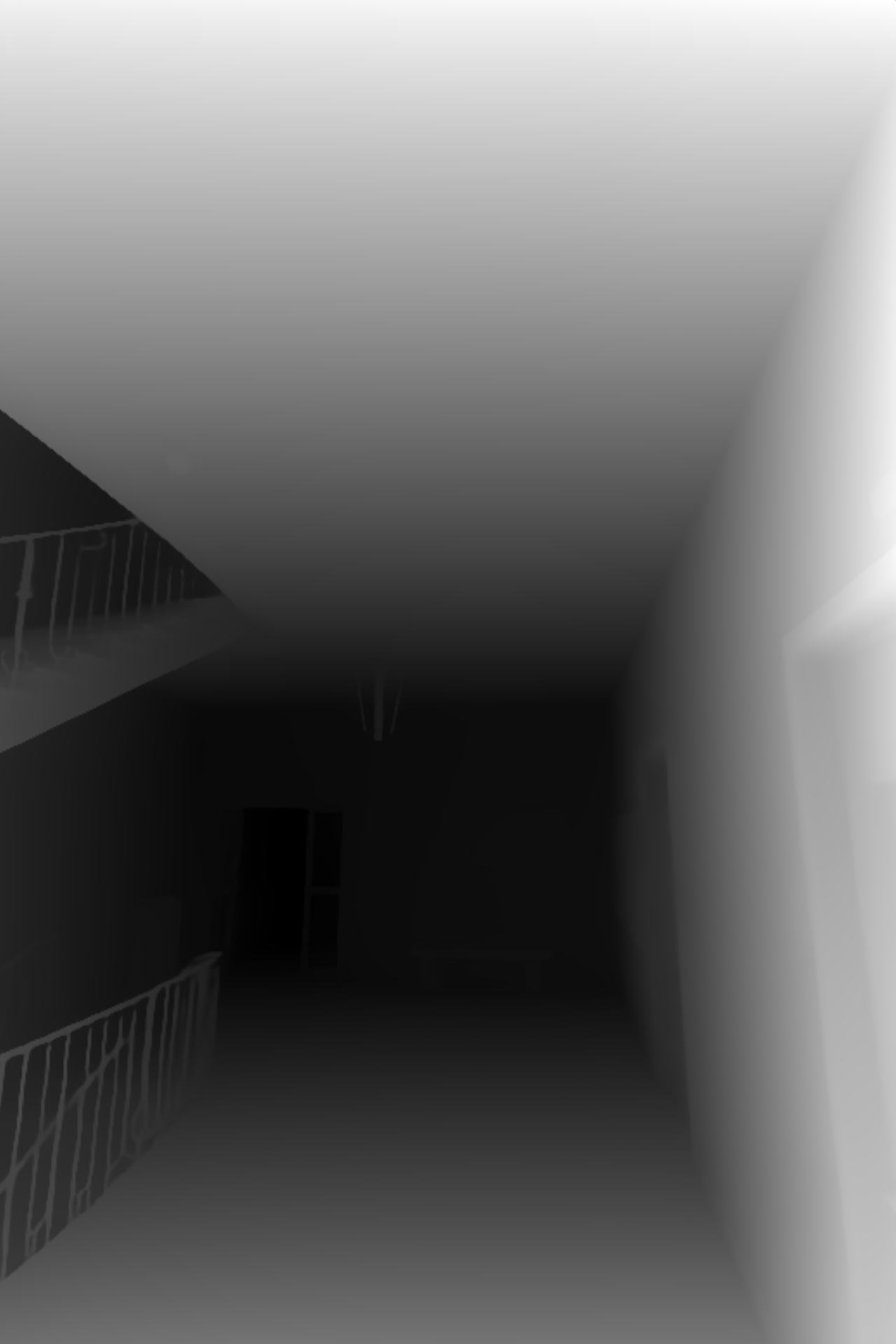} &
       \includegraphics[width=0.25\linewidth]{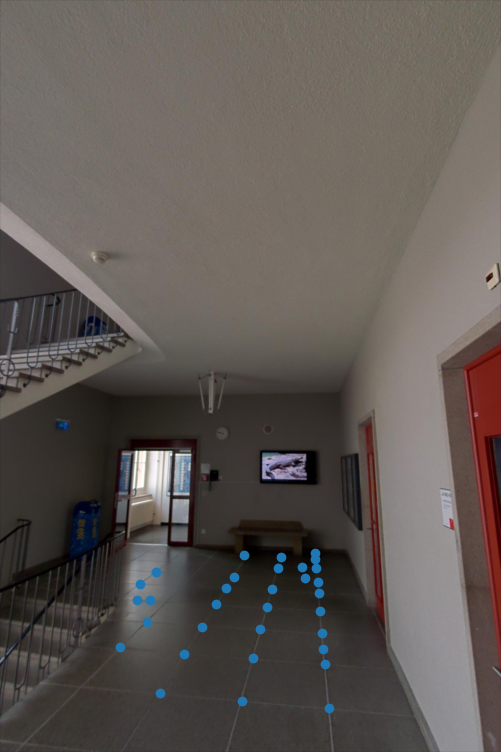}
    \end{tabular}

    \caption{An example of an image from the CAB sequence of the Lamar dataset~\cite{sarlin2022lamar} captured with the NavVis M6 trolley. \textit{left}: the image, \textit{center}: depth detected with a neural network~\cite{depth_anything_v2}, \textit{right}: a plane detected in the image. }
    \label{fig:image_example}
\end{figure}

Here, we present details about the real-world experiments, described in Section~\ref{sec:real} in the main paper. We reuse the notation from that section.

In Algorithm~\ref{alg:detection}, we summarize the process of plane detection used in our experiments. The process is also illustrated in Figure~\ref{fig:image_example}: the first image depicts an image from the Lamar dataset \cite{sarlin2022lamar} used in the experiments, and the subsequent images illustrate the process of plane detection: we show the depth detected using~\cite{depth_anything_v2} used for the plane detection, and the detected plane. The whole experimental pipeline is presented in Algorithm~\ref{alg:experiment}.

\begin{algorithm}[H]
\caption{Plane detection}
\label{alg:detection}
\textbf{Input:} Images $\mathcal{I}_1,\ldots,\mathcal{I}_m$, Camera poses $\mathcal{C} = (C_1,\ldots,C_m)$, correspondences $ \mathcal{U} = x_1,\dots,x_n  \subset (\PP^2)^m $ \\
\textbf{Output:} Detected planes $\Pi_i \in (\PP^3)^*$, and correspondences $\mathcal{U}_i^*$ assigned to them for $i \in \{1,\ldots,num\_planes\}$
\begin{algorithmic}[1]
\STATE $\mathcal{D}_m := $ monocular depth of $\mathcal{I}_m$
\STATE $\mathcal{X} := $ backproject points $x_1,\dots,x_n$ into 3D using $\mathcal{D}_m$
\FOR{$j = 1$ to num\_planes}
    \STATE $\mathcal{X}_j$ := a subset of $\mathcal{X}$ lying on a plane discovered using RANSAC~\cite{ransac}
    \STATE $\mathcal{U}_j$ := correspondences, whose backprojected points lie in $\mathcal{X}_j$
    \STATE $\mathcal{U}_j^2$ := correspondences from $\mathcal{U}_j$ with Sampson error $< 2px$ w.r.t. $\mathcal{C}$
    \STATE $\mathcal{X}_i^2$ := triangulate $\mathcal{U}_j^2$ by fitting into $\mathcal{M}_\mathcal{C}$ (using (P).UC)
    \STATE $\Pi_j$ := fit a plane into $\mathcal{X}_j^2$ using RANSAC~\cite{ransac}
    \STATE $\mathcal{X} := \mathcal{X} \setminus \mathcal{X}_j$
\ENDFOR
\RETURN $\{(\Pi_j, \ \mathcal{U}_j) \ \vert \ j \in \{1,\ldots,num\_planes\}\}$
\end{algorithmic}
\end{algorithm}
\vspace{1em}
\begin{algorithm}[H]
\caption{Plane-constrained triangulation (Experimental pipeline)}
\label{alg:experiment}
\textbf{Input:} Images $\mathcal{I}_{all} = \mathcal{I}_1,\ldots,\mathcal{I}_M$, Camera poses $\mathcal{C}_{all} = (C_1,\ldots,C_M)$, correspondences $\mathcal{U}_{all}$, (Ground truth 3D points $\mathcal{X}_{GT}$) \\
\textbf{Output:} Errors $\mathcal{E}_{(P).UC}$, $\mathcal{E}_{(P).C}$, $\mathcal{E}_{(P).H}$ for each approach
\begin{algorithmic}[1]
\STATE  $\mathcal{E}_{(P).C} := \emptyset, \mathcal{E}_{(P).UC} := \emptyset, \mathcal{E}_{(P).H} := \emptyset$
\FOR{$i=1$ to $M-m+1$}
    \STATE $\mathcal{C} := (C_i,\ldots,C_{i+m-1})$
    \STATE $\mathcal{U}$ := collect point correspondences from $\mathcal{U}_{all}$ observed by cameras $\mathcal{C}$
    \STATE $\{(\Pi_j, \ \mathcal{U}_j) \ \vert \ j \in \{1,\ldots,num\_planes\}\}$ := detect planes using Algorithm~\ref{alg:detection}
    \FOR{$j=1$ to num\_planes}
    \STATE $X_{(P).C}$ := triangulate $\mathcal{U}_j$ by fitting into $\mathcal{M}_{\mathcal{C}}^{\Pi_j}$
    \STATE $X_{(P).UC}$ := triangulate $\mathcal{U}_j$ by fitting into $\mathcal{M}_{\mathcal{C}}$
    \STATE $X_{(P).H}$ := (Reprojection error of $X_{(P).C} < 5px$) ? $X_{(P).C}$ : $X_{(P).UC}$
    \STATE $\epsilon_{(P).C}, \epsilon_{(P).UC, \epsilon_{(P).H}}$ := triangulation errors of $X_{(P).C}, X_{(P).UC}, X_{(P).H}$
    \STATE Add $\epsilon_{(P).C}, \epsilon_{(P).UC}, \epsilon_{(P).H}$ into the corresponding $\mathcal{E}_{(P).C}$, $\mathcal{E}_{(P).UC}$, $\mathcal{E}_{(P).H}$
    \ENDFOR
\ENDFOR
\end{algorithmic}
\end{algorithm}

\printbibliography

\end{document}